\numberwithin{equation}{section}
\renewcommand{\fnum@figure}{Fig. \thefigure}
\DeclareRobustCommand{\rchi}{{\mathpalette\irchi\relax}}
\newcommand{\irchi}[2]{\raisebox{\depth}{$#1\chi$}} 
\DeclareMathOperator*{\esssup}{ess\,sup}
\def\NAT@def@citea{\def\@citea{\NAT@separator}}
\theoremstyle{plain}
\newtheorem{theorem}{Theorem}[section]
\newtheorem{lemma}[theorem]{Lemma}
\newtheorem{proposition}[theorem]{Proposition}
\theoremstyle{definition}
\newtheorem{definition}[theorem]{Definition}
\newtheorem{remark}[theorem]{Remark}
\newcommand{\R}{\mathbb{R}}
\newcommand{\Z}{\mathbb{Z}}
\newcommand{\N}{\mathbb{N}}
\newcommand{\Oe}{\Omega_\epsilon}
\xpatchcmd{\proof}{\itshape}{\normalfont\proofnamefont}{}{}
\newcommand{\proofnamefont}{}
\renewcommand{\proofnamefont}{\bfseries}
\newcommand\shorttitle{Homogenization of Poisson--Nernst--Planck equations}
\newcommand\authors{A. Bhattacharya}
	\ifodd\value{page}
\authors
\shorttitle
\begin{document}

	\title{\Large Homogenization of Poisson--Nernst--Planck equations for multiple species in a porous medium}
	
	\author{Apratim Bhattacharya}

	\date{\small \vspace{.7cm}Department of Mathematics and Mathematical Statistics, Umeå University,\\
 MIT-building, floor 3, 901 87 Umeå, Sweden.\\ 
 Email: apr.bhattacharya@gmail.com}
	\maketitle
	
	\begin{abstract}
We rigorously derive a homogenized model for the Poisson--Nernst--Planck (PNP) equations for the case of multiple species defined on a periodic porous medium in spatial dimensions two and three. This extends the previous homogenization results for the PNP equations concerning two species. Here, the main difficulty is that the microscopic concentrations remain uniformly bounded in a space with relatively weak regularity. Therefore, the standard Aubin-Lions-Simon type compactness results for porous media, which give strong convergence of the microscopic solutions, become inapplicable in our weak setting. We overcome this problem by constructing suitable cut-off functions. The cut-off function, together with the application of a previously known energy functional, yields strong convergence of the microscopic concentrations in $L^1_t L^r_x$, for some $r>2$, enabling us to pass to the limit in the nonlinear drift term. Finally, we derive the homogenized equations by means of two-scale convergence in $L^p_t L^q_x$ setting. 
	\end{abstract}
	
	\noindent\textbf{Keywords:} Poisson--Nernst--Planck equations; elliptic-parabolic system; electro-diffusion; multiple charged species; porous media; homogenization; two-scale convergence.
	
	\noindent\textbf{2020 Mathematics Subject Classification:} 35B27, 35B40, 35Q92, 78A35, 35Q81.

	\section{Introduction}
	
The Poisson--Nernst--Planck (PNP) equations form a coupled parabolic-elliptic system, which models the transport of charged particles under the influence of diffusion and electric force. The goal of this paper is the rigorous homogenization of the PNP equations for multiple species in a periodic porous medium in spatial dimensions two and three. 

\subsection{Motivation} The PNP system is extensively applied to model electro-diffusion phenomena \cite{Soe10, Baz04, Wan14, War20}. Furthermore, PNP type equations are widely used in the mathematical modeling of semiconductors (see \cite[Chapter 3]{Mar90}, \cite[Chapter 2]{Sel84}, \cite{Gaj86}, \cite{Doa19}), where the case of two species is relevant (the electrons with charge number $-1$ and the holes with charge number $1$). There are also mathematical studies where the PNP equations were generalized suitably in order to incorporate specific complex physics (see \cite{Bur12,Rou07}).

In this paper, we are interested in the applications of the PNP system in a medium with microscopic heterogeneities. In biology, examples of such phenomena include ion transportation in biological tissues, such as ion transfer through ion channels of the cell membrane \cite[Chapter 3]{Kee09} and application to neuronal signal propagation \cite{Pod13} as well as understanding disease characteristics \cite{Bor12}. In geology, these equations on complex media appear when modeling electro-kinetic flow through porous rocks \cite{Ali19} and, in engineering sciences, when modeling electro-osmosis in porous media \cite{Wan07}. 

In practice, when dealing with such processes with microscopic complexities, one often seeks a global or effective behaviour. This is where homogenization theory is utilized. More precisely, in this paper, we perform a rigorous homogenization of (\ref{non_dim_PNP}) on the setting of a periodic porous domain, and this homogenized model provides a macroscopic approximation of the original microscopic process. Furthermore, a homogenized model is defined on a domain which is free of micro-structures. Hence, it is far more suitable for numerical computations. Also, the problem (\ref{non_dim_PNP}) is intriguing from the point of view of multiscale analysis and mathematical homogenization due to the nonlinear couplings via the drift terms. 

\subsection{The mathematical model} We study the following system of equations for a number of $P \in \N$ species with concentrations $c_{i, \epsilon}, 1 \leq i \leq P$, and the electric potential $\phi_\epsilon$ in a periodically perforated domain $\Omega_\epsilon$, representing the fluid (pore) phase of a porous medium (see Fig. \ref{fig_domain_PNP}):

	\begin{subequations}\label{non_dim_PNP}
		\begin{align}
			\partial_{t} c_{i,\epsilon}(t,x) +\nabla \cdot J_{i,\epsilon} (t,x) &=0  && \mbox{ in }  (0,T) \times \Omega_\epsilon, \label{non_dim_NP_eq}
			\\
			J_{i,\epsilon} (t,x) \cdot \nu_\epsilon &=0   && \mbox{ on }  (0,T) \times \partial \Omega_\epsilon, \label{non_dim_bc_1}
			\\
			c_{i,\epsilon}(0,x) &= c^{0}_{i} (x)  && \mbox{ in } \Oe, \label{non_dim_ic}
			\\
			-   \Delta \phi_\epsilon (t,x) &=  \sum_{i=1}^{P}  z_i c_{i,\epsilon}  (t,x)   && \mbox{ in }  (0,T) \times \Omega_\epsilon, \label{non_dim_poisson_eq}
			\\
			\nabla \phi_\epsilon (t,x) \cdot \nu_\epsilon &  = \xi_\epsilon (x)    && \mbox{ on } (0,T) \times \partial \Omega_\epsilon \label{non_dim_bc_2},
		\end{align}
	\end{subequations}
	where the total flux $J_{i,\epsilon}$ of the $i$-th species is given by
	\begin{equation*}\label{total_flux}
		J_{i,\epsilon}(t, x) = - \left(  D_{i} (t,x) \nabla c_{i,\epsilon} (t,x)+   D_{i} (t,x) z_i c_{i,\epsilon} (t,x) \nabla \phi_\epsilon (t,x) \right).
	\end{equation*}
	Here $(0,T)$ is the time interval, whereas $z_i \in \Z$ and $D_i $ denote the charge number and diffusivity of the $i$-th species, respectively. The microscopic scale parameter $\epsilon >0$ describes the length of the period of the porous microstructure and it is proportional to the radius of the pores. $\nu_\epsilon$ represents the outward unit normal vector to the boundary $\partial \Omega_\epsilon$. 
		\begin{figure}%
		\centering
		{{\includegraphics[width=6.5cm]{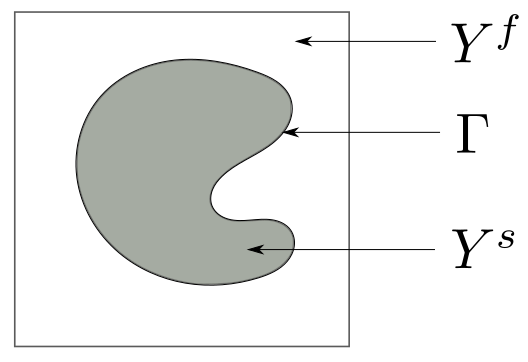} }}%
		\qquad 
		{{\includegraphics[width=7cm]{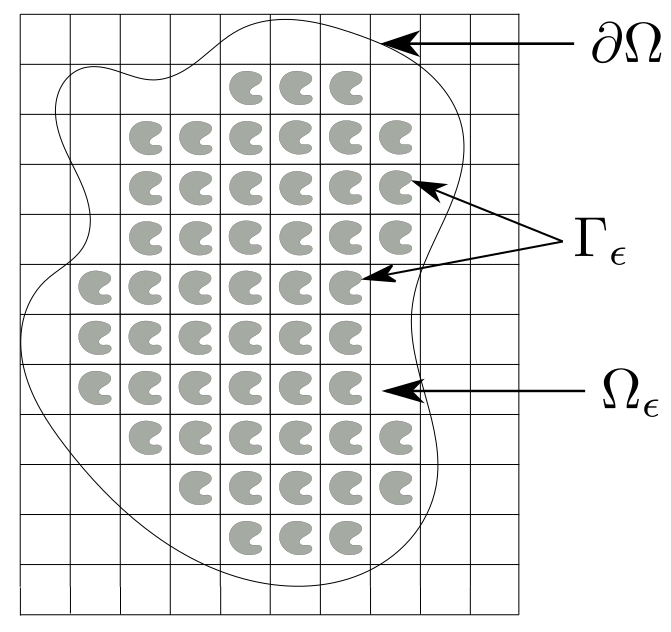} }}%
		\caption{The standard cell $Y= Y^f \cup \overline{Y^s}$ (left) and the porous medium $\Omega$ with the fluid (pore) part $\Omega_\epsilon$  (right). \label{fig_domain_PNP}}
	\end{figure}
	
Equations (\ref{non_dim_PNP}) model the transport of the charged species due to diffusion and electric force in a domain with an impermeable boundary, modeled by the no-flux boundary condition (\ref{non_dim_bc_1}). The initial concentrations are given in (\ref{non_dim_ic}). The electric potential is induced by the charges of the species and is given as the solution of the Poisson's equation (\ref{non_dim_poisson_eq}) subject to the non-homogeneous Neumann boundary condition (\ref{non_dim_bc_2}). The right-hand side in \eqref{non_dim_poisson_eq} represents the charge density within the pore phase of the porous medium, while $\xi_\epsilon$ in (\ref{non_dim_bc_2}) models a charged boundary. If we consider, e.g., the case of ion channels located within the cell membranes, it is known that there is a charge distribution on the boundary of the channel's pore. This boundary charge, which in our model is described by $\xi_\epsilon$, has a significant role in the properties of the channel, such as selectivity \cite{Lea97, Doy98}. Let us mention that consideration of other kinds of boundary conditions will also be interesting from the point of view of analysis and applications.

The aim of this paper is to study the asymptotic behaviour of the microscopic solutions $(c_{i,\epsilon}, \phi_{\epsilon})$ as $\epsilon \to 0$.

\subsection{Previous contributions}
Rigorous homogenization results have been obtained for the PNP system coupled to the Stokes equation for the case of two charged species with opposite charge numbers, e.g., in \cite{Sch11, Ray12}. Furthermore, in \cite{All10}, a stationary and linearized Stokes-PNP system for multiple species was homogenized. Corrector results related to the model from \cite{Ray12} can be found in \cite{Kho19}. Homogenization of an approximate version of the PNP system having relatively regular solutions is done in \cite{Bha22}. We denote this approximate system by app-PNP (see Subsection \ref{app-PNP_def} for its definition). Later we will see that this system plays a central role in our paper. On the other hand, in \cite{Kov21} a generalized PNP problem for multiple species in a two-phase medium, with transmission conditions at the microscopic interface, has been homogenized. However, the solutions in \cite{Kov21} enjoy the total mass balance property, which is not satisfied by the classical PNP system (\ref{non_dim_PNP}), making our analysis significantly more challenging. The rigorous homogenization of the classical PNP equations for the case of multiple species is not available in the literature so far, and, to our knowledge, our paper is the first one to present such a result. This extends the previous homogenization result for the classical PNP equations with two species. 

\subsection{Mathematical challenge}
It has been noted that the multiple-species case causes more difficulty in the analysis compared to the two-species case (for a discussion on this, see \cite[Introduction and main results]{Both14} and \cite[Introduction]{Jou23}). More precisely, when dealing with two species with opposite charge numbers, certain terms can be shown to have a sign, which then helps in obtaining estimates (see \cite[proof of Theorem 3.4]{Ray12}). Furthermore, generally speaking, the existence of solutions for the PNP system with multiple species has been obtained in weaker spaces compared to the two-species case (compare \cite{Gaj85, Her12}, which study the two-species case, to \cite{Both14, Wu15} dealing with the multiple-species case). Comments on the low regularity of solutions to our model can be found in Remark \ref{remark_low_reg}. 

In homogenization theory, one needs uniform estimates for the microscopic solutions $(c_{i,\epsilon}, \phi_\epsilon)$ with respect to the microscopic scale parameter $\epsilon$. These uniform estimates then help pass to the limit in the microscopic model (\ref{non_dim_PNP}) as $\epsilon \to 0$ and obtain the homogenized equations. Moreover, we need strong convergence of the concentrations $c_{i, \epsilon}$ in order to pass to the limit in the nonlinear drift term in (\ref{non_dim_NP_eq}). In this regard, let us mention that the application of the Aubin-Lions-Simon compactness lemma \cite{Sim87}, which gives strong convergence, to the setting of porous media is not straightforward. One of the major difficulties is the fact that one has no information on the time derivative of the extension of the solution (see \cite[p. 1830]{Gah16} for more details; see Lemma \ref{lemma_extension} for details on the extension operator). In \cite{Mei11}, \cite{Gah16}, strong convergence of the extensions of solutions has been obtained by means of fine analyses which suitably utilizes the Aubin-Lions-Simon compactness lemma \cite{Sim87} in the setting of porous media. However, lack of regularity of our solutions makes the arguments from \cite{Mei11, Gah16} inapplicable to our setting. In the following, let us discuss this issue in detail.

In \cite{Mei11} strong convergence of the extensions of solutions is proved in $L^2_tL^2_x$ under the assumption that the solutions are bounded in $L^2_tH^1_x$ and their time derivatives are bounded in $L^2_t\left(H^1_x\right)^\prime$. This result was extended in \cite{Hoe16}, where strong convergence was obtained in $L^p_tL^p_x$, provided that the solutions and their time derivatives are bounded in $L^{p}_t W^{1,p}_x$ and $L^{q}_t\left(W^{1,p}_x\right)^\prime$, respectively, where $2 \leq p <\infty$ and $p^{-1}+  q^{-1} =1$. Such assumptions in \cite{Mei11, Hoe16} allow application of the Lions–Magenes lemma, which says that if $V \subseteq H \subseteq V^\prime$ is an evolution triple, then the set of functions in $L^p_tV$ with time derivatives in $L^q_tV^\prime$ are continuously embedded in $C_tH$, where $1 < p <\infty$ and $p^{-1}+  q^{-1} =1$ (see, e.g., \cite[Proposition 23.23 (ii)]{Zei90}). Consequently, the solutions in \cite{Mei11, Hoe16} remain uniformly bounded in $C_tL^2_x$, which turns out to be crucial for the proofs (see \cite[Lemma 3.3]{Mei11}, \cite[Chapter 5, Lemma 5.5]{Hoe16}). However, in our situation the Lions-Magenes lemma is not applicable. Indeed, although we have uniform bounds for the microscopic concentrations $c_{i,\epsilon}$ in $L^{\frac{5}{4}}_tW^{1,\frac{5}{4}}_x$ (see Proposition \ref{uni_est_thm_2_st}), $\partial_t c_{i,\epsilon}$ is not uniformly bounded in $L^{5}_t\left(W^{1,\frac{5}{4}}_x\right)^\prime$. Next, let us show this point. From Proposition \ref{uni_est_thm_2_st} and applying the Sobolev embedding for $c_{i,\epsilon}$ we get 
\begin{equation}\label{difficulty1}
\lVert c_{i,\epsilon} \rVert_{L^{\frac{5}{4}}_t L^{\frac{15}{7}}_x} + \lVert \nabla c_{i,\epsilon} \rVert_{L^{\frac{5}{4}}_t L^{\frac{5}{4}}_x} + \lVert \nabla \phi_{\epsilon} \rVert_{L^{\infty}_t L^{2}_x} \leq C,
\end{equation}
for some $C>0$ independent of $\epsilon$. Now, using the boundedness of $D_i$ and H{\"o}lder's inequality, we estimate: 
\begin{eqnarray*}
&&\int_{\Omega_\epsilon} ( D_{i} \nabla  c_{i,\epsilon} + D_{i} z_i c_{i,\epsilon} \nabla \phi_\epsilon) \nabla \psi \,dx \\
&&\leq C\left( \lVert \nabla c_{i,\epsilon} (t) \rVert_{L^{\frac{5}{4}}_x} \lVert \nabla \psi  \rVert_{L^{5}_x} 
 +  \lVert  c_{i,\epsilon} (t) \rVert_{L^{\frac{15}{7}}_x}  \lVert \nabla \phi_{\epsilon} (t) \rVert_{L^{2}_x} \lVert \nabla \psi  \rVert_{L^{30}_x}\right). 
\end{eqnarray*}
Consequently, (\ref{difficulty1}) and the weak formulation (\ref{Exist_1_micro}) give that $\partial_t c_{i,\epsilon}$ is uniformly bounded in $L^{\frac{5}{4}}_t \left(W^{1,30}_x\right)^\prime$. Clearly, the above argument does not work if $\psi \in W^{1,5}_x$. Therefore, we have no control of $\partial_t c_{i,\epsilon}$ in $L^{5}_t \left(W^{1,5}_x\right)^\prime$, making the Lions-Magenes lemma inapplicable to our setting.

On the other hand, in \cite{Gah16} strong convergence of the extensions of solutions is proved in $L^2_tL^2_x$ by suitably controlling their shifts with respect to time (see \cite[Lemmas 9 and 10]{Gah16}). There, the underlying assumption is that the solutions and their time derivatives are bounded in $L^2_tH^1_x$ and $L^2_t\left(H ^1_x\right)^\prime$, respectively. However, the arguments in \cite{Gah16} rely heavily on such assumptions, and there seem to be no possibility of adapting them to our less regular setting.

Now, in the following, we discuss the main idea of deriving uniform estimates for our microscopic solutions as well as obtaining the required strong convergence.  

\subsection{Main idea}

First, let us mention the strategy of obtaining the uniform estimates for the microscopic solutions $(c_{i,\epsilon}, \phi_{\epsilon})$ with respect to $\epsilon$. In \cite{Both14}, an approximate PNP (app-PNP) system was introduced by replacing the linear diffusion in (\ref{non_dim_PNP}) by a nonlinear diffusion, $h^\eta_p(r)= r+ \eta r^p$ (see (\ref{non_dim_app_PNP}), (\ref{def_h})), in order to prove global existence of weak solutions to the PNP system. Specifically, the solutions of the app-PNP system converge to the solutions of the PNP system as $\eta \to 0$ (Lemma \ref{conv_approx_micro}). Furthermore, for any fixed $\eta$, the uniform estimates for the microscopic solutions of the app-PNP with respect to the microscopic scale parameter $\epsilon$ have been obtained in \cite{Bha22}. This has been done in order to derive a homogenized model for the app-PNP, for any fixed $\eta >0$, in \cite{Bha22}. Here, we extend the result of \cite{Bha22} and obtain the estimates for the microscopic solutions of the app-PNP uniformly with respect to both the approximation parameter $\eta$ and the microscopic scale parameter $\epsilon$. We obtain this by using the fact that there exists an energy functional associated with the app-PNP system \cite[p. 157]{Both14}, which is non-increasing in time along the solutions \cite[Proposition 3.1]{Bha22}. Finally, these uniform estimates for the microscopic app-PNP as well as the convergence of the app-PNP to the PNP enable us to get the required estimates for the microscopic PNP uniformly with respect to $\epsilon$ (see Theorem \ref{uni_est_thm_2_st}).

Next, we discuss the key contribution of this paper (Theorem \ref{thm_st_conv_micro_1}), which is obtaining strong convergence in the setting of porous media when the microscopic solutions are bounded in a relatively weak space. Indeed, we only obtain uniform bound for $c_{i,\epsilon}$ in $L^\frac{5}{4}_tW^{1,\frac{5}{4}}_x$ (see Proposition \ref{uni_est_thm_2_st}) as well as some energy bounds for the app-PNP system (see Subsection \ref{subsec_est_appPNP}). Again, the idea is to make use of the Aubin-Lions-Simon lemma \cite[Theorem 3]{Sim87} suitably to our setting. As in \cite{Sim87}, to obtain the strong convergence, we prove an equicontinuity type property for the microscopic concentrations. In order to do so, we introduce appropriate cut-off functions (see Subsection \ref{subsec_cut_off}). The cut-off function helps us obtain an estimate for the solution on the part of the domain where the solution is less than or equal to the height of the cut-off function. More precisely, on this part of the domain, the estimate is obtained in a space which is more regular; moreover, this relatively regular estimate depends only on the height of the cut-off function (see Proposition \ref{th_st_cut_off}). This helps us obtain equicontinuity when the values of the solutions are less than or equal to the height of the cut-off function (see estimate of I in Theorem \ref{thm_st_conv_micro_1}). Furthermore, it turns out that when the solutions are larger than the height of the cut-off function, the application of the energy functional is enough in order to obtain the required control on the solutions (see estimates of II, III and IV in Theorem \ref{thm_st_conv_micro_1}). These arguments eventually guarantee the strong convergence of the microscopic concentrations in $L^1_tL^r_x$, for some $r>2$, which is crucial to pass to the limit, as $\epsilon \to 0$, in the non-linear drift term in (\ref{non_dim_NP_eq}). 

Finally, we derive the homogenized model by means of two-scale convergence in $L^p_t L^q_x$ setting. Since the estimates for the solutions are obtained in a weaker space, this setting is suitable for our problem instead of the classical $L^2_tL^2_x$ setting. 

\begin{remark}\label{remark_low_reg}
Although, for each $\eta$ the solutions to the app-PNP system live in a regular space, such regular norms are not uniformly bounded with respect to $\eta$. Indeed, from (\ref{eta_blow_up_eq}) we see that the bound of the norm $\left\lVert c^\eta_{i,\epsilon}\right\rVert_{L^\infty_tL^p_x}$ blows up as $\eta \to 0$. Therefore, we do not have any information on whether $\left\lVert c_{i,\epsilon}\right\rVert_{L^\infty_tL^p_x}$ is finite in the limit. Due to this lack of information, we cannot use a fixed point argument from \cite[Proposition 2.1]{Bha22} and, consequently, cannot conclude that $c_{i,\epsilon} \in L^2_tH^1_x \cap L^\infty_t L^\infty_x$ and $\phi_\epsilon \in L^\infty_t W^{1,\infty}_x$.
\end{remark}

\begin{remark}
Since we are dealing with Poisson's equation with the Neumann boundary condition for the electric potential, note that if $\phi_\epsilon$ is a solution, so is $\phi_\epsilon + C$, for any constant $C$. For the electric potential satisfying the app-PNP, we fix a $C$ (which is needed to obtain the existence result \cite[Proposition 2.1]{Bha22}) by imposing the mean-zero condition. Consequently, the convergence of the app-PNP to PNP implies that $\phi_\epsilon$ also has mean-zero (see Remark \ref{mean_value_zero}).
\end{remark}

\begin{remark}
Throughout this paper, the results hold in both two and three spatial dimensions. More precisely, the embeddings are done for the three-dimensional case; the corresponding embeddings in the two-dimensional setting follow automatically. In fact, for the two-dimensional case, the embeddings hold in more regular spaces. However, for simplicity we do not distinguish between the two- and three-dimensional cases in this paper.
\end{remark}

 \subsection{Outline of the paper}
 This paper is organized as follows. In Section \ref{micro_prob}, the setting of the microscopic model is introduced. Specifically, we describe the microscopic geometry of the domain and write down the microscopic equations and existence results. Section \ref{est_micro} is devoted to obtaining uniform estimates for the microscopic solutions of the app-PNP and PNP. In Section \ref{conv_micro}, we prove strong convergence and two-scale convergence of the solutions of the microscopic PNP. Next, we derive the homogenized PNP equations as well as discuss the uniqueness issue of the homogenized model in Section \ref{homo_model}. Finally, the paper is concluded with an Appendix, where details of some proofs have been added.

\subsection{Function spaces and notations}
Let us introduce the notations and settings that we use in the paper.

(1) Let $(\Omega, \mu)$ be a measure space and $X$ be a Banach space. Then $L^p(\Omega; X)$, $1 \leq p \leq \infty$, denote the standard Lebesgue-Bochner space equipped with the norm $ \displaystyle \lVert f \rVert_{L^p(\Omega; X)} =\left ( \int_{\Omega} \lVert f ( \cdot )\rVert^p_X \,d \mu \right ) ^\frac{1}{p}$, for $1 \leq p < \infty$; if $p = \infty$, the norm is $ \displaystyle \lVert f \rVert_{L^\infty(\Omega; X)} = \esssup_{\omega \in \Omega}\lVert f (\omega) \rVert_X$.

(2) $C_{per} (\overline{Y})$ is the space of continuous functions on $\R^n$ which are also $Y$-periodic, where $Y = (0,1)^n$. 

(3) $W^{1,p}_{per}(Y^f)$, $1<p<\infty$, is the space of functions belonging to the Sobolev space $W^{1,p}(Y^f)$ and having same trace on the opposite faces of the periodicity box $[0,1]^n$. 

(4) The space $W^{1,p}_{per}(Y^f)/ \R$ consists of equivalence classes in $W^{1,p}_{per}(Y^f)$, where the equivalence relation is defined as follows:
$u \sim \upsilon \iff u-\upsilon \ \text{is a constant, where} \  u,\upsilon \in W^{1,p}_{per}(Y^f). $

(5) The continuous dual of a Banach space $X$ is denoted by $X^\prime$. The image of $x \in X$ under $x^\prime \in X^\prime$ is written as a duality pairing $\langle x^\prime, x \rangle_{X^\prime, X}. $

(6) Let $X$, $Y$ be Banach spaces. Then $X \subset Y$ means that $X$ is continuously embedded in $Y$, and the notation $X \subset \subset Y$ implies that $X$ is compactly embedded in $Y$.

(7) For a domain $\Omega \subset \R^n$, its Lebesgue measure is denoted by $|\Omega|$. 

(8) For the dot product involving gradients, we simply use $\nabla u(x) \nabla \upsilon (x)$ instead of $\nabla u(x) \cdot \nabla \upsilon (x)$.

\section{The microscopic problem}\label{micro_prob}
In this section, we precisely formulate the microscopic problem and comment on the properties of microscopic solutions. 
\subsection{Geometry of the microscopic domain}\label{sett_por}

We consider a porous medium occupying a bounded and connected domain $\Omega\subset \mathbb{R}^n, n=2,3$, with $\partial \Omega$ of class $C^3$. The medium has a periodic microstructure, generated with the help of the scaled standard periodicity cell $Y=(0,1)^n$, which consists of a solid part $ Y^s$ and a fluid or pore part $ Y^f$. We assume that $ Y^s$ is an open set such that $\overline{Y^s}\subset Y$ and that the boundary $\Gamma := \partial Y^s$ is of class $C^3$. Furthermore, let $Y^f = Y\setminus \overline{Y^s}$; see also Fig. \ref{fig_domain_PNP} (left). For $k\in \mathbb{Z}^n$, let $Y_k := Y +k$ and $\Gamma_k:= \Gamma + k$. Furthermore, for $j=f, s$, set $Y_k^j:= Y^j + k$.  

Throughout this paper, $\epsilon$ is a sequence of small positive numbers converging to $0$. Now, let 
$
I_\epsilon = \left\{k \in \mathbb{Z}^n \left \vert \epsilon \overline{Y_k} \subset \Omega \right. \right\}.
$
We define the microscopic domain $\Omega_{\epsilon}$ representing the pore part of the porous medium by
$$
\Omega_\epsilon = \Omega \setminus \bigcup_{k \in I_\epsilon} \epsilon \overline { Y_k^s};
$$
see also Fig. \ref{fig_domain_PNP} (right). We remark that the boundary of $ \Omega_\epsilon$ consists of two disjoint parts:
$$
\partial \Omega_\epsilon = \Gamma_\epsilon \cup \partial \Omega,
$$
where 
$
\Gamma_\epsilon := \bigcup_{k \in I_\epsilon} \epsilon \Gamma_k 
$
denotes the boundary of the microscopic solid grains. Finally, we assume that the microscopic domain $\Omega_\epsilon$ is connected with boundary $\partial \Omega_\epsilon$ of class $C^3$.


\subsection{Assumptions on the data}
\begin{itemize}
		\item[(A1)] Let $T$ be an arbitrary positive real number representing the time interval $[0,T]$. 
	\item[(A2)] For the diffusion coefficients, we assume $D_i \in C^2([0,T] \times \overline{\Omega})$ and $m \leq D_i \leq M$, where $i=1, \ldots, P$, for some positive constants $m,M$. 
	\item[(A3)] The boundary data for the electric potential is given by
	\begin{flalign*}
		\xi_\epsilon(x)=
		\begin{cases}
			\epsilon \xi_1 (x, \frac{x}{\epsilon}) &\text{ if $x\in \Gamma_\epsilon$,} \\
			\xi_2(x) &\text{ if $x\in \partial\Omega$,}
		\end{cases}
	\end{flalign*}
	where $\xi_1 (x,y) \in C^2(\overline{\Omega} \times \overline{\Gamma})$ and is periodically extended with respect to $y$ with period $Y$ and $\xi_2 \in C^2(\partial \Omega)$. 
	\item[(A4)]	 For the initial concentrations, we assume  $c^0_i \in C^2(\overline{\Omega})$ with $c^0_i \geq 0$ for $i=1, \ldots, P$.
	\item [(A5)] We assume the following compatibility condition:
	\begin{equation}\label{assum_data}
		\int_{\Omega_\epsilon} \sum_{i=1}^{P} z_i c_i^0 (x) \,dx + \int_{\partial \Omega_\epsilon} \xi_\epsilon(x)\,dS(x) =0.
	\end{equation}
	This compatibility condition has been used in order to prove existence of solutions to the microscopic app-PNP system (\ref{non_dim_app_PNP}) \cite[Proposition 2.1]{Bha22}. Indeed, if we test the equation (\ref{weak_Poisson}) with 1 at $t=0$, we see that this condition is needed.
\end{itemize}
\subsection{Existence of weak solutions for microscopic PNP}
\begin{proposition}\label{ext_micro_prop_st}
	Suppose the assumptions $(A1)-(A5)$ are satisfied. Then there exist $c_{i,\epsilon} \in  L^{\frac{5}{4}}(0,T; W^{1,\frac{5}{4}}(\Omega_\epsilon)) \cap L^{\frac{5}{4}}(0,T; L^\frac{15}{7}(\Omega_\epsilon))$, $c_{i, \epsilon} \geq 0$ a.e. in $(0,T) \times \Omega_\epsilon$ and $\phi_{\epsilon} \in L^\infty(0,T;W^{1,2}(\Omega_\epsilon)) \cap L^2(0,T;W^{2,2}(\Omega_\epsilon)) $ with $\int _{\Omega_\epsilon} \phi_\epsilon (t,x) \,dx   =0$ (for a.e. $t \in (0,T)$) satisfying 
	\begin{eqnarray}\label{Exist_1_micro}
		-	\int_{0}^{T} \int_{\Omega_{\epsilon}} c_{i,\epsilon} \partial_{t} \psi_1 \,dx \,dt+  \int_{0}^{T} \int_{\Omega_\epsilon} ( D_{i} \nabla  c_{i,\epsilon} + D_{i} z_i c_{i,\epsilon} \nabla \phi_\epsilon) \nabla \psi_1  \,dx \,dt \nonumber\\
		= \int_{\Omega_{\epsilon}}c_{i}^0 (x) \psi_1 (0,x) \,dx 
	\end{eqnarray}
	for all $\psi_1 \in C^\infty ([0,T] \times \overline{\Omega}_\epsilon)$ with $\psi_1(T,.) =0$ 
	and 
	\begin{equation}\label{weak_Poisson_PNP}
	\int_{\Omega_\epsilon} \nabla \phi_\epsilon (t) \nabla \psi_2 \,dx  =\int_{\Omega_\epsilon} \sum_{i=1}^{P} z_i c_{i,\epsilon}(t)\psi_2 \,dx   +  \int_{\partial \Omega_\epsilon } \xi_\epsilon \psi_2\,dS(x) 
	\end{equation}
	for all  $\psi_2 \in C^\infty (\overline{\Omega}_\epsilon)$ and for a.e. $t \in (0,T)$. 
\end{proposition}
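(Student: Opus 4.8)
The plan is to obtain the microscopic PNP solution as the limit $\eta \to 0$ of the solutions of the approximate app-PNP system, for which existence is already known (\cite[Proposition 2.1]{Bha22}) and whose convergence to the PNP system is recorded in Lemma \ref{conv_approx_micro}. Concretely, fix $\epsilon$ and let $(c_{i,\epsilon}^\eta, \phi_\epsilon^\eta)$ denote the app-PNP solutions on $\Omega_\epsilon$ with nonlinear diffusion $h_p^\eta$. The first step is to collect the uniform-in-$\eta$ estimates established in Section \ref{est_micro} (Theorem \ref{uni_est_thm_2_st}): boundedness of $c_{i,\epsilon}^\eta$ in $L^{5/4}(0,T;W^{1,5/4}(\Omega_\epsilon)) \cap L^{5/4}(0,T;L^{15/7}(\Omega_\epsilon))$, nonnegativity $c_{i,\epsilon}^\eta \geq 0$, boundedness of $\phi_\epsilon^\eta$ in $L^\infty(0,T;W^{1,2}(\Omega_\epsilon)) \cap L^2(0,T;W^{2,2}(\Omega_\epsilon))$ with zero spatial mean, and a uniform bound on $\partial_t c_{i,\epsilon}^\eta$ in some negative-order space (coming from the weak formulation, since the flux $D_i \nabla c_{i,\epsilon}^\eta + D_i z_i c_{i,\epsilon}^\eta \nabla \phi_\epsilon^\eta$ is bounded in $L^{5/4}$ in space-time by Hölder, using $\nabla \phi_\epsilon^\eta \in L^\infty_t L^2_x$ together with the improved integrability of $c_{i,\epsilon}^\eta$, or more precisely whatever exponent pairing Theorem \ref{uni_est_thm_2_st} provides).

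The second step is to extract subsequences (not relabeled) converging weakly in the respective reflexive spaces: $c_{i,\epsilon}^\eta \rightharpoonup c_{i,\epsilon}$ weakly in $L^{5/4}(0,T;W^{1,5/4}(\Omega_\epsilon))$ and weakly in $L^{5/4}(0,T;L^{15/7}(\Omega_\epsilon))$, and $\phi_\epsilon^\eta \rightharpoonup \phi_\epsilon$ weakly-$\ast$ in $L^\infty(0,T;W^{1,2})$ and weakly in $L^2(0,T;W^{2,2})$. Nonnegativity, the zero-mean constraint $\int_{\Omega_\epsilon}\phi_\epsilon\,dx = 0$, and the compatibility/regularity of the limits pass through weak limits. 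At this stage the linear terms in \eqref{Exist_1_micro} and \eqref{weak_Poisson_PNP} already converge; the initial datum term is untouched since $\psi_1(0,\cdot)$ is fixed. The remaining issue is the nonlinear drift term $\int_0^T\int_{\Omega_\epsilon} D_i z_i c_{i,\epsilon}^\eta \nabla\phi_\epsilon^\eta \cdot \nabla\psi_1$, which is a product of two weakly convergent sequences and hence requires strong convergence of one factor. Here I would invoke Lemma \ref{conv_approx_micro}, which presumably already asserts that, for fixed $\epsilon$, $c_{i,\epsilon}^\eta \to c_{i,\epsilon}$ strongly in some $L^a_t L^b_x$ with $b>2$ (obtained via the Aubin--Lions--Simon argument with cut-off functions described in the introduction, applied at fixed $\epsilon$); combined with $\nabla\phi_\epsilon^\eta \rightharpoonup \nabla\phi_\epsilon$ weakly in $L^2_t L^2_x$ (in fact in $L^2_t L^6_x$ via $W^{2,2}$ Sobolev embedding, $n \leq 3$) and $\nabla\psi_1 \in L^\infty$, the product passes to the limit.

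The third step is to also pass to the limit in the $\eta$-dependent nonlinear diffusion: one must check that $h_p^\eta(c_{i,\epsilon}^\eta)$ or its gradient form converges to the linear term $D_i \nabla c_{i,\epsilon}$. Using the explicit form \eqref{def_h} of $h_p^\eta$ (which agrees with the linear diffusion away from a set shrinking as $\eta\to 0$) together with the strong convergence of $c_{i,\epsilon}^\eta$ and weak convergence of $\nabla c_{i,\epsilon}^\eta$, this term converges; this is essentially the content of Lemma \ref{conv_approx_micro} and I would simply cite it rather than redo it. Finally, a density argument extends the test-function class from the smooth functions used for app-PNP to all $\psi_1 \in C^\infty([0,T]\times\overline\Omega_\epsilon)$ with $\psi_1(T,\cdot)=0$ and all $\psi_2\in C^\infty(\overline\Omega_\epsilon)$, and one notes that \eqref{weak_Poisson_PNP}, being an identity for a.e.\ $t$, survives passage to the limit because the weak limits are taken in Bochner spaces. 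The main obstacle is the nonlinear drift term: everything hinges on having strong convergence of $c_{i,\epsilon}^\eta$ in a space with integrability exponent strictly above $2$ so that it pairs with $\nabla\phi_\epsilon^\eta \in L^2$; securing that strong convergence (uniformly enough) is exactly where the cut-off-function machinery of Section \ref{est_micro} and the energy functional of \cite{Both14, Bha22} do the real work, and I would lean on Lemma \ref{conv_approx_micro} for it. A secondary subtlety is verifying that the negative-order bound on $\partial_t c_{i,\epsilon}^\eta$ is genuinely uniform in $\eta$ given only the weak $L^{5/4}$-type control on the fluxes — one should be careful that the drift flux $c_{i,\epsilon}^\eta \nabla\phi_\epsilon^\eta$ lies in a fixed space, which follows from Hölder with exponents $\tfrac{1}{15/7} + \tfrac{1}{6} < 1$ in space (valid since $n\leq 3$) after checking the time integrability matches.
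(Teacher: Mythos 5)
Your proposal follows essentially the same route as the paper: the paper's proof of Proposition \ref{ext_micro_prop_st} is simply a citation of \cite{Both14} (whose argument is exactly the app-PNP regularization, uniform-in-$\eta$ estimates, and compactness passage you describe), supplemented by the observation that the extra regularity $c_{i,\epsilon}\in L^{5/4}(0,T;W^{1,5/4})\cap L^{5/4}(0,T;L^{15/7})$ follows from the Ladyzhenskaya-type embedding \cite[(3.8), p.~77]{Lad88} applied to the $\eta$-uniform bounds, and that the zero-mean property of $\phi_\epsilon$ is Remark \ref{mean_value_zero}. One caveat: for the uniform-in-$\eta$ bounds you should cite Lemma \ref{log_est_st}, Lemma \ref{uni_est_prop_st} and Theorem \ref{uni_est_thm_1_st} (which concern $c^\eta_{i,\epsilon},\phi^\eta_\epsilon$), not Theorem \ref{uni_est_thm_2_st} — the latter is a statement about the limit functions $c_{i,\epsilon},\phi_\epsilon$ and is itself deduced from this proposition and Lemma \ref{conv_approx_micro}, so invoking it here would be circular.
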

\begin{proof}
The proof can be found in \cite{Both14}. Although \cite{Both14} deals with the Robin boundary condition for the electric potential, the proof can be simply extended to our setting. We remark that the fact that $ c_{i,\epsilon} \in L^{\frac{5}{4}}(0,T; W^{1,\frac{5}{4}}(\Omega_\epsilon))$ was not shown in \cite{Both14}. However, this can be proved by an application of \cite[(3.8), p. 77]{Lad88} (see Proposition \ref{uni_est_thm_1_st} and Proposition \ref{uni_est_thm_2_st} for details). Consequently, by the Sobolev embedding theorem \cite[Corollary 9.14]{Bre11}, we get $ c_{i,\epsilon} \in 
L^{\frac{5}{4}}(0,T; L^\frac{15}{7}(\Omega_\epsilon))$. The non-negativity of $c_{i,\epsilon}$ and the fact that $\phi_{\epsilon}$ has zero mean value are proved in Remark \ref{mean_value_zero}. 
\end{proof}
\subsection{The microscopic app-PNP}\label{app-PNP_def}
The microscopic approximate PNP (app-PNP) contains an additional nonlinearity in the diffusion term. It consists of the following system of equations. 
\begin{subequations}\label{non_dim_app_PNP}
	\begin{align}
		\partial_{t} c^\eta_{i,\epsilon}(t,x) +\nabla \cdot J^\eta_{i,\epsilon} (t,x) &=0  && \mbox{ in }  (0,T) \times \Omega_\epsilon, \label{non_dim_app_NP_eq}
		\\
		J^\eta_{i,\epsilon} (t,x) \cdot \nu_\epsilon &=0   && \mbox{ on }  (0,T) \times \partial \Omega_\epsilon, 
		\\
		c^\eta_{i,\epsilon}(0,x) &= c^{0}_{i} (x)  && \mbox{ in } \Oe,
		\\
		-   \Delta \phi^\eta_\epsilon (t,x) &=  \sum_{i=1}^{P}  z_i c^\eta_{i,\epsilon}  (t,x)   && \mbox{ in }  (0,T) \times \Omega_\epsilon, 
		\\
		 \nabla \phi^\eta_\epsilon (t,x) \cdot \nu_\epsilon &  = \xi_\epsilon (x)    && \mbox{ on } (0,T) \times \partial \Omega_\epsilon ,
	\end{align}
\end{subequations}
where the total flux $J^\eta_{i,\epsilon}$ of the $i$-th species is given by
\begin{equation*}
	J^\eta_{i,\epsilon}(t, x) = - \left(  D_{i} (t,x) \nabla h^\eta_p\left(c^\eta_{i,\epsilon}(t,x)\right)+  D_{i} (t,x)z_i c^\eta_{i,\epsilon} (t,x) \nabla \phi^\eta_\epsilon (t,x) \right).
\end{equation*}
 The function $h^\eta_p$ is defined by
\begin{equation}\label{def_h}
	h^\eta_p(r)= r+ \eta r^p, \ \ r \geq 0, \ \eta \in (0,\infty), \  p \in [4, \infty).
\end{equation}
Here, the exponent $p$ is a parameter in the nonlinear diffusion $h^\eta_p$. For the rest of this paper, let us fix any $p \geq 4$. 
\subsection{Existence of weak solutions for microscopic app-PNP}
\begin{proposition}\label{prop_exist_app_PNP}
Suppose the assumptions $(A1)-(A5)$ are satisfied. Then there exist $c^\eta_{i,\epsilon} \in L^\infty ((0,T)\times \Omega_\epsilon) \cap L^2(0,T; H^1(\Omega_\epsilon))$, $c^\eta_{i, \epsilon} \geq 0$, a.e. in $(0,T) \times \Omega_{\epsilon}$, with $ \partial_{t} c^\eta_{i,\epsilon} \in L^2(0,T; H^1(\Omega_\epsilon)^\prime)$ and $\phi^\eta_\epsilon \in L^\infty (0,T; W^{2,6} (\Omega_\epsilon)) $  with $\int _{\Omega_\epsilon} \phi^\eta_\epsilon (t,x) \,dx =0$, for all $t \in [0,T]$, satisfying 
\begin{eqnarray}\label{Exist_1}
	\left<\partial_{t} c^\eta_{i,\epsilon} ,\psi_1\right>_{H^1(\Omega_\epsilon)^\prime, H^1(\Omega_\epsilon) }+   \int_{\Omega_\epsilon} \left( D_{i} \nabla h^\eta_p \left(c^\eta_{i,\epsilon}\right) +  D_{i} z_i c^\eta_{i,\epsilon} \nabla \phi^\eta_\epsilon\right) \nabla \psi_1  \,dx= 0,
\end{eqnarray}
for all $\psi_1 \in H^1(\Omega_\epsilon)$ and almost every $t \in (0,T)$, together with the initial condition
\begin{equation}\label{Exist_2}
	c^\eta_{i,\epsilon}(0)= c_i^0 \quad \mbox{ in } L^2(\Omega_\epsilon),
\end{equation}
and 
\begin{equation}\label{weak_Poisson}
	 \int_{\Omega_\epsilon} \nabla \phi^\eta_\epsilon (t) \nabla \psi_2 \,dx = \int_{\Omega_\epsilon} \sum_{i=1}^{P} z_i c^\eta_{i,\epsilon}(t) \psi_2 \,dx + \int_{\partial \Omega_\epsilon } \xi_\epsilon \psi_2\,dS(x)
\end{equation}
for all  $\psi_2 \in H^1(\Omega_\epsilon)$ and all $t \in [0,T]$.
\end{proposition}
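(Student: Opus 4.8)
The plan is to construct a solution by a decoupled fixed‑point scheme combined with a truncation of the nonlinearities, following the strategy of \cite{Both14} and adapting it to the present porous geometry and to the non‑homogeneous Neumann condition on $\phi^\eta_\epsilon$. Fix $\eta>0$ and $\epsilon>0$, and for a truncation height $K>0$ let $T_K$ denote truncation at level $K$. Given a nonnegative $\bar c=(\bar c_1,\dots,\bar c_P)\in L^2((0,T)\times\Oe)^P$, I would first solve the linear Neumann problem $-\Delta\phi=\sum_i z_i\bar c_i$ in $\Oe$, $\nabla\phi\cdot\nu_\epsilon=\xi_\epsilon$ on $\partial\Oe$, normalized by $\int_{\Oe}\phi\,dx=0$; elliptic theory on the $C^3$ domain gives $\phi\in W^{1,2}(\Oe)$, and a bootstrap using $\bar c_i\in L^q$ and $\xi_\epsilon\in C^2$ upgrades this to $\phi\in W^{2,q}(\Oe)$. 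Then, for each $i$, I would solve the linear uniformly parabolic problem $\partial_t c_i-\nabla\cdot\big(D_i(1+p\eta\,T_K(\bar c_i)^{p-1})\nabla c_i\big)=\nabla\cdot\big(D_i z_i T_K(\bar c_i)\nabla\phi\big)$ with the no‑flux boundary condition and initial datum $c^0_i$, obtaining $c_i\in L^2(0,T;H^1(\Oe))$ with $\partial_t c_i\in L^2(0,T;H^1(\Oe)')$. This defines a map $\mathcal F:\bar c\mapsto(c_1,\dots,c_P)$; the parabolic energy estimate makes it a self‑map of a suitable closed bounded convex subset of $L^2((0,T)\times\Oe)^P$ (invariance following from the a priori bounds of the next step), the bound on $\partial_t c_i$ together with the Aubin--Lions lemma makes it compact, and stability of the linear problems makes it continuous, so Schauder's (or Leray--Schauder's) fixed‑point theorem yields a solution of the $K$‑truncated app‑PNP system.

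Next I would derive the a priori estimates needed to remove the truncation and to read off the stated regularity. Testing the $i$‑th equation with the negative part of $c_i$ and using $c^0_i\ge 0$ gives $c_i\ge 0$. A Stampacchia‑type truncation argument — testing with $(c_i-\ell)_+$, using that the diffusion coefficient $1+p\eta\,T_K(\cdot)^{p-1}$ is bounded below by $1$ while the drift is controlled through $\|\nabla\phi\|_{L^\infty_tL^q_x}\lesssim\|\sum_i z_i c_i\|_{L^\infty_tL^q_x}$ — produces an $L^\infty((0,T)\times\Oe)$ bound on $c_i$ depending on $\eta$ (through $p$) but not on $K$; choosing $K$ larger than this bound removes the truncation. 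Simultaneously, the energy functional of \cite[p.~157]{Both14}, \cite[Proposition~3.1]{Bha22} (schematically, $\sum_i\int_{\Oe}\big(c_i\log c_i-c_i+\tfrac{\eta}{p-1}c_i^{\,p}\big)+\tfrac12\int_{\Oe}|\nabla\phi|^2$ plus the boundary contribution of $\xi_\epsilon$) is non‑increasing along solutions and yields the bound of $c^\eta_{i,\epsilon}$ in $L^2(0,T;H^1(\Oe))$; feeding this back into the equation gives $\partial_t c^\eta_{i,\epsilon}\in L^2(0,T;H^1(\Oe)')$, and then the initial condition $c^\eta_{i,\epsilon}(0)=c^0_i$ in $L^2(\Oe)$ follows by the usual trace argument.

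For the potential I would integrate the $i$‑th Nernst--Planck equation over $\Oe$ and use the no‑flux condition to conclude that $t\mapsto\int_{\Oe}c^\eta_{i,\epsilon}(t)\,dx$ is constant; hence $\int_{\Oe}\sum_i z_i c^\eta_{i,\epsilon}(t)\,dx+\int_{\partial\Oe}\xi_\epsilon\,dS(x)=0$ for every $t$ by assumption (A5), so the Neumann problem for $\phi^\eta_\epsilon$ is solvable at each time. Elliptic regularity on the $C^3$ domain with right‑hand side in $L^\infty$ (from the previous step) and boundary datum $\xi_\epsilon$ of class $C^2$ then gives $\phi^\eta_\epsilon\in L^\infty(0,T;W^{2,q}(\Oe))$ for every $q<\infty$, in particular the asserted $W^{2,6}$‑regularity, with the mean‑zero normalization preserved throughout; passing to the limit in the nonlinear drift and diffusion terms (keeping a Galerkin/approximation parameter if preferred, using the strong $L^2$ convergence from Aubin--Lions and the weak‑$*$ convergence of $\phi^\eta_\epsilon$ in $L^\infty_tW^{2,q}_x$) recovers the weak formulation \eqref{Exist_1}--\eqref{weak_Poisson}.

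The hard part will be the a priori estimates of the second paragraph: securing the $L^\infty$ bound on the concentrations that is uniform in the truncation level — which is exactly what allows the truncation to be removed and which relies on the super‑linear structure of $h^\eta_p$ — together with the energy estimate; and, since here the Poisson equation carries the non‑homogeneous Neumann datum $\xi_\epsilon$ instead of the Robin condition treated in \cite{Both14}, one must carefully check that the compatibility condition (A5) propagates in time so that $\phi^\eta_\epsilon$ stays well defined with the stated regularity. The remaining linear‑PDE and compactness steps are routine.
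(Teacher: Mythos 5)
The paper does not prove this proposition in-house: its ``proof'' is a citation to \cite[Proposition 2.1]{Bha22}, whose argument (adapted from \cite{Both14}) is essentially the scheme you outline --- decoupling, truncation, a Schauder fixed point, the energy functional, and an $L^\infty$ bound for the concentrations driven by the superlinear diffusion $h^\eta_p$, followed by elliptic regularity for $\phi^\eta_\epsilon$. So your strategy matches the cited proof. Two points in your write-up would need repair before the scheme closes, though both are standard. First, in the fixed-point step the Neumann problem $-\Delta\phi=\sum_i z_i\bar c_i$, $\nabla\phi\cdot\nu_\epsilon=\xi_\epsilon$ is \emph{not} solvable for an arbitrary nonnegative $\bar c$ in an $L^2$-ball: the compatibility condition $\int_{\Omega_\epsilon}\sum_i z_i\bar c_i(t)\,dx+\int_{\partial\Omega_\epsilon}\xi_\epsilon\,dS=0$ must hold for every iterate, not only for the eventual solution. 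You therefore have to restrict the convex set to functions with $\int_{\Omega_\epsilon}\bar c_i(t)\,dx=\int_{\Omega_\epsilon}c^0_i\,dx$ for a.e.\ $t$ (which assumption (A5) then makes compatible, and which the mass-conserving no-flux parabolic solve preserves); your remark about (A5) ``propagating in time'' addresses the solution but not the iterates. Second, you freeze the unknown in the drift, writing the source as $\nabla\cdot\bigl(D_iz_iT_K(\bar c_i)\nabla\phi\bigr)$; with that linearization, testing with the negative part of $c_i$ does not yield nonnegativity, since the source has no sign on $\{c_i<0\}$. The cleaner linearization keeps $c_i$ in the drift, $\nabla\cdot\bigl(D_iz_ic_i\nabla\phi\bigr)$ with $\phi$ computed from $\bar c$, so that each species solves a linear advection--diffusion equation to which the comparison principle applies (alternatively, one argues nonnegativity only at the fixed point, with $T_K$ vanishing on the negative axis). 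With these adjustments your outline reproduces the proof of \cite[Proposition 2.1]{Bha22}.
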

\begin{proof}
We refer to \cite[Proposition 2.1]{Bha22}. 
\end{proof}
\begin{remark}\label{non-negativity}
Let us prove the non-negativity of $c_{i,\epsilon}^\eta$, provided the initial concentration $c_i^0$ is non-negative. Let $\widebar{h^\eta_p}$ be the extension of $h^\eta_p$:
\begin{flalign*}
\widebar{h^\eta_p}(r)	=
	\begin{cases}
		r + \eta r^p &\text{ if $r\geq 0$,} \\
		r  &\text{ if $r <0$.}
	\end{cases}
\end{flalign*}
First, we note that the existence result given in Proposition \ref{prop_exist_app_PNP} still holds if we replace $h^\eta_p$ by $\widebar{h^\eta_p}$ in (\ref{Exist_1}). Indeed, this essentially follows from the fact that, for a given $\phi_\epsilon^\eta$, the argument that gives existence of $c_{i,\epsilon}^\eta$ for $h^\eta_p$ in \cite[p. 158]{Both14} also works to give existence of a $c_{i,\epsilon}^\eta$ for $\widebar{h^\eta_p}$. Now, it is enough to prove non-negativity of the $c^\eta_{i,\epsilon}$ satisfying the equation involving $\widebar{h^\eta_p}$. Testing with $\left(c_{i,\epsilon}^\eta\right)^- : = \min\{c_{i,\epsilon}^\eta, 0\}$, we get: 
\begin{eqnarray}\label{non-negativity_pf}
\frac{1}{2} \frac{d}{dt} \int_{\Oe} \left | \left(c_{i,\epsilon}^\eta\right)^- \right |^2 \,dx + \int_{\Oe}  D_i \nabla \widebar{h^\eta_p} (c_{i,\epsilon}^\eta) \nabla \left(c_{i,\epsilon}^\eta\right)^- \,dx \nonumber \\ 
= - \int_{\Oe} D_i z_i \left(c_{i,\epsilon}^\eta\right)^- \nabla \phi_\epsilon ^ \eta \nabla  \left(c_{i,\epsilon}^\eta\right)^- \,dx. &&
\end{eqnarray}
Now, noting 
\begin{equation*}
\int_{\Oe}  D_i \nabla \widebar{h^\eta_p} (c_{i,\epsilon}^\eta) \nabla \left(c_{i,\epsilon}^\eta\right)^- \,dx = \int_{\Oe} D_i \left | \nabla \left(c_{i,\epsilon} ^\eta \right)^- \right |^2 \,dx \geq m \int_{\Oe} \left | \nabla \left(c_{i,\epsilon} ^\eta \right)^- \right |^2 \,dx,
\end{equation*}
using $\phi_\epsilon^\eta \in L^{\infty}(0,T; W^{1,\infty}(\Oe))$ and Young's inequality in the RHS of (\ref{non-negativity_pf}), we have, for any $\delta >0$, that 
\begin{align*}
\frac{1}{2} \frac{d}{dt} \int_{\Oe} \left | \left(c_{i,\epsilon}^\eta\right)^- \right |^2 \,dx + m \int_{\Oe} \left | \left( \nabla c_{i,\epsilon} ^\eta \right) ^- \right |^2 \,dx  \nonumber \\ 
\leq C \left( \delta \int_{\Oe} \left | \nabla \left(c_{i,\epsilon} ^\eta \right)^- \right |^2 \,dx + C_\delta \int_{\Oe} \left |  \left(c_{i,\epsilon} ^\eta \right)^- \right |^2 \,dx \right). 
\end{align*}
Consequently, choosing $C\delta < m$ to absorb the gradient term in the RHS by the LHS, applying Gr{\"o}nwall's inequality and using the initial condition $\left(c_{i,\epsilon} ^\eta \right)^- (0) =0$, we conclude that $\left(c_{i,\epsilon} ^\eta \right)^-  =0$.
\end{remark}
\subsection{Convergence of microscopic app-PNP to microscopic PNP}
\begin{lemma}\label{conv_approx_micro}
Let us fix any $\epsilon >0$. Then there exists a subsequence of $\eta$ (still indexed by $\eta$) such that $c^\eta_{i,\epsilon}$ converges to $c_{i,\epsilon}$ strongly in $L^1((0,T) \times \Oe)$, and $\phi_\epsilon^\eta$ converges to $\phi_\epsilon$ weakly in  $L^q(0,T;W^{1,2}(\Omega_{\epsilon}))$, for all $q \in (1, \infty)$, as $\eta \to 0$. 
\end{lemma}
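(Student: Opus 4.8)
The plan is the standard compactness scheme for this kind of regularization, carried out with $\epsilon$ fixed; this is precisely the passage $\eta\to0$ used in \cite{Both14} to build weak solutions of the microscopic PNP system on a fixed domain, and I would reproduce it so as to pin down the solution $(c_{i,\epsilon},\phi_\epsilon)$ and the topologies of convergence. First I would collect the a priori bounds for the app-PNP \eqref{non_dim_app_PNP} that are uniform in $\eta$: from Proposition \ref{prop_exist_app_PNP} together with the uniform-in-$\eta$ energy estimates on a fixed domain from \cite{Both14} (cf.\ also \cite{Bha22}), the family $\{c^\eta_{i,\epsilon}\}_\eta$ is bounded in $L^{5/4}(0,T;W^{1,5/4}(\Omega_\epsilon))\cap L^\infty(0,T;L^1(\Omega_\epsilon))$ and hence, by Sobolev embedding, in $L^{5/4}(0,T;L^{15/7}(\Omega_\epsilon))$; the family $\{\phi^\eta_\epsilon\}_\eta$ is bounded in $L^\infty(0,T;W^{1,2}(\Omega_\epsilon))$ (using $\int_{\Omega_\epsilon}\phi^\eta_\epsilon=0$ and the Poincar\'e inequality); and, moreover, $\eta^{1/2}\nabla\big((c^\eta_{i,\epsilon})^{p/2}\big)$ is bounded in $L^2((0,T)\times\Omega_\epsilon)$ while $\eta^{1/2}(c^\eta_{i,\epsilon})^{p/2}$ is bounded in $L^\infty(0,T;L^2(\Omega_\epsilon))$. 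Applying the parabolic interpolation $L^\infty(0,T;L^2(\Omega_\epsilon))\cap L^2(0,T;H^1(\Omega_\epsilon))\hookrightarrow L^{2(n+2)/n}((0,T)\times\Omega_\epsilon)$ to $\eta^{1/2}(c^\eta_{i,\epsilon})^{p/2}$ then also yields $\eta(c^\eta_{i,\epsilon})^p$ bounded in $L^{(n+2)/n}((0,T)\times\Omega_\epsilon)$, an exponent strictly above $1$; this gain is the crux of the argument.

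Next I would establish the compactness. Writing the app-PNP flux as $J^\eta_{i,\epsilon}=-D_i\nabla c^\eta_{i,\epsilon}-\eta D_i\nabla\big((c^\eta_{i,\epsilon})^p\big)-D_iz_ic^\eta_{i,\epsilon}\nabla\phi^\eta_\epsilon$ and bounding its three pieces by the estimates above and Hölder's inequality (the middle one via $\eta\nabla\big((c^\eta_{i,\epsilon})^p\big)=2\big(\eta^{1/2}(c^\eta_{i,\epsilon})^{p/2}\big)\big(\eta^{1/2}\nabla(c^\eta_{i,\epsilon})^{p/2}\big)$, the last one using $c^\eta_{i,\epsilon}\in L^{5/4}(0,T;L^{15/7}(\Omega_\epsilon))$ and $\nabla\phi^\eta_\epsilon\in L^\infty(0,T;L^2(\Omega_\epsilon))$), the fluxes $\{J^\eta_{i,\epsilon}\}_\eta$ are bounded in $L^1((0,T)\times\Omega_\epsilon)$, so by \eqref{Exist_1} (valid for test functions in $W^{1,\infty}(\Omega_\epsilon)$) and the no-flux condition $\{\partial_t c^\eta_{i,\epsilon}\}_\eta$ is bounded in $L^1(0,T;(W^{1,\infty}(\Omega_\epsilon))')$. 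Since $W^{1,5/4}(\Omega_\epsilon)\subset\subset L^{5/4}(\Omega_\epsilon)\subset (W^{1,\infty}(\Omega_\epsilon))'$, the Aubin-Lions-Simon lemma \cite[Theorem 3]{Sim87} gives a subsequence of $\eta$ along which $c^\eta_{i,\epsilon}\to c_{i,\epsilon}$ strongly in $L^{5/4}(0,T;L^{5/4}(\Omega_\epsilon))$, hence in $L^1((0,T)\times\Omega_\epsilon)$ and, after a further extraction, a.e.\ in $(0,T)\times\Omega_\epsilon$, with $c_{i,\epsilon}\ge0$; interpolating this $L^1$-convergence with the bound in $L^{5/4}(0,T;L^{15/7}(\Omega_\epsilon))$ I also obtain $c^\eta_{i,\epsilon}\to c_{i,\epsilon}$ strongly in $L^{r_0}(0,T;L^2(\Omega_\epsilon))$ for some $r_0>1$. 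Extracting once more, $\nabla c^\eta_{i,\epsilon}\rightharpoonup\nabla c_{i,\epsilon}$ weakly in $L^{5/4}((0,T)\times\Omega_\epsilon)$ and $\phi^\eta_\epsilon\rightharpoonup\phi_\epsilon$ weakly-$\ast$ in $L^\infty(0,T;W^{1,2}(\Omega_\epsilon))$; the latter in particular gives the weak convergence in every $L^q(0,T;W^{1,2}(\Omega_\epsilon))$, $1<q<\infty$, claimed in the lemma, together with $\int_{\Omega_\epsilon}\phi_\epsilon=0$.

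With these convergences I would pass to the limit in the weak formulations. Rewriting \eqref{Exist_1}--\eqref{Exist_2} in the time-integrated form tested against $\psi_1\in C^\infty([0,T]\times\overline{\Omega}_\epsilon)$ with $\psi_1(T,\cdot)=0$: the time-derivative term $-\int_0^T\!\!\int_{\Omega_\epsilon} c^\eta_{i,\epsilon}\partial_t\psi_1$ and the initial term are handled by the strong $L^1$-convergence; the linear diffusion term $\int_0^T\!\!\int_{\Omega_\epsilon} D_i\nabla c^\eta_{i,\epsilon}\cdot\nabla\psi_1$ by the weak $L^{5/4}$-convergence of the gradients; and the drift term $\int_0^T\!\!\int_{\Omega_\epsilon} D_iz_ic^\eta_{i,\epsilon}\nabla\phi^\eta_\epsilon\cdot\nabla\psi_1$ because $c^\eta_{i,\epsilon}D_iz_i\nabla\psi_1\to c_{i,\epsilon}D_iz_i\nabla\psi_1$ strongly in $L^1(0,T;L^2(\Omega_\epsilon))$ (using the $L^{r_0}(0,T;L^2(\Omega_\epsilon))$-convergence and $\nabla\psi_1\in L^\infty$) while $\nabla\phi^\eta_\epsilon\rightharpoonup\nabla\phi_\epsilon$ weakly-$\ast$ in $L^\infty(0,T;L^2(\Omega_\epsilon))=\big(L^1(0,T;L^2(\Omega_\epsilon))\big)'$. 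Likewise \eqref{weak_Poisson}, tested against $\psi_2\in C^\infty(\overline{\Omega}_\epsilon)$ and multiplied by $\theta\in C^\infty_c(0,T)$, passes to the limit (left-hand side by the weak convergence of $\nabla\phi^\eta_\epsilon$, right-hand side by the strong $L^1$-convergence of $c^\eta_{i,\epsilon}$, the boundary term being $\eta$-independent), recovering \eqref{weak_Poisson_PNP} for a.e.\ $t$.

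The one term needing more care, and the main obstacle, is the regularizing flux $\int_0^T\!\!\int_{\Omega_\epsilon}\eta D_i\nabla\big((c^\eta_{i,\epsilon})^p\big)\cdot\nabla\psi_1$: the naive bounds only make $\eta\nabla\big((c^\eta_{i,\epsilon})^p\big)$ bounded, not small, in $L^1((0,T)\times\Omega_\epsilon)$, so I must show it vanishes in the limit. Writing $\eta\nabla\big((c^\eta_{i,\epsilon})^p\big)=2\big(\eta^{1/2}(c^\eta_{i,\epsilon})^{p/2}\big)\big(\eta^{1/2}\nabla(c^\eta_{i,\epsilon})^{p/2}\big)$, the second factor stays bounded in $L^2((0,T)\times\Omega_\epsilon)$, while the first converges to $0$ strongly in $L^2((0,T)\times\Omega_\epsilon)$: indeed $\eta(c^\eta_{i,\epsilon})^p\to0$ a.e.\ (since $c^\eta_{i,\epsilon}\to c_{i,\epsilon}<\infty$ a.e.\ and $\eta\to0$) and is bounded in $L^{(n+2)/n}((0,T)\times\Omega_\epsilon)$, hence equi-integrable in $L^1$, so $\eta(c^\eta_{i,\epsilon})^p\to0$ in $L^1((0,T)\times\Omega_\epsilon)$ by Vitali's theorem, which is exactly $\|\eta^{1/2}(c^\eta_{i,\epsilon})^{p/2}\|_{L^2((0,T)\times\Omega_\epsilon)}\to0$. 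Therefore $\eta\nabla\big((c^\eta_{i,\epsilon})^p\big)\to0$ in $L^1((0,T)\times\Omega_\epsilon)$ and this term drops out. Collecting everything, $(c_{i,\epsilon},\phi_\epsilon)$ satisfies \eqref{Exist_1_micro} and \eqref{weak_Poisson_PNP}, i.e.\ it is a weak solution of the microscopic PNP system as in Proposition \ref{ext_micro_prop_st} (which is in fact how that solution is constructed in \cite{Both14}), and this proves the lemma. Everything apart from this last point is routine once the uniform-in-$\eta$ estimates are available; it is the vanishing of the nonlinear regularizing flux --- only $O(1)$ under the crude estimates, made small only by combining the $L^{(n+2)/n}$-bound from parabolic interpolation with the a.e.\ convergence supplied by Aubin-Lions-Simon --- that is the real difficulty.
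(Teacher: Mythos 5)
Your proof is correct in its essentials, but it takes a genuinely different route from the paper's. The paper does not apply the compactness lemma to $c^\eta_{i,\epsilon}$ itself: it applies Simon's Corollary~4 to $\sqrt{c^\eta_{i,\epsilon}+1}$ with the triple $H^1(\Omega_\epsilon)\subset\subset L^2(\Omega_\epsilon)\subset V_\epsilon^\prime$, where $V_\epsilon=H^1(\Omega_\epsilon)\cap L^\infty(\Omega_\epsilon)$, the time-derivative bound $\partial_t\sqrt{c^\eta_{i,\epsilon}+1}\in L^1(0,T;V_\epsilon^\prime)$ coming from the chain rule and the weighted flux estimate $\big\lVert |J^\eta_{i,\epsilon}|/\sqrt{c^\eta_{i,\epsilon}+1}\big\rVert_{L^2}\le C$ of Lemma~\ref{uni_est_prop_st}; strong $L^2$ convergence of $\sqrt{c^\eta_{i,\epsilon}+1}$ then yields the strong $L^1$ convergence of $c^\eta_{i,\epsilon}$. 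For the limit of the flux, the paper factors $J^\eta_{i,\epsilon}+D_i\nabla c^\eta_{i,\epsilon}=-D_i\sqrt{c^\eta_{i,\epsilon}}\,\mathscr{F}$, proves weak relative $L^2$-compactness of $\mathscr{F}$ via the weighted estimate with weights $c^\eta_{i,\epsilon}+\rho_m$ and monotone convergence, and defers the identification of the limit to \cite[Theorem 1]{Both14}. Your version instead runs Aubin--Lions--Simon directly on $c^\eta_{i,\epsilon}$ with the triple $W^{1,5/4}\subset\subset L^{5/4}\subset (W^{1,\infty})^\prime$, which works because the raw flux is bounded in $L^1$, and it disposes of the regularizing term by a self-contained argument (Ladyzhenskaya interpolation to get $\eta(c^\eta_{i,\epsilon})^p$ bounded in $L^{(n+2)/n}$, a.e.\ convergence, Vitali), giving \emph{strong} $L^1$ convergence of $\eta\nabla\big((c^\eta_{i,\epsilon})^p\big)$ to zero rather than only a weak limit. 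That is arguably cleaner and more explicit than the paper's deferral to \cite{Both14}. The one ingredient you use that is not recorded anywhere in the paper's displayed estimates is the bound $\eta\,\esssup_t\int_{\Omega_\epsilon}(c^\eta_{i,\epsilon})^p\,dx\le C$; it does follow from the energy functional of \cite[p.~157]{Both14} (which contains the term $\tfrac{\eta}{p-1}\int c^p$), but you should cite that explicitly rather than gesture at ``the uniform-in-$\eta$ energy estimates,'' since Lemma~\ref{log_est_st} as stated only delivers the $c\log c$ and Dirichlet-energy bounds. With that reference supplied, your argument is complete.
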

The proof can be found in \cite[Theorem 1]{Both14}. For the reader's convenience, we add a few details to this proof in Appendix. Furthermore, we show that a different triple of Banach spaces can be used in order to extract a strongly convergent subsequence of $c^\eta_{i,\epsilon}$ converging to $c_{i,\epsilon}$ in $L^1((0,T) \times \Oe)$ as $\eta \to 0$. 
\begin{remark}\label{mean_value_zero}
As a corollary, we have non-negativity of $c_{i,\epsilon}$. Indeed, this follows from the facts that $c_{i,\epsilon}^\eta$ are non-negative (see Remark \ref{non-negativity}) and, up to a subsequence, $c_{i,\epsilon}^\eta \to c_{i,\epsilon}$ as $\eta \to 0$ a.e. in $(0,T)\times \Oe$. Moreover, we have that the electric potential $\phi_{\epsilon}$ satisfying the microscopic PNP has zero mean value, namely $\displaystyle \int_{\Oe} \phi_\epsilon (t,x) \,dt = 0$ for a.e. $t \in (0,T)$. This is a direct consequence of the convergence property of $\phi_\epsilon^\eta$ to $\phi_\epsilon$ from Lemma \ref{conv_approx_micro} and $\phi_\epsilon^\eta$ having zero mean value (by Proposition \ref{prop_exist_app_PNP}).
\end{remark}
	\section{Uniform estimates for microscopic solutions}\label{est_micro}
In this section, we obtain estimates for solutions of the microscopic PNP uniformly w.r.t. the microscopic scale parameter $\epsilon$ (Proposition \ref{uni_est_thm_2_st}). We achieve this by deriving uniform estimates for solutions of the microscopic app-PNP system. It turns out that the estimates for the microscopic app-PNP also play a crucial role in the convergence result for the solutions of the microscopic PNP (see Section 4). The fundamental aspect of deriving the estimates for the app-PNP is that we obtain these bounds uniformly with respect to both the approximation parameter $\eta$ and the microscopic scale parameter $\epsilon$. 
\subsection{Estimates for microscopic app-PNP}\label{subsec_est_appPNP}
	\begin{lemma}[Conservation of mass]\label{lemma_conv_mass}
	 For all $\eta, \epsilon >0$ and for all $t \in [0,T]$, one has 
		\begin{equation*}
\int_{\Oe} c^\eta_{i, \epsilon} (t,x) \,dx =  \int_{\Oe} c^0_{i} (x) \,dx. 
		\end{equation*}
In other words, we have the estimate: $\displaystyle \left \lVert c^\eta_{i, \epsilon}\right\rVert_{C([0,T]; L^1 (\Oe))} \leq C$, where $C >0$ is a constant independent of $\eta$ and $\epsilon$. 
		\end{lemma}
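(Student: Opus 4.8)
The plan is to exploit the fact that the no-flux boundary condition is already built into the weak formulation \eqref{Exist_1}, and simply test against the constant function. Since $\Omega_\epsilon$ is bounded, the constant $\psi_1 \equiv 1$ belongs to $H^1(\Omega_\epsilon)$ and is thus an admissible test function. Inserting $\psi_1 \equiv 1$ into \eqref{Exist_1}, the spatial integral vanishes because $\nabla \psi_1 = 0$, leaving
\[
\left\langle \partial_t c^\eta_{i,\epsilon}(t),\, 1 \right\rangle_{H^1(\Omega_\epsilon)',\, H^1(\Omega_\epsilon)} = 0 \qquad \text{for a.e. } t \in (0,T).
\]

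Next I would invoke the standard regularity result for functions in a Gelfand triple: since $c^\eta_{i,\epsilon} \in L^2(0,T; H^1(\Omega_\epsilon))$ with $\partial_t c^\eta_{i,\epsilon} \in L^2(0,T; H^1(\Omega_\epsilon)')$ (Proposition \ref{prop_exist_app_PNP}), for every $v \in H^1(\Omega_\epsilon)$ the scalar function $t \mapsto \left( c^\eta_{i,\epsilon}(t), v \right)_{L^2(\Omega_\epsilon)}$ is absolutely continuous on $[0,T]$ with derivative $\left\langle \partial_t c^\eta_{i,\epsilon}(t), v \right\rangle$ for a.e.\ $t$. Choosing $v \equiv 1$ identifies this function with $t \mapsto \int_{\Omega_\epsilon} c^\eta_{i,\epsilon}(t,x)\,dx$, whose a.e.\ derivative vanishes by the previous display; hence it is constant on $[0,T]$. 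Evaluating at $t = 0$ and using the initial condition \eqref{Exist_2}, the constant equals $\int_{\Omega_\epsilon} c^0_i(x)\,dx$, which is the claimed identity.

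The uniform bound is then immediate. Because $c^\eta_{i,\epsilon}(t,\cdot) \geq 0$ a.e.\ in $\Omega_\epsilon$, one has
\[
\left\lVert c^\eta_{i,\epsilon}(t) \right\rVert_{L^1(\Omega_\epsilon)} = \int_{\Omega_\epsilon} c^\eta_{i,\epsilon}(t,x)\,dx = \int_{\Omega_\epsilon} c^0_i(x)\,dx \leq \int_{\Omega} c^0_i(x)\,dx \leq |\Omega|\, \lVert c^0_i \rVert_{L^\infty(\Omega)},
\]
using $\Omega_\epsilon \subseteq \Omega$, $c^0_i \geq 0$, and $c^0_i \in C^2(\overline{\Omega})$ from (A4). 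The right-hand side is independent of $\eta$ and $\epsilon$, and since $c^\eta_{i,\epsilon} \in C([0,T]; L^2(\Omega_\epsilon)) \hookrightarrow C([0,T]; L^1(\Omega_\epsilon))$, taking the supremum over $t \in [0,T]$ gives $\lVert c^\eta_{i,\epsilon} \rVert_{C([0,T]; L^1(\Omega_\epsilon))} \leq C$ with $C := |\Omega|\, \max_{1 \leq i \leq P} \lVert c^0_i \rVert_{L^\infty(\Omega)}$.

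There is essentially no obstacle here; the only two points that deserve a word of care are the admissibility of the constant test function (unproblematic since $\Omega_\epsilon$ is bounded) and the absolute continuity of $t \mapsto \int_{\Omega_\epsilon} c^\eta_{i,\epsilon}(t,x)\,dx$, which is exactly the content of the standard lemma on $L^2(0,T;V) \cap H^1(0,T;V')$ applied to the triple $H^1(\Omega_\epsilon) \hookrightarrow L^2(\Omega_\epsilon) \hookrightarrow H^1(\Omega_\epsilon)'$. One could instead smooth in time and pass to the limit, but the Gelfand-triple route is the cleanest.
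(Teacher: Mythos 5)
Your proof is correct and follows the same route as the paper: test the weak formulation \eqref{Exist_1} with $\psi_1\equiv 1$ so the gradient term drops, deduce that $t\mapsto\int_{\Omega_\epsilon}c^\eta_{i,\epsilon}(t,x)\,dx$ is constant, and bound the constant by $|\Omega|\,\lVert c_i^0\rVert_{L^\infty}$. The only difference is that you spell out the Gelfand-triple absolute-continuity step that the paper leaves implicit, which is a harmless (indeed welcome) elaboration.
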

		\begin{proof}
Testing the weak formulation (\ref{Exist_1}) with 1, we get 
\begin{equation*}
\int_{\Oe} c^\eta_{i, \epsilon} (t,x) \,dx = \int_{\Oe} c^0_{i} (x) \,dx \leq \left \lVert c_i^0 \right \rVert_{L^\infty(\Omega_\epsilon)} \left | \Oe \right | \leq C. 
\end{equation*}
		\end{proof}
Next, the energy functional associated with the app-PNP (see \ref{appen_energy_def}) helps us obtain the uniform estimates given in the following two lemmas. 
		\begin{lemma}\label{log_est_st}
There exists a constant $C >0$ independent of $\eta$ and $\epsilon$ such that
\begin{equation*}
\esssup_{t \in (0,T)} \int_{\Oe} \left|c^\eta_{i, \epsilon} (t,x) \log c^\eta_{i, \epsilon} (t,x)\right| \,dx \leq C, \ \ \left \lVert  \phi^\eta_{\epsilon} \right \rVert_{L^\infty (0,T;H^1(\Omega_\epsilon))} \leq C. 
\end{equation*}
		\end{lemma}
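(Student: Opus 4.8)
The plan is to extract both bounds from the monotonicity of the energy functional attached to the app-PNP system. Recall from \cite[p.~157]{Both14} and \cite[Proposition~3.1]{Bha22} that the functional
\begin{equation*}
	E^\eta_\epsilon(t) = \sum_{i=1}^{P}\int_{\Omega_\epsilon}\left( c^\eta_{i,\epsilon}\log c^\eta_{i,\epsilon} - c^\eta_{i,\epsilon} + \tfrac{\eta}{p-1}(c^\eta_{i,\epsilon})^p \right)dx + \tfrac12\int_{\Omega_\epsilon}|\nabla\phi^\eta_\epsilon|^2\,dx
\end{equation*}
(up to a boundary contribution of the form $\int_{\partial\Omega_\epsilon}\xi_\epsilon\phi^\eta_\epsilon\,dS$ coming from the non-homogeneous Neumann condition in (A3)) is non-increasing along the solutions, so that $E^\eta_\epsilon(t)\le E^\eta_\epsilon(0)$ for a.e.\ $t\in[0,T]$. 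Unlike in \cite{Both14,Bha22}, the whole issue is to verify that every constant appearing below is independent of \emph{both} $\eta$ and $\epsilon$.

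The first step is to bound $E^\eta_\epsilon(0)$ uniformly. The entropy part at $t=0$ is harmless: since $c^0_i\in C^2(\overline\Omega)$ is bounded by (A4) and $\eta$ stays in a bounded range (only $\eta\to0$ is relevant), $\sum_i\int_{\Omega_\epsilon}\big(c^0_i\log c^0_i-c^0_i+\tfrac{\eta}{p-1}(c^0_i)^p\big)dx\le C|\Omega|$. For the electrostatic part, note that \eqref{weak_Poisson} together with \eqref{Exist_2} holds at $t=0$ with datum $\sum_i z_i c^0_i$; testing it with $\psi_2=\phi^\eta_\epsilon(0)$ gives $\|\nabla\phi^\eta_\epsilon(0)\|_{L^2(\Omega_\epsilon)}^2=\int_{\Omega_\epsilon}\sum_i z_i c^0_i\,\phi^\eta_\epsilon(0)\,dx+\int_{\partial\Omega_\epsilon}\xi_\epsilon\phi^\eta_\epsilon(0)\,dS$. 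The volume term is $\le C\|\phi^\eta_\epsilon(0)\|_{L^2(\Omega_\epsilon)}$; the boundary term is split over $\Gamma_\epsilon$ and $\partial\Omega$ and estimated by the scaled trace inequality $\|u\|_{L^2(\Gamma_\epsilon)}^2\le C\big(\epsilon^{-1}\|u\|_{L^2(\Omega_\epsilon)}^2+\epsilon\|\nabla u\|_{L^2(\Omega_\epsilon)}^2\big)$, whose $\epsilon^{-1}$ loss is exactly compensated by the prefactor $\epsilon$ in the definition of $\xi_\epsilon$ on $\Gamma_\epsilon$ in (A3), so that $\|\xi_\epsilon\|_{L^2(\Gamma_\epsilon)}=O(\epsilon^{1/2})$. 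Invoking the $\epsilon$-uniform Poincaré inequality for zero-mean functions on the connected perforated domain $\Omega_\epsilon$, all these terms are $\le C\|\nabla\phi^\eta_\epsilon(0)\|_{L^2(\Omega_\epsilon)}$, and absorbing yields $\|\nabla\phi^\eta_\epsilon(0)\|_{L^2(\Omega_\epsilon)}\le C$, hence $E^\eta_\epsilon(0)\le C$, with $C$ independent of $\eta,\epsilon$. The same trace estimate bounds any boundary term in $E^\eta_\epsilon(t)$ itself by $C\|\nabla\phi^\eta_\epsilon(t)\|_{L^2(\Omega_\epsilon)}$.

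The second step decouples the two desired bounds from $E^\eta_\epsilon(t)\le C$ via the elementary inequality $r\log r-r\ge-1$ for $r\ge0$ (and $\tfrac{\eta}{p-1}r^p\ge0$), which makes the entropy part bounded below by $-P|\Omega|$. Discarding it, and absorbing the boundary contribution by Young's inequality as above, gives $\tfrac12\|\nabla\phi^\eta_\epsilon(t)\|_{L^2(\Omega_\epsilon)}^2\le C$; combined with Poincaré this is the claimed bound $\|\phi^\eta_\epsilon\|_{L^\infty(0,T;H^1(\Omega_\epsilon))}\le C$. Conversely, discarding now the controlled term $\tfrac12\|\nabla\phi^\eta_\epsilon\|^2$ and the nonnegative $\eta$-term from $E^\eta_\epsilon(t)\le C$ yields $\sum_i\int_{\Omega_\epsilon}\big(c^\eta_{i,\epsilon}\log c^\eta_{i,\epsilon}-c^\eta_{i,\epsilon}\big)dx\le C$; by the conservation of mass of Lemma~\ref{lemma_conv_mass}, $\sum_i\int_{\Omega_\epsilon}c^\eta_{i,\epsilon}\,dx=\sum_i\int_{\Omega_\epsilon}c^0_i\,dx\le C$, whence $\sum_i\int_{\Omega_\epsilon}c^\eta_{i,\epsilon}\log c^\eta_{i,\epsilon}\,dx\le C$. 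Since each summand is bounded below by $-|\Omega|/e$, each one is in fact bounded above by $C$; splitting $c\log c$ into its positive and negative parts (the negative part being pointwise $\le1/e$) then gives $\int_{\Omega_\epsilon}|c^\eta_{i,\epsilon}\log c^\eta_{i,\epsilon}|\,dx\le C$ for each $i$, uniformly in $\eta,\epsilon$, as desired.

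The main obstacle is the uniform-in-$\epsilon$ control in the second step: it rests entirely on the $\epsilon$-uniform Poincaré inequality on $\Omega_\epsilon$ (available because $\Omega_\epsilon$ is connected and admits a uniformly bounded extension operator) and on the scaled trace inequality over $\Gamma_\epsilon$, together with the matching of the $\epsilon$-powers that is built into assumption (A3). Once these two $\epsilon$-uniform inequalities are in place, the remainder of the argument is soft energy bookkeeping, with the only nontrivial analytic input being the lower bound $r\log r-r\ge-1$ and the conservation of mass.
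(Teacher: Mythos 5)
Your proposal is correct and takes essentially the same route as the paper: both rest on the monotonicity of the Bothe--Fischer--Pierre--Rolland energy functional, the elementary bound $r\log r - r + 1 \geq 0$, conservation of mass, and the $\epsilon$-uniform Poincar\'e inequality. The only difference is presentational — the paper simply invokes \cite[Proposition 3.1]{Bha22} with $\alpha=\beta=0$, $\lambda=1$ after noting that its constant is independent of $\eta$ and $\epsilon$, whereas you re-derive the content of that proposition (the uniform bound on the initial energy and the absorption of the boundary term via the scaled trace inequality), which is a valid, self-contained substitute.
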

		\begin{proof}
Note that, since $\displaystyle \lim_{x \to 0^+} x \log x $ exists, by continuity the domain of $x\log x$ can be extended to $[0,\infty)$. This makes $c^\eta_{i, \epsilon}  \log c^\eta_{i, \epsilon} $ well-defined, even if $c^\eta_{i, \epsilon}$ vanishes. To be more precise, the term $c_{i,\epsilon}^\eta \log c_{i,\epsilon}^\eta$ was obtained as an a.e. limit of $(c_{i,\epsilon}^\eta + \delta_m)\log (c_{i,\epsilon}^\eta + \delta_m)$ as $m \to \infty$, where $\delta_m$ is a sequence of positive numbers converging to $0$ (see \cite[Proposition 3.1]{Bha22}. Hence, if $c_{i,\epsilon}^\eta =0$, we have
\begin{equation*}
 c_{i,\epsilon}^\eta \log c_{i,\epsilon}^\eta = \displaystyle \lim_{m \to \infty} \delta_m \log \delta_m =0.
\end{equation*} 

Now, let us prove the desired estimates. We observe that the constant $C>0$ in the statement of \cite[Proposition 3.1]{Bha22} is independent of $\eta$, $\epsilon$ and $\lambda$. Now, choosing $\alpha = \beta = 0$, $\lambda = 1$ in \cite[Proposition 3.1]{Bha22}, we get 
\begin{equation}\label{log_est_1}
\esssup_{t \in (0,T)} \int_{\Oe} c^\eta_{i, \epsilon} (t,x) \log c^\eta_{i, \epsilon} (t,x) - c^\eta_{i, \epsilon} (t,x) + 1 \,dx \leq C,
\end{equation}
where $C>0$ is independent of $\eta$ and $\epsilon$. Now, using $r \log r -r +1 \geq 0$, for $r \geq 0$, we estimate: 
\begin{eqnarray}
&&\int_{\Oe} \left|c^\eta_{i, \epsilon} (t,x) \log c^\eta_{i, \epsilon} (t,x)\right| \,dx \nonumber \\
&& \leq \int_{\Oe} \left|c^\eta_{i, \epsilon} (t,x) \log c^\eta_{i, \epsilon} (t,x) -c^\eta_{i, \epsilon}(t,x) +1 \right| \,dx + \int_{\Oe} \left|c^\eta_{i, \epsilon} (t,x) - 1 \right| \,dx \nonumber \\ \nonumber \\
&& \leq \int_{\Oe} c^\eta_{i, \epsilon} (t,x) \log c^\eta_{i, \epsilon} (t,x) -c^\eta_{i, \epsilon} (t,x)+1  \,dx + \int_{\Oe} c^\eta_{i, \epsilon} (t,x) \,dx + |\Omega| \nonumber.
\end{eqnarray}
Consequently, (\ref{log_est_1}) and Lemma \ref{lemma_conv_mass} prove that $\displaystyle \esssup_{t \in (0,T)} \int_{\Oe} \left|c^\eta_{i, \epsilon} (t,x) \log c^\eta_{i, \epsilon} (t,x)\right| \,dx \leq C$. 

Next, let us show that the Dirichlet energy of the electric potential, $\displaystyle \esssup_{t \in (0,T)} \int_{\Oe} \left|\nabla \phi^\eta_{\epsilon} (t,x) \right| ^2 \,dx$, is bounded by a constant $C>0$ independent of $\eta$ and $\epsilon$. Indeed, this follows immediately by noting that $r\log r -r +1 \geq 0$, for $r \geq 0$, and choosing $\alpha = \beta = 0$, $\lambda = 1$ in \cite[Proposition 3.1]{Bha22}. Consequently, by the Poincaré inequality for porous medium (Lemma \ref{lemma_mean_value}), we get $\left \lVert  \phi^\eta_{\epsilon} \right \rVert_{L^\infty (0,T;H^1(\Omega_\epsilon))} \leq C$. This completes the proof. 
		\end{proof}
	\begin{lemma}\label{uni_est_prop_st}
There exists a constant $C >0$ independent of $\eta$ and $\epsilon$ such that 
\begin{equation*}
\bigintss_{0}^{T} \bigintss_{\Omega_\epsilon} \frac{\left | \nabla c^\eta_{i,\epsilon }\right |^2 }{c^\eta_{i,\epsilon }  + 1} + \frac{\left | \eta \nabla \left (c^\eta_{i, \epsilon} \right) ^p + z_i c^\eta_{i,\epsilon } \nabla \phi^\eta_{\epsilon}\right|^2}{c^\eta_{i,\epsilon }+1} + \eta \left | \nabla \left (c^\eta_{i,\epsilon }\right)^{\frac{p}{2}}\right |^2 + |\Delta \phi^\eta_{\epsilon}|^2  \,dx \,dt \leq C. 
\end{equation*}
Here, $p$ is the exponent in the non-linear diffusion $h^\eta_p$ (see (\ref{def_h})) appearing in the equation (\ref{Exist_1}).
	\end{lemma}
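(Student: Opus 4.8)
The plan is to extract the four quantities from the energy--dissipation inequality for the app-PNP system, the same one underlying Lemma~\ref{log_est_st}. With the chemical potential $\mu^\eta_{i,\epsilon}:=\log c^\eta_{i,\epsilon}+\tfrac{p\eta}{p-1}(c^\eta_{i,\epsilon})^{p-1}+z_i\phi^\eta_\epsilon$ — the natural test function in \eqref{Exist_1} — one has the pointwise identity $c^\eta_{i,\epsilon}\nabla\mu^\eta_{i,\epsilon}=\nabla c^\eta_{i,\epsilon}+\eta\nabla(c^\eta_{i,\epsilon})^p+z_ic^\eta_{i,\epsilon}\nabla\phi^\eta_\epsilon$, and \cite[Proposition~3.1]{Bha22} with $\alpha=\beta=0$, $\lambda=1$ (whose constant is independent of $\eta,\epsilon$, as recalled in the proof of Lemma~\ref{log_est_st}) yields
\[
\mathcal E^\eta_\epsilon(t)+\int_0^t\sum_{i=1}^P\int_{\Omega_\epsilon}D_ic^\eta_{i,\epsilon}\,|\nabla\mu^\eta_{i,\epsilon}|^2\,dx\,ds\ \le\ \mathcal E^\eta_\epsilon(0),\qquad \mathcal E^\eta_\epsilon(t):=\sum_{i=1}^P\int_{\Omega_\epsilon}\!\Big(c^\eta_{i,\epsilon}\log c^\eta_{i,\epsilon}-c^\eta_{i,\epsilon}+1+\tfrac{\eta}{p-1}(c^\eta_{i,\epsilon})^p\Big)dx+\tfrac12\!\int_{\Omega_\epsilon}\!|\nabla\phi^\eta_\epsilon|^2dx
\]
(if that proposition is phrased only as monotonicity of $\mathcal E^\eta_\epsilon$, the identity is recovered by testing \eqref{Exist_1} with $\mu^\eta_{i,\epsilon}$ through the regularisation used there). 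Since each summand of $\mathcal E^\eta_\epsilon$ is nonnegative, $\mathcal E^\eta_\epsilon(t)\ge0$, while $\mathcal E^\eta_\epsilon(0)\le C$ uniformly in $\epsilon$ and in $\eta$ (for $\eta$ in a bounded range, the only regime relevant for the approximation), using (A4) together with a uniform bound on $\|\nabla\phi^\eta_\epsilon(0)\|_{L^2(\Omega_\epsilon)}$ obtained by testing Poisson's equation at $t=0$ with $\phi^\eta_\epsilon(0)$ (invoking the Poincaré inequality of Lemma~\ref{lemma_mean_value}, the scaled trace inequality on $\Gamma_\epsilon$, the perforated-domain extension operator near $\partial\Omega$, and $|\xi_\epsilon|=O(\epsilon)$ on $\Gamma_\epsilon$). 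Consequently, using $D_i\ge m$ from (A2), $\sum_i\int_0^T\!\int_{\Omega_\epsilon}c^\eta_{i,\epsilon}|\nabla\mu^\eta_{i,\epsilon}|^2$, i.e.\ $\sum_i\int_0^T\!\int_{\Omega_\epsilon}(c^\eta_{i,\epsilon})^{-1}|\nabla c^\eta_{i,\epsilon}+\eta\nabla(c^\eta_{i,\epsilon})^p+z_ic^\eta_{i,\epsilon}\nabla\phi^\eta_\epsilon|^2$, is bounded by a constant independent of $\eta,\epsilon$.

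Next I expand this integrand as $(c^\eta_{i,\epsilon})^{-1}|a+b|^2$ with $a=\nabla c^\eta_{i,\epsilon}$, $b=\eta\nabla(c^\eta_{i,\epsilon})^p+z_ic^\eta_{i,\epsilon}\nabla\phi^\eta_\epsilon$; a short computation gives $(c^\eta_{i,\epsilon})^{-1}a\cdot b=\tfrac{4\eta}{p}|\nabla(c^\eta_{i,\epsilon})^{p/2}|^2+z_i\nabla c^\eta_{i,\epsilon}\cdot\nabla\phi^\eta_\epsilon$, the first term being nonnegative. Replacing $c^\eta_{i,\epsilon}$ by $c^\eta_{i,\epsilon}+1$ in the two purely quadratic terms (which only decreases them), the bound of the previous paragraph becomes
\[
\sum_{i=1}^P\int_0^T\!\!\int_{\Omega_\epsilon}\!\Big(\frac{|\nabla c^\eta_{i,\epsilon}|^2}{c^\eta_{i,\epsilon}+1}+\tfrac{8\eta}{p}\big|\nabla(c^\eta_{i,\epsilon})^{p/2}\big|^2+\frac{|\eta\nabla(c^\eta_{i,\epsilon})^p+z_ic^\eta_{i,\epsilon}\nabla\phi^\eta_\epsilon|^2}{c^\eta_{i,\epsilon}+1}\Big)dx\,dt\ +\ 2\sum_{i=1}^P z_i\!\int_0^T\!\!\int_{\Omega_\epsilon}\!\nabla c^\eta_{i,\epsilon}\cdot\nabla\phi^\eta_\epsilon\,dx\,dt\ \le\ C .
\]

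The decisive step is the cross term, which I do \emph{not} treat species by species. Integrating by parts and using Poisson's equation $-\Delta\phi^\eta_\epsilon=\sum_j z_jc^\eta_{j,\epsilon}$ together with $\nabla\phi^\eta_\epsilon\cdot\nu_\epsilon=\xi_\epsilon$,
\[
\sum_{i=1}^P z_i\int_{\Omega_\epsilon}\nabla c^\eta_{i,\epsilon}\cdot\nabla\phi^\eta_\epsilon\,dx=\int_{\Omega_\epsilon}\Big(\sum_{i}z_ic^\eta_{i,\epsilon}\Big)^2dx+\sum_{i} z_i\int_{\partial\Omega_\epsilon}c^\eta_{i,\epsilon}\,\xi_\epsilon\,dS=\int_{\Omega_\epsilon}|\Delta\phi^\eta_\epsilon|^2\,dx+\sum_{i}z_i\int_{\partial\Omega_\epsilon}c^\eta_{i,\epsilon}\xi_\epsilon\,dS ,
\]
so the cross term contributes precisely $2\int_0^T\!\int_{\Omega_\epsilon}|\Delta\phi^\eta_\epsilon|^2$ — with a favourable sign, exactly the fourth quantity — plus a boundary remainder. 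The remainder is controlled by mass conservation (Lemma~\ref{lemma_conv_mass}) and the first quantity on the left: on $\Gamma_\epsilon$ one uses $|\xi_\epsilon|\le C\epsilon$ and the scaled trace inequality, on $\partial\Omega$ the extension operator and $W^{1,1}(\Omega)\hookrightarrow L^1(\partial\Omega)$, in each case factoring $|\nabla c^\eta_{i,\epsilon}|=\frac{|\nabla c^\eta_{i,\epsilon}|}{\sqrt{c^\eta_{i,\epsilon}+1}}\cdot\sqrt{c^\eta_{i,\epsilon}+1}$ and applying Cauchy--Schwarz; this bounds the remainder by $C\big(1+\big(\sum_i\int_0^T\!\int_{\Omega_\epsilon}|\nabla c^\eta_{i,\epsilon}|^2/(c^\eta_{i,\epsilon}+1)\big)^{1/2}\big)$. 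Feeding the identity and this estimate into the last display, all four quantities of the statement stand on the left with positive coefficients, while the right-hand side, after Young's inequality, is at most $\tfrac12\sum_i\int_0^T\!\int_{\Omega_\epsilon}|\nabla c^\eta_{i,\epsilon}|^2/(c^\eta_{i,\epsilon}+1)+C$; absorbing that term — admissible since all four integrals are finite for fixed $\eta,\epsilon$ by Proposition~\ref{prop_exist_app_PNP} — gives the claim summed over $i$, hence for each fixed $i$. I expect the cross term to be the only genuine obstacle: treated naively by Young's inequality it would require a bound on $\int_0^T\!\int_{\Omega_\epsilon}(c^\eta_{i,\epsilon}+1)|\nabla\phi^\eta_\epsilon|^2$, which is unavailable here, and the fix is to sum over species \emph{before} integrating by parts so that Poisson's equation collapses $\sum_i z_i\nabla c^\eta_{i,\epsilon}\cdot\nabla\phi^\eta_\epsilon$ onto $|\Delta\phi^\eta_\epsilon|^2$ plus a trace term; the remaining ingredients — the $\eta,\epsilon$-uniform bound on $\mathcal E^\eta_\epsilon(0)$ and on the two boundary integrals — are routine porous-media trace and extension estimates.
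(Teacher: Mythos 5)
Your proposal is correct and follows essentially the same route as the paper's proof in the Appendix: it starts from the dissipation bound of \cite[Proposition 3.1]{Bha22}, expands the quadratic to isolate the four quantities plus the cross term $2z_i\nabla c^\eta_{i,\epsilon}\cdot\nabla\phi^\eta_\epsilon$, sums over species and integrates by parts via Poisson's equation so that the cross term yields $|\Delta\phi^\eta_\epsilon|^2$ with the good sign plus a boundary remainder, and controls that remainder with the scaled trace inequality on $\Gamma_\epsilon$, the extension operator near $\partial\Omega$, and conservation of mass before absorbing into the left-hand side. The only cosmetic differences are that the paper invokes the $L^2$ trace inequality applied to $\sqrt{c^\eta_{i,\epsilon}+1}$ (absorbing via small $\epsilon$ and $\delta$) where you use an $L^1$ trace with Cauchy--Schwarz and Young, and that the paper is more explicit about the set $\{c^\eta_{i,\epsilon}=0\}$ where the dissipation integrand is defined as a limit.
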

The proof of Lemma \ref{uni_est_prop_st} for a fixed $\epsilon >0$ can be found in \cite[Lemma 6]{Both14}. However, one can also obtain this estimate independent of $\epsilon$ by using arguments from the proof of  \cite[Lemma 6]{Both14}, together with the applications of extension operator (Lemma \ref{lemma_extension}) and trace inequality for porous medium (Lemma \ref{TraceInequalityLemma}). For completeness, we give the proof in Appendix. 
\begin{proposition}\label{uni_est_thm_1_st}
There exists a constant $C >0$ independent of $\eta$ and $\epsilon$ such that 
\begin{equation*}
\left\lVert\sqrt{c^\eta_{i,\epsilon }+1}\right\rVert_{L^\infty(0,T;L^2(\Omega_\epsilon))} + \left\lVert\sqrt{c^\eta_{i,\epsilon }+1}\right\rVert_{L^{\frac{10}{3}}((0,T) \times \Omega_\epsilon)} +  \left\lVert\nabla c^\eta_{i,\epsilon }\right\rVert_{L^{\frac{5}{4}}((0,T) \times \Omega_\epsilon)} \leq C.
\end{equation*}
\end{proposition}

\begin{proof}
The bound for $\left\lVert\sqrt{c^\eta_{i,\epsilon }+1}\right\rVert_{L^\infty(0,T;L^2(\Omega_\epsilon))} $ directly follows from the conservation of mass (Lemma \ref{lemma_conv_mass}). 

Consequently, using 
	$\nabla \sqrt{c^\eta_{i, \epsilon} + 1} = \frac{1}{2 \sqrt{c^\eta _{i, \epsilon} + 1}} \nabla c^\eta_{i, \epsilon}$
and Lemma \ref{uni_est_prop_st}, we obtain 
\begin{equation}\label{uni_est_thm_1}
\left\lVert\sqrt{c^\eta_{i,\epsilon }+1}\right\rVert_{L^\infty(0,T;L^2(\Omega_\epsilon))} + \left\lVert\mathlarger{\nabla}\sqrt{c^\eta_{i,\epsilon }+1}\right\rVert_{L^2(0,T;L^2(\Omega_\epsilon))}  \leq C. 
\end{equation}
Now, utilizing (\ref{uni_est_thm_1}), the embedding \cite[(3.8), p. 77]{Lad88} and the properties of the extension operator (Lemma \ref{lemma_extension}), we get the required bound for the second term of the theorem:
\begin{equation}\label{uni_est_thm_2}
\left\lVert\sqrt{c^\eta_{i,\epsilon }+1}\right\rVert_{L^{\frac{10}{3}}((0,T) \times \Omega_\epsilon)} \leq C.
\end{equation}
Finally, let $\lambda, r, s > 0$ such that $\frac{r}{\lambda}, \frac{s}{\lambda} \in (1, \infty)$ and $\frac{1}{r} + \frac{1}{s} = \frac{1}{\lambda}$. We make use of Hölder's inequality to get that 
\begin{eqnarray*}
&&\int_{0}^{T} \int_{\Omega_\epsilon} \left | \nabla c^\eta_{i,\epsilon }\right  | ^{\lambda}  \,dx \,dt = \bigintss_{0}^{T} \bigintss_{\Omega_\epsilon} \left | \frac{ \nabla c^\eta_{i,\epsilon }}{\sqrt{c^\eta_{i,\epsilon }+1}}\right | ^{\lambda} \left | \sqrt{c^\eta_{i,\epsilon }+1}\right | ^{\lambda} \,dx \,dt \\
&& \leq \left (\bigintss_{0}^{T} \bigintss_{\Omega_\epsilon} \left | \frac{ \nabla c^\eta_{i,\epsilon }}{\sqrt{c^\eta_{i,\epsilon }+1}}\right | ^{r} \,dx \,dt \right )^{\frac{\lambda}{r}}  \left (\int_{0}^{T} \int_{\Omega_\epsilon} \left | \sqrt{c^\eta_{i,\epsilon }+1}\right | ^{s}\,dx \,dt \right )^{\frac{\lambda}{s}}.
\end{eqnarray*}
Thanks to Lemma \ref{uni_est_prop_st} and (\ref{uni_est_thm_2}), we can choose $r=2, s=\frac{10}{3}$. This gives $\frac{1}{\lambda} = \frac{1}{2}+ \frac{3}{10} = \frac{4}{5}$. 
\end{proof}
\subsection{Estimates for microscopic PNP}
\begin{proposition}\label{uni_est_thm_2_st}
	There exists a constant $C> 0$ independent of $\epsilon$ such that 
	\begin{equation*}
		\lVert c_{i,\epsilon} \rVert_{L^{\frac{5}{3}}((0,T)\times\Omega_\epsilon)} + \lVert \nabla c_{i,\epsilon} \rVert_{L^{\frac{5}{4}}((0,T)\times\Omega_\epsilon)} + \lVert  \phi_{\epsilon} \rVert_{L^\infty(0,T;H^1(\Omega_\epsilon))} \leq C.
	\end{equation*}
\end{proposition}
\begin{proof}
	By Lemma \ref{conv_approx_micro}, we have that, up to a subsequence,  $c^ \eta_{i, \epsilon}$ converges to $c_{i, \epsilon}$ strongly in $L^1((0,T) \times \Omega_\epsilon)$ and $\phi_\epsilon^\eta$ converges to $\phi_\epsilon$ weakly in $L^q(0,T;W^{1,2}(\Omega_{\epsilon}))$, $q \in (1, \infty)$, as $\eta \to 0$. In what follows, we only consider the subsequence of $\eta$ (still indexed by $\eta$) along which this convergence holds. Again, Proposition \ref{uni_est_thm_1_st} guarantees that $\lVert c^\eta_{i, \epsilon} \rVert_{L^{\frac{5}{3}}((0,T)\times\Omega_\epsilon)} \leq C$. Consequently, $c^\eta_{i, \epsilon}$ converges to $c_{i, \epsilon}$ weakly in $L^{\frac{5}{3}}((0,T)\times\Omega_\epsilon)$. Now, by the lower semi-continuity of norm with respect to weak topology, we obtain the required estimate: $ \lVert c_{i,\epsilon} \rVert_{L^{\frac{5}{3}}((0,T)\times\Omega_\epsilon)} \leq C$. 
	
	Now, let us prove the bound for the second term of this theorem. From Proposition \ref{uni_est_thm_1_st}, we have that $\left\lVert\nabla c^\eta_{i,\epsilon }\right\rVert_{L^{\frac{5}{4}}((0,T) \times \Omega_\epsilon)} \leq C.$ Hence, $\nabla c^\eta_{i,\epsilon }$ converges to $\nabla c_{i, \epsilon}$ weakly in $L^{\frac{5}{4}}((0,T) \times \Omega_\epsilon)^n$ and $\left\lVert \nabla c_{i, \epsilon} \right\rVert_{L^{\frac{5}{4}}((0,T) \times \Omega_\epsilon)} \leq C.$  
	
Now, it remains to estimate $\phi_{\epsilon}$. Note that, by Lemma \ref{log_est_st}, we have $\left\lVert \phi^\eta_{\epsilon}\right\rVert_{L^\infty (0,T;H^1(\Omega_\epsilon))} \leq C$. Therefore, as $\eta \to 0$, $\phi^\eta_{\epsilon}$ converges to $\phi_{\epsilon}$ in $L^\infty(0,T,H^1(\Omega_\epsilon))$ in the weak$^\ast$ topology \cite[Corollary 3.30]{Bre11}. Finally, the lower semi-continuity of norm with respect to weak$^\ast$ topology  \cite[Proposition 3.13]{Bre11} yields that $\lVert \phi_{\epsilon}\rVert_{L^\infty (0,T;H^1(\Omega_\epsilon))} \leq C$.
\end{proof}

\subsection{Estimates for microscopic app-PNP under cut-off functions}\label{subsec_cut_off}
Next, we will construct the cut-off functions and establish some properties of them. These cut-off functions play an essential role in obtaining the strong convergence of the microscopic concentrations (see Theorem \ref{thm_st_conv_micro_1}).

Let $K>1$. Let $G_K: \R \to \R$ be a smooth function with the following properties:
\begin{equation}\label{cut_off_1}
G_K (r) =r \ \text{if} \  r \in  \left[-\frac{1}{2},K\right], \  G_K (r) =0 \ \text{if} \  r \in (-\infty,-1) \cup (2K +4, \infty)
\end{equation}
Moreover, $G_K$ satisfies the bounds: 
\begin{equation} \label{cut_off_2}
G_K (r) \leq K+1 \ \forall r \in [0, \infty), \ G_K(r)\in [-1,0] \ \forall r \in (-\infty, 0], \ |G^\prime_K (r)| +  |G^{\prime \prime}_K (r)| \leq C \ \forall r \in \R,
\end{equation}
where $C>0$ is independent of $K$. 

For example, such a $G_K$ can be constructed as follows. Consider the function $F_K: [0, \infty) \to [0, \infty)$ defined as 
	\begin{flalign*}
	F_K(r)=
	\begin{cases}
	r &\text{ if $0\leq r<K+1$,} \\
		K+1  &\text{ if $K+1 \leq r \leq K+2$,} \\
		-r +2K +3 &\text{ if $K+2<r\leq2K+3$,}\\
		0  &\text{ if $r > 2K+3$}.
	\end{cases}
\end{flalign*}
Note that the graph of $F_K$ is the union of four straight lines with slopes $1, 0, -1$ and $0$. Now, the function $G_K (r)$, for $r \geq 0$, can be obtained by mollifying the corners in the graph of $F_K$. Now, to extend the domain of $G_K$ to the negative real line, we consider the function $f(r) = r$, starting from $r = 0$ to $r = -\frac{1}{2}$, and then smoothly merge its graph with the negative real line to get $G_K (r) =0$ for $r <-1$, as desired.   

Denote $V_\epsilon = H^1(\Omega_\epsilon) \cap L^\infty(\Omega_\epsilon)$. Note that $V_\epsilon$ is a Banach space equipped with the norm 
$$
\lVert . \rVert_{V_\epsilon} := \max \{\lVert . \rVert_{H^1(\Omega_\epsilon)}, \lVert . \rVert_{L^\infty(\Omega_\epsilon)}\};
$$
see \cite[p. 1134]{Kal03} if necessary. 
\begin{proposition}\label{th_st_cut_off}
For any $K>1$, the function $G_K$ satisfies the following properties:
\begin{enumerate}
	[label = (\roman*)]
	\item $\partial_{t} G_K\left(c^\eta_{i,\epsilon}\right)  \in L^1 (0,T; V_\epsilon^\prime)$ with 
	\begin{equation}\label{cut_off_st_(i)}
\left\langle \partial_{t} G_K\left(c^\eta_{i,\epsilon}(t)\right), \upsilon \right \rangle_{V_\epsilon^\prime, V_\epsilon} = \left\langle \partial_{t} c^\eta_{i,\epsilon}(t),G_K^\prime \left(c^\eta_{i,\epsilon}(t)\right) \upsilon \right \rangle_{H^1(\Omega_\epsilon)^\prime, H^1(\Omega_\epsilon)} 
	\end{equation}
	for a.e. $t \in (0,T)$ and for all $\upsilon \in V_\epsilon$;
		\item $\left\lVert G_K\left(c^\eta_{i,\epsilon}\right)\right \rVert_{L^2(0,T; V_\epsilon)}\leq  C (K+1+\sqrt{2K+5})$, where $C>0$ is a constant independent of $\eta$ and $\epsilon$;
		\item $\left\lVert \partial_{t} G_K\left(c^\eta_{i,\epsilon}\right)\right \rVert_{L^1 (0,T; V_\epsilon^\prime)}\leq C (2K+4)^{p-1} (2K+5)$, where $C>0$ is a constant independent of $\eta$ and $\epsilon$;
	\item
	\begin{equation*}
	 \int_{t_1}^{t_2} \left\langle \partial_{t} G_K\left(c^\eta_{i,\epsilon}(t)\right), \upsilon \right \rangle_{V_\epsilon^\prime, V_\epsilon} \,dt =  \int_{\Omega_\epsilon} \left(G_K\left(c^\eta_{i,\epsilon}(t_2,x)\right) - G_K\left(c^\eta_{i,\epsilon}(t_1,x)\right)\right)\upsilon (x) \,dx 
	 	\end{equation*} 
	for all $(t_1,t_2) \in [0,T]$ with $t_1 \leq t_2$ and $\upsilon \in V_\epsilon$.
\end{enumerate}
\end{proposition}
\begin{proof}
(i) Let 
\begin{equation}\label{def_lambda}
\langle \Lambda (t) , \upsilon \rangle_{V_\epsilon^\prime, V_\epsilon} :=  \left\langle \partial_{t} c^\eta_{i,\epsilon}(t),G_K^\prime \left(c^\eta_{i,\epsilon}(t)\right) \upsilon \right \rangle_{H^1(\Omega_\epsilon)^\prime, H^1(\Omega_\epsilon)}  \ \forall \upsilon \in V_\epsilon. 
\end{equation}
First, it should be noted that $\Lambda : (0,T) \to V_\epsilon ^\prime$ is strongly measurable. The proof is added in Remark \ref{Rem_meas} in Appendix. 

Next, let us prove (\ref{cut_off_st_(i)}) by density argument. Recall from Proposition \ref{prop_exist_app_PNP} that $c^\eta_{i,\epsilon} \in L^2(0,T;H^1(\Omega_\epsilon))$ and $\partial_{t}c^\eta_{i,\epsilon} \in L^2(0,T;H^1(\Omega_\epsilon)^\prime)$. Therefore, by \cite[Proposition 23.23, (iii)]{Zei90}, there exists a sequence $\rho_m$ in $C^\infty([0,T] \times \overline{\Omega}_\epsilon)$ such that
\begin{eqnarray*} 
	\rho_m \rightarrow c^\eta_{i,\epsilon} \ \text{in $L^2(0,T;H^1(\Omega_\epsilon))$ and} \ \partial_{t}\rho_m \rightarrow \partial_{t}c^\eta_{i,\epsilon} \ \text{in $L^2(0,T;H^1(\Omega_\epsilon)^\prime)$ as $m \to \infty$.}
\end{eqnarray*} 
Let $\upsilon \in V_\epsilon$ and $\psi \in C_0^\infty (0,T)$. Then, up to a subsequence of $\rho_m$, 
\begin{eqnarray}\label{weak_time_deri_calcu}
&& \int_{0}^{T} \left<G_K\left(c^\eta_{i,\epsilon}(t)\right), \upsilon \right>_{V_\epsilon^\prime, V_\epsilon} \psi^\prime (t)\,dt\nonumber\\
&&=\int_{0}^{T} \int_{\Omega_\epsilon} G_K\left(c^\eta_{i,\epsilon}(t,x)\right) \upsilon (x) \psi^\prime (t) \,dx \,dt \nonumber \\
&&= \lim_{m \to \infty} \int_{0}^{T} \int_{\Omega_\epsilon} G_K\left(\rho_m(t,x)\right) \upsilon (x) \psi^\prime (t) \,dx \,dt \nonumber \\
&&=- \lim_{m \to \infty} \int_{0}^{T} \int_{\Omega_\epsilon} G^\prime_K\left(\rho_m(t,x)\right) \partial_{t}\rho_m(t,x)\upsilon (x) \psi(t) \,dx \,dt \nonumber \\
&& = - \int_{0}^{T} \left \langle \partial_{t} c^\eta_{i,\epsilon} (t), G_K^\prime \left(c^\eta_{i,\epsilon}(t)\right) \upsilon \right \rangle_{H^1(\Omega_\epsilon)^\prime, H^1(\Omega_\epsilon)} \psi (t)\,dt \nonumber\\
&&= - \int_{0}^{T} \langle \Lambda (t) , \upsilon \rangle_{V_\epsilon^\prime, V_\epsilon} \psi(t) \,dt.
\end{eqnarray} 
To obtain the fourth equality, we used the fact that $G^\prime_K(\rho_m)\upsilon \psi$ converges weakly to $G_K^\prime(c^\eta_{i,\epsilon}) \upsilon \psi $ in $L^2(0,T;H^1(\Omega_\epsilon))$ (by the generalized dominated convergence theorem \cite[Exercise 20, p. 59]{Fol99}) and $\partial_{t}\rho_m \rightarrow \partial_{t}c^\eta_{i,\epsilon}$ in $L^2(0,T;H^1(\Omega_\epsilon)^\prime)$. Now, (\ref{weak_time_deri_calcu}) implies that $\Lambda = \partial_t G_K\left(c^\eta_{i.\epsilon}\right)$. Consequently, the definition of $\Lambda$ (see \ref{def_lambda}) gives (\ref{cut_off_st_(i)}).

(ii) First, we claim that $t \mapsto \left\lVert G_K\left(c^\eta_{i,\epsilon}(t)\right)\right \rVert_{V_\epsilon}$ is Lebesgue measurable on $(0,T)$. Clearly, $t \mapsto \left\lVert G_K\left(c^\eta_{i,\epsilon}(t)\right)\right \rVert_{H^1(\Omega_\epsilon)}$ is Lebesgue measurable. Also, thanks to \cite[p. 619, Lemma 12.2.2]{Fat99}, we have that $t \mapsto \left\lVert G_K\left(c^\eta_{i,\epsilon}(t)\right)\right \rVert_{L^\infty(\Omega_\epsilon)}$ is Lebesgue measurable. This proves our claim. 

 Next, let us estimate $\left\lVert G_K\left(c^\eta_{i,\epsilon}\right)\right \rVert_{L^2(0,T; V_\epsilon)}$. We have
\begin{eqnarray}\label{th_cut_off_1}
&&\left\lVert G_K\left(c^\eta_{i,\epsilon}\right)\right \rVert_{L^2(0,T; V_\epsilon)}\nonumber\\
&& \leq \left(\int_{0}^{T} \left \lVert G_K\left(c^\eta_{i,\epsilon}(t)\right)\right \rVert_{L^\infty(\Omega_\epsilon)} ^2 \,dt \right)^{\frac{1}{2}} + \left(\int_{0}^{T} \left \lVert G_K\left(c^\eta_{i,\epsilon}(t)\right)\right \rVert_{L^2(\Omega_\epsilon)} ^2 \,dt \right)^{\frac{1}{2}}\nonumber\\
&&+ \left(\int_{0}^{T} \left \lVert \nabla G_K\left(c^\eta_{i,\epsilon}(t)\right)\right \rVert_{L^2(\Omega_\epsilon)} ^2 \,dt \right)^{\frac{1}{2}}.
\end{eqnarray}
Since $0 \leq G_K(r) \leq K+1 \ \forall r \in [0,\infty)$, the first two terms in the RHS of (\ref{th_cut_off_1}) are bounded by $C(K+1)$, where $C>0$ is independent of $\eta$ and $\epsilon$. 

To estimate the last term of (\ref{th_cut_off_1}), we use that $G_K^\prime$ is a bounded function, $G_K (r)$ vanishes if $r >2K+4$ and Lemma \ref{uni_est_prop_st} as follows. Let  $\{c^\eta_{i,\epsilon} \leq 2K+4\}$ denote the set ${\{(t,x)\in (0,T) \times \Omega_\epsilon: c^\eta_{i,\epsilon} (t,x)\leq 2K+4\}}$. We obtain
\begin{eqnarray*}
&&\left(\int_{0}^{T}  \int_{\Omega_\epsilon}\left | G_K^\prime \left(c^\eta_{i,\epsilon}(t,x)\right) \nabla c^\eta_{i,\epsilon}(t,x) \right| ^2 \,dx \,dt \right)^{\frac{1}{2}} \\
&& \leq C \left(\int_{\{c^\eta_{i,\epsilon} \leq 2K+4\}} \left |  \nabla c^\eta_{i,\epsilon} \right| ^2 \,d(t,x) \right)^{\frac{1}{2}} \\
&& \leq C \sqrt{2K+5} \left(\bigintsss_{\{c^\eta_{i,\epsilon} \leq 2K+4\}} \frac{ \left | \nabla c^\eta_{i,\epsilon} \right| ^2}{c^\eta_{i,\epsilon} +1} \,d(t,x) \right)^{\frac{1}{2}} \\
&& \leq C \sqrt{2K+5}. 
\end{eqnarray*}
(iii) Now, we prove the third statement. Let $\upsilon \in V_\epsilon$. Using (i) and (\ref{Exist_1}), we have for a.e. $t$:
\begin{eqnarray*}
&&\left |	\left\langle \partial_{t} G_K\left(c^\eta_{i,\epsilon}(t)\right), \upsilon \right \rangle_{V_\epsilon^\prime, V_\epsilon} \right |= \left | \left\langle \partial_{t} c^\eta_{i,\epsilon}(t),G_K^\prime \left(c^\eta_{i,\epsilon}(t)\right) \upsilon \right  \rangle_{H^1(\Omega_\epsilon)^\prime, H^1(\Omega_\epsilon)} \right | \\
&& \leq \int_{\Omega_\epsilon} \left | D_i \nabla c^\eta_{i,\epsilon} \nabla \left (G^\prime_K \left(c^\eta_{i,\epsilon}\right)\upsilon\right) \right | \,dx +  \int_{\Omega_\epsilon} \left | D_i \eta p \left(c^\eta_{i,\epsilon}\right)^{p-1}  \nabla c^\eta_{i,\epsilon} \nabla \left (G^\prime_K \left(c^\eta_{i,\epsilon}\right)\upsilon\right) \right | \,dx \\
&& +\int_{\Omega_\epsilon} \left | D_i  z_ic^\eta_{i,\epsilon} \nabla \phi_\epsilon^\eta \nabla \left (G^\prime_K \left(c^\eta_{i,\epsilon}\right)\upsilon\right) \right | \,dx \\
&& \leq  \int_{\Omega_\epsilon} \left | D_i \left | \nabla c^\eta_{i,\epsilon}  \right | ^2 G^{\prime \prime}_K \left(c^\eta_{i,\epsilon}\right)\upsilon \right | \,dx + \int_{\Omega_\epsilon} \left | D_i G^{\prime}_K \left(c^\eta_{i,\epsilon}\right) \nabla c^\eta_{i,\epsilon}   \nabla \upsilon \right | \,dx \\
&& +  \int_{\Omega_\epsilon} \left | D_i \eta p \left(c^\eta_{i,\epsilon}\right)^{p-1} \left | \nabla c^\eta_{i,\epsilon}  \right | ^2  G^{\prime \prime}_K \left(c^\eta_{i,\epsilon}\right)\upsilon \right | \,dx \\
&&+  \int_{\Omega_\epsilon} \left | D_i \eta p \left(c^\eta_{i,\epsilon}\right)^{p-1}     G^\prime_K \left(c^\eta_{i,\epsilon}\right)\nabla c^\eta_{i,\epsilon} \nabla \upsilon \right | \,dx\\
&& +  \int_{\Omega_\epsilon} \left | D_iz_i c^\eta_{i,\epsilon}\nabla \phi_\epsilon^\eta  \nabla c^\eta_{i,\epsilon}  G^{\prime \prime}_K \left(c^\eta_{i,\epsilon}\right)\upsilon \right | \,dx +  \int_{\Omega_\epsilon} \left | D_i z_i c^\eta_{i,\epsilon}     G^\prime_K \left(c^\eta_{i,\epsilon}\right)\nabla \phi^\eta_{\epsilon} \nabla \upsilon \right | \,dx.\\
\end{eqnarray*}
Let  $\{c^\eta_{i,\epsilon}(t) \leq 2K+4\}$ denote the set $\{x\in \Omega_\epsilon: c^\eta_{i,\epsilon} (t,x) \leq 2K+4\}$. Now, utilizing the facts that $D_i, G_K^\prime, G_K^{\prime \prime}$ are bounded functions and  $G_K (r)$ vanishes if $r >2K+4$, we proceed with the estimate: 
\begin{eqnarray}\label{functional_expl_1}
	&&\left |	\left\langle \partial_{t} G_K\left(c^\eta_{i,\epsilon}(t)\right), \upsilon \right \rangle_{V_\epsilon^\prime, V_\epsilon} \right | \nonumber\\
	&&\leq C\int_{\{c^\eta_{i,\epsilon}(t) \leq 2K+4\}}  \left | \nabla c^\eta_{i,\epsilon}  \right | ^2 |\upsilon | \,dx + C\int_{\{c^\eta_{i,\epsilon}(t) \leq 2K+4\}} \left |  \nabla c^\eta_{i,\epsilon} \right |  \left |   \nabla \upsilon \right | \,dx \nonumber\\
	&& + C\int_{\{c^\eta_{i,\epsilon}(t) \leq 2K+4\}} \left(c^\eta_{i,\epsilon}\right)^{p-1} \left | \nabla c^\eta_{i,\epsilon}  \right | ^2   | \upsilon  | \,dx +  C\int_{\{c^\eta_{i,\epsilon}(t) \leq 2K+4\}}   \left(c^\eta_{i,\epsilon}\right)^{p-1}     \left |\nabla c^\eta_{i,\epsilon}\right | |\nabla \upsilon  | \,dx\nonumber\\ 
	&& +C \int_{\{c^\eta_{i,\epsilon}(t) \leq 2K+4\}} c^\eta_{i,\epsilon} \left |  \nabla \phi_\epsilon^\eta \right |   \left |   \nabla c^\eta_{i,\epsilon} \right | | \upsilon | \,dx +C \int_{\{c^\eta_{i,\epsilon}(t) \leq 2K+4\}} c^\eta_{i,\epsilon}   \left |    \nabla \phi^\eta_{\epsilon} \right | |\nabla \upsilon | \,dx. \nonumber\\
	&& \leq C (2K+5) \bigintsss_{\{c^\eta_{i,\epsilon}(t) \leq 2K+4\}} \frac{\left | \nabla c^\eta_{i,\epsilon}  \right | ^2}{ c^\eta_{i,\epsilon}  +1}  |\upsilon | \,dx \nonumber\\
	&& + C \sqrt{2K+5}  \bigintsss_{\{c^\eta_{i,\epsilon}(t) \leq 2K+4\}} \frac{\left | \nabla c^\eta_{i,\epsilon}  \right | }{ \sqrt{c^\eta_{i,\epsilon}  +1}} \left |   \nabla \upsilon \right | \,dx \nonumber \\
	&& +C (2K+4)^{p-1} (2K+5) \bigintsss_{\{c^\eta_{i,\epsilon}(t) \leq 2K+4\}} \frac{\left | \nabla c^\eta_{i,\epsilon}  \right | ^2}{ c^\eta_{i,\epsilon}  +1}  |\upsilon | \,dx \nonumber \\
	&& +C (2K+4)^{p-1} \sqrt{2K+5}  \bigintsss_{\{c^\eta_{i,\epsilon}(t) \leq 2K+4\}} \frac{\left | \nabla c^\eta_{i,\epsilon}  \right |}{ \sqrt{c^\eta_{i,\epsilon}  +1}}  | \nabla \upsilon | \,dx \nonumber  \\
	&&+C (2K+4) \sqrt{2K+5} \bigintsss_{\{c^\eta_{i,\epsilon}(t) \leq 2K+4\}} \left |  \nabla \phi_\epsilon^\eta \right |   \frac{\left | \nabla c^\eta_{i,\epsilon}  \right |}{ \sqrt{c^\eta_{i,\epsilon}  +1}}   | \upsilon | \,dx \nonumber \\
	&& +C (2K+4)  \bigintsss_{\{c^\eta_{i,\epsilon}(t) \leq 2K+4\}}    \left |    \nabla \phi^\eta_{\epsilon} \right | |\nabla \upsilon | \,dx.\
\end{eqnarray}
Using $\upsilon \in V_\epsilon$ and H{\"o}lder's inequality, we get that 
\begin{eqnarray}\label{th_cut_off_2}
&&\left\lVert \partial_{t} G_K\left(c^\eta_{i,\epsilon}(t)\right)\right \rVert_{V_\epsilon^\prime}\nonumber\\
&& \leq C (2K+5)  \bigintsss_{\{c^\eta_{i,\epsilon}(t) \leq 2K+4\}} \frac{\left | \nabla c^\eta_{i,\epsilon}  \right | ^2}{ c^\eta_{i,\epsilon}  +1}  \,dx +  C \sqrt{2K+5}  \left( \bigintsss_{\{c^\eta_{i,\epsilon}(t) \leq 2K+4\}} \frac{\left | \nabla c^\eta_{i,\epsilon}  \right | ^2}{ c^\eta_{i,\epsilon}  +1}   \,dx \right)^\frac{1}{2} \nonumber\\
&& +C (2K+4)^{p-1} (2K+5)  \bigintsss_{\{c^\eta_{i,\epsilon}(t) \leq 2K+4\}} \frac{\left | \nabla c^\eta_{i,\epsilon}  \right | ^2}{ c^\eta_{i,\epsilon}  +1}   \,dx \nonumber \\
&& +  C (2K+4)^{p-1} \sqrt{2K+5} \left( \bigintsss_{\{c^\eta_{i,\epsilon}(t) \leq 2K+4\}} \frac{\left | \nabla c^\eta_{i,\epsilon}  \right | ^2}{ c^\eta_{i,\epsilon}  +1}   \,dx \right)^\frac{1}{2} \nonumber\\
&&+C (2K+4) \sqrt{2K+5}\left( \bigintsss_{\{c^\eta_{i,\epsilon}(t) \leq 2K+4\}} \left | \nabla \phi_\epsilon^\eta \right | ^2 \,dx \right)^\frac{1}{2}  \left( \bigintsss_{\{c^\eta_{i,\epsilon}(t) \leq 2K+4\}} \frac{\left | \nabla c^\eta_{i,\epsilon}  \right | ^2}{ c^\eta_{i,\epsilon}  +1}   \,dx \right)^\frac{1}{2}\nonumber\\
&& +C (2K+4) \left( \bigintsss_{\{c^\eta_{i,\epsilon}(t) \leq 2K+4\}} \left | \nabla \phi_\epsilon^\eta \right | ^2 \,dx \right)^\frac{1}{2}. 
\end{eqnarray}
Indeed, we have estimated the second term in the RHS of the last inequality of (\ref{functional_expl_1}) as follows.
\begin{equation*}
\bigintsss_{\{c^\eta_{i,\epsilon}(t) \leq 2K+4\}} \frac{\left | \nabla c^\eta_{i,\epsilon}  \right | }{ \sqrt{c^\eta_{i,\epsilon}  +1}} \left |   \nabla \upsilon \right | \,dx \leq \left( \bigintsss_{\{c^\eta_{i,\epsilon}(t) \leq 2K+4\}} \frac{\left | \nabla c^\eta_{i,\epsilon}  \right | ^2}{ c^\eta_{i,\epsilon}  +1}   \,dx \right)^\frac{1}{2} \lVert  \upsilon \rVert_{V_\epsilon}. 
	\end{equation*} 
The fifth term has been estimated as 
\begin{eqnarray*}
 \bigintsss_{\{c^\eta_{i,\epsilon}(t) \leq 2K+4\}} \left |  \nabla \phi_\epsilon^\eta \right |   \frac{\left | \nabla c^\eta_{i,\epsilon}  \right |}{ \sqrt{c^\eta_{i,\epsilon}  +1}}   | \upsilon | \,dx \leq  \bigintsss_{\{c^\eta_{i,\epsilon}(t) \leq 2K+4\}} \left |  \nabla \phi_\epsilon^\eta \right |   \frac{\left | \nabla c^\eta_{i,\epsilon}  \right |}{ \sqrt{c^\eta_{i,\epsilon}  +1}}   \,dx \lVert \upsilon \rVert_{L^\infty(\Oe)}\\
 \leq \left( \bigintsss_{\{c^\eta_{i,\epsilon}(t) \leq 2K+4\}} \left | \nabla \phi_\epsilon^\eta \right | ^2 \,dx \right)^\frac{1}{2}  \left( \bigintsss_{\{c^\eta_{i,\epsilon}(t) \leq 2K+4\}} \frac{\left | \nabla c^\eta_{i,\epsilon}  \right | ^2}{ c^\eta_{i,\epsilon}  +1}   \,dx \right)^\frac{1}{2} \lVert  \upsilon \rVert_{V_\epsilon}.
	\end{eqnarray*}
The other terms in the RHS of the last inequality of (\ref{functional_expl_1}) have been treated similarly to obtain (\ref{th_cut_off_2}). Consequently, utilizing Lemma \ref{log_est_st} and Lemma \ref{uni_est_prop_st} in (\ref{th_cut_off_2}), we get $\left\lVert \partial_{t} G_K\left(c^\eta_{i,\epsilon}\right)\right \rVert_{L^1 (0,T; V_\epsilon^\prime)}\leq C (2K+4)^{p-1} (2K+5)$. 

(iv) The final statement of the theorem follows from a simple density argument similar to the proof of (i). Hence, the proof is skipped. 
\end{proof}

	\section{Convergence of solutions of microscopic PNP}\label{conv_micro}
	\subsection{Strong convergence}
The strong convergence concerning the microscopic concentrations is proved in the next theorem.  
\begin{theorem}\label{thm_st_conv_micro_1}
For a.e. $t \in (0,T)$, let $\widetilde{c_{i,\epsilon}}(t)$ denote the extension of $c_{i,\epsilon}(t)$ to $\Omega$ as defined by the extension operator (Lemma \ref{lemma_extension}). Then there exists a subsequence, still indexed by $\epsilon$, such that 
	\begin{equation*}
	 \widetilde{c_{i,\epsilon}} \rightarrow c_{i,0} \ \ \ \text{strongly in $L^1(0,T;L^r (\Omega))$, \ for all $r \in \left[2,\frac{15}{7}\right)$},
\end{equation*}	
where the limit function $c_{i,0} \in L^{\frac{5}{4}}(0,T;W^{1,\frac{5}{4}}(\Omega))$ . 
\end{theorem}
\begin{proof}
From Proposition \ref{uni_est_thm_2_st} and Lemma \ref{lemma_extension}, it follows that $\lVert\widetilde{c_{i,\epsilon}}\rVert_{L^{\frac{5}{4}}(0,T;W^{1,\frac{5}{4}}(\Omega))} \leq C$. Hence, there exists $c_{i,0}$ such that, up to a subsequence, $\widetilde{c_{i,\epsilon}}$ converges weakly in $L^{\frac{5}{4}}(0,T;W^{1,\frac{5}{4}}(\Omega))$ to $c_{i,0}$. In what follows, we will consider a subsequence of $\epsilon$ (still indexed by $\epsilon$) along which this weak convergence happens. We will see that this is the subsequence along which the strong convergence, given by the statement of this theorem, holds.  

Again, we have $W^{1,\frac{5}{4}}(\Omega) \subset \subset L^r(\Omega) \subset L^{1}(\Omega)$ for $r \in \left[2,\frac{15}{7}\right)$. Therefore, for all $\alpha >0$, there exists $C(\alpha) >0$ such that 
\begin{equation}\label{strong_conv_1}
\lVert\left(\widetilde{c_{i,\epsilon}} - c_{i,0} \right)(t)\rVert_{L^r(\Omega)} \leq \alpha \lVert\left(\widetilde{c_{i,\epsilon}} - c_{i,0} \right) (t)\rVert_{W^{1,\frac{5}{4}}(\Omega)} + C(\alpha)  \lVert\left(\widetilde{c_{i,\epsilon}} - c_{i,0}\right) (t)\rVert_{L^{1}(\Omega)} 
\end{equation}
for a.e. $t \in (0,T)$ (see \cite[Excercise 6.12]{Bre11}). Integrating in time,  we get 
\begin{eqnarray}\label{strong_conv_2}
\int_{0}^{T} \lVert\left(\widetilde{c_{i,\epsilon}} - c_{i,0} \right)(t)\rVert_{L^r(\Omega)} \,dt &&\leq \alpha \int_{0}^{T} \lVert\left(\widetilde{c_{i,\epsilon}} - c_{i,0} \right) (t)\rVert_{W^{1,\frac{5}{4}}(\Omega)} \,dt \nonumber \\
&&+ C(\alpha) \int_{0}^{T} \lVert\left(\widetilde{c_{i,\epsilon}} - c_{i,0}\right) (t)\rVert_{L^{1}(\Omega)} \,dt.
\end{eqnarray}
Since $\int_{0}^{T} \lVert\left(\widetilde{c_{i,\epsilon}} - c_{i,0} \right) (t)\rVert_{W^{1,\frac{5}{4}}(\Omega)} \,dt $ is uniformly bounded with respect to $\epsilon$, we obtain
\begin{eqnarray}\label{strong_conv_3}
	\int_{0}^{T} \lVert\left(\widetilde{c_{i,\epsilon}} - c_{i,0} \right)(t)\rVert_{L^r(\Omega)} \,dt \leq C_1\alpha + C(\alpha) \int_{0}^{T} \lVert\left(\widetilde{c_{i,\epsilon}} - c_{i,0}\right) (t)\rVert_{L^{1}(\Omega)} \,dt.
\end{eqnarray}
Since we can choose $\alpha >0$ arbitrarily small, note that, to prove $\widetilde{c_{i,\epsilon}}  \rightarrow c_{i,0}$ strongly in $L^1(0,T;L^r(\Omega))$, it is enough to prove that $\widetilde{c_{i,\epsilon}}  \rightarrow c_{i,0}$ strongly in $L^1(0,T;L^{1}(\Omega))$. 

To prove $\widetilde{c_{i,\epsilon}}  \rightarrow c_{i,0}$ strongly in $L^1(0,T;L^{1}(\Omega))$, we will use the Aubin-Lions-Simon lemma (see Lemma \ref{Aubin_Appendix}). Notice that the first assumption of Lemma \ref{Aubin_Appendix} is satisfied; i.e., $\widetilde{c_{i,\epsilon}}$ is bounded in $L^1_{\text{loc}}(0,T;W^{1,\frac{5}{4}}(\Omega))$.

Next, let us show that 
\begin{equation}\label{strong_conv_4}
\int_{0}^{T-h} \int_{\Omega} \left |  \widetilde{c_{i,\epsilon}}(t +h,x)-\widetilde{c_{i,\epsilon}}(t,x)  \right  |  \,dx \,dt \rightarrow 0 \  \text{as} \ h \to 0 \  \text{uniformly in} \ \epsilon, \text{where $h>0$.}
\end{equation}
Note that to prove (\ref{strong_conv_4}), it is enough to show that 
\begin{equation}\label{strong_conv_5}
	\int_{0}^{T-h} \int_{\Omega_\epsilon} \left |  c_{i,\epsilon}(t +h,x)-c_{i,\epsilon}(t,x)  \right  |  \,dx \,dt \rightarrow 0 \  \text{as} \ h \to 0 \  \text{uniformly in} \ \epsilon.
\end{equation}
To obtain this convergence, we use the estimates for the microscopic app-PNP. Let us choose any $\eta$ and $\epsilon$. We have 
\begin{eqnarray}\label{strong_conv_5a}
&&	\int_{0}^{T-h} \int_{\Omega_\epsilon} \left |  c_{i,\epsilon}(t +h,x)-c_{i,\epsilon}(t,x)  \right  |  \,dx \,dt \nonumber \\
&& \leq 	\int_{0}^{T-h} \int_{\Omega_\epsilon}\left |  c_{i,\epsilon}(t +h,x)-c^\eta_{i,\epsilon}(t +h,x)  \right  |  \,dx \,dt \nonumber \\
&&+ \int_{0}^{T-h} \int_{\Omega_\epsilon} \left |  c^\eta_{i,\epsilon}(t +h,x)-c^\eta_{i,\epsilon}(t,x)  \right  |  \,dx \,dt +\int_{0}^{T-h} \int_{\Omega_\epsilon} \left |  c^\eta_{i,\epsilon}(t,x)-c_{i,\epsilon}(t,x)  \right  |\,dx \,dt. \nonumber \\
\end{eqnarray}
In order to estimate the second term in the RHS of (\ref{strong_conv_5a}), we write the set $(0,T-h) \times \Omega_\epsilon$ as a union of the following four disjoint subsets.
\begin{align*}
&\{c^\eta_{i,\epsilon}(t)\leq K, c^\eta_{i,\epsilon}(t+h)\leq K\} \\ 
&:={\{(t,x)\in (0,T-h)\times \Omega_\epsilon: c^\eta_{i,\epsilon}(t,x) \leq K \ \text{and} \ c^\eta_{i,\epsilon}(t+h,x) \leq K\}},\\
&\{c^\eta_{i,\epsilon}(t)>K, c^\eta_{i,\epsilon}(t+h)> K\}\\
& :={\{(t,x)\in (0,T-h)\times \Omega_\epsilon: c^\eta_{i,\epsilon}(t,x) > K \ \text{and} \ c^\eta_{i,\epsilon}(t+h,x) > K\}}, \\
&\{c^\eta_{i,\epsilon}(t)\leq K, c^\eta_{i,\epsilon}(t+h)> K\} \\
&:={\{(t,x)\in (0,T-h)\times \Omega_\epsilon: c^\eta_{i,\epsilon}(t,x) \leq K \ \text{and} \ c^\eta_{i,\epsilon}(t+h,x) > K\}}, \\
&\{c^\eta_{i,\epsilon}(t) > K, c^\eta_{i,\epsilon}(t+h) \leq K\} \\
&:={\{(t,x)\in (0,T-h)\times \Omega_\epsilon: c^\eta_{i,\epsilon}(t,x) > K \ \text{and} \ c^\eta_{i,\epsilon}(t+h,x) \leq K\}},
\end{align*}

where $K>1$ is any fixed number. Hence, 
\begin{eqnarray}\label{strong_conv_6}
\int_{0}^{T-h} \int_{\Omega_\epsilon} \left |  c^\eta_{i,\epsilon}(t +h,x)-c^\eta_{i,\epsilon}(t,x)  \right  |  \,dx \,dt \nonumber\\
=   \underbrace{\int_{\{c^\eta_{i,\epsilon}(t)\leq K, c^\eta_{i,\epsilon}(t+h)\leq K\}} \left |  c^\eta_{i,\epsilon}(t +h,x)-c^\eta_{i,\epsilon}(t,x)  \right  |  \,d(t,x)}_\text{$I$}\nonumber\\
+  \underbrace{\int_{\{c^\eta_{i,\epsilon}(t)>K, c^\eta_{i,\epsilon}(t+h)> K\}} \left | c^\eta_{i,\epsilon}(t +h,x)-c^\eta_{i,\epsilon}(t,x)  \right  | \,d(t,x)}_\text{$II$} \nonumber \\
+ \underbrace{\int_{\{c^\eta_{i,\epsilon}(t)\leq K, c^\eta_{i,\epsilon}(t+h)> K\} } \left | c^\eta_{i,\epsilon}(t +h,x)-c^\eta_{i,\epsilon}(t,x)  \right  | \,d(t,x)}_\text{$III$} \nonumber \\
+  \underbrace{\int_{\{c^\eta_{i,\epsilon}(t) > K, c^\eta_{i,\epsilon}(t+h) \leq K\} } \left | c^\eta_{i,\epsilon}(t +h,x)-c^\eta_{i,\epsilon}(t,x)  \right  | \,d(t,x)}_\text{$IV$}. 
\end{eqnarray}

In what follows, we will estimate each terms of the RHS of (\ref{strong_conv_6}) separately. 

\textbf{Estimate of I:} Recall from Proposition \ref{th_st_cut_off} that, for $\upsilon \in V_\epsilon$, 
\begin{equation}\label{equi_cut_off_1}
	 \int_{t}^{t+h} \left\langle \partial_{s} G_K\left(c^\eta_{i,\epsilon}(s)\right), \upsilon \right \rangle_{V_\epsilon^\prime, V_\epsilon} \,ds =  \int_{\Omega_\epsilon} \left(G_K\left(c^\eta_{i,\epsilon}(t+h,x)\right) - G_K\left(c^\eta_{i,\epsilon}(t,x)\right)\right)\upsilon (x) \,dx  
\end{equation}
Choosing $\upsilon = G_K\left(c^\eta_{i,\epsilon}(t+h)\right) - G_K\left(c^\eta_{i,\epsilon}(t)\right)$ in (\ref{equi_cut_off_1}) and integrating in $t$ from $0$ to $T-h$, we get 
\begin{eqnarray}\label{equi_cut_off_2}
	&&\int_{0}^{T-h}\int_{t}^{t+h} \left\langle \partial_{s} G_K\left(c^\eta_{i,\epsilon}(s)\right), G_K\left(c^\eta_{i,\epsilon}(t+h)\right) - G_K\left(c^\eta_{i,\epsilon}(t)\right) \right \rangle_{V_\epsilon^\prime, V_\epsilon} \,ds \,dt\nonumber \\
	&&=  \int_{0}^{T-h
	}\int_{\Omega_\epsilon} \left|G_K\left(c^\eta_{i,\epsilon}(t+h,x)\right) - G_K\left(c^\eta_{i,\epsilon}(t,x)\right)\right|^2 \,dx  \,dt .
\end{eqnarray}
Next, we estimate the LHS of (\ref{equi_cut_off_2}). Thanks to Fubini, H{\"o}lder and the bounds of $\left\lVert G_K\left(c^\eta_{i,\epsilon}\right)\right \rVert_{L^2(0,T; V_\epsilon)}$, $\left\lVert \partial_{t} G_K\left(c^\eta_{i,\epsilon}\right)\right \rVert_{L^1 (0,T; V_\epsilon^\prime)}$ from Proposition \ref{th_st_cut_off}, we write:

\begin{eqnarray}\label{equi_cut_off_3}
&& \hspace{-.57cm} \int_{0}^{T-h}\int_{t}^{t+h} \left\langle \partial_{s} G_K\left(c^\eta_{i,\epsilon}(s)\right), G_K\left(c^\eta_{i,\epsilon}(t+h)\right) - G_K\left(c^\eta_{i,\epsilon}(t)\right) \right \rangle_{V_\epsilon^\prime, V_\epsilon} \,ds \,dt\nonumber \\
&& \hspace{-.57cm} =\int_{0}^{T-h}\int_{0}^{T} \rchi_{(t,t+h)}(s)\left\langle \partial_{s} G_K\left(c^\eta_{i,\epsilon}(s)\right), G_K\left(c^\eta_{i,\epsilon}(t+h)\right) - G_K\left(c^\eta_{i,\epsilon}(t)\right) \right \rangle_{V_\epsilon^\prime, V_\epsilon} \,ds \,dt\nonumber \\
&& \hspace{-.57cm} \leq\int_{0}^{T-h}\int_{0}^{T} \rchi_{(t,t+h)}(s)\left\lVert \partial_{s} G_K\left(c^\eta_{i,\epsilon}(s)\right)\right\rVert_{V_\epsilon^\prime} \left \lVert G_K\left(c^\eta_{i,\epsilon}(t+h)\right) - G_K\left(c^\eta_{i,\epsilon}(t)\right) \right \rVert_{V_\epsilon} \,ds \,dt\nonumber \\
&& \hspace{-.57cm}  = \int_{0}^{T}\left(\int_{0}^{T-h} \rchi_{(t,t+h)}(s)\left \lVert G_K\left(c^\eta_{i,\epsilon}(t+h)\right) - G_K\left(c^\eta_{i,\epsilon}(t)\right) \right \rVert_{V_\epsilon}  \,dt \right) \left\lVert \partial_{s} G_K\left(c^\eta_{i,\epsilon}(s)\right)\right\rVert_{V_\epsilon^\prime} \,ds\nonumber \\
&& \hspace{-.57cm} \leq\int_{0}^{T}\left( \int_{0}^{T-h}\left | \rchi_{(t,t+h)}(s) \right | ^2 \,dt \right)^\frac{1}{2}\left(\int_{0}^{T-h}\left \lVert G_K\left(c^\eta_{i,\epsilon}(t+h)\right) - G_K\left(c^\eta_{i,\epsilon}(t)\right) \right \rVert_{V_\epsilon} ^2 \,dt \right)^\frac{1}{2} \nonumber\\
&& \hspace{-.57cm} \times \left\lVert \partial_{s} G_K\left(c^\eta_{i,\epsilon}(s)\right)\right\rVert_{V_\epsilon^\prime} \,ds\nonumber \\
&& \hspace{-.57cm} \leq\sqrt{h}\left(\int_{0}^{T-h}\left \lVert G_K\left(c^\eta_{i,\epsilon}(t+h)\right) - G_K\left(c^\eta_{i,\epsilon}(t)\right) \right \rVert_{V_\epsilon} ^2 \,dt \right)^\frac{1}{2}\int_{0}^{T}\left\lVert \partial_{s} G_K\left(c^\eta_{i,\epsilon}(s)\right)\right\rVert_{V_\epsilon^\prime} \,ds\nonumber \\
&& \hspace{-.57cm} \leq C \sqrt{h} (2K+4)^{p-1}(2K+5)(K+1+\sqrt{2K+5}). 
\end{eqnarray}

Again, using the definition of the cut-off function $G_K$ (see (\ref{cut_off_1})), Hölder's inequality and the measure of the set $\Omega_\epsilon$ is bounded by a constant independent of $\epsilon$, we obtain that 
\begin{eqnarray}\label{equi_cut_off_4}
&&\int_{\{c^\eta_{i,\epsilon}(t)\leq K, c^\eta_{i,\epsilon}(t+h)\leq K\}} \left|c^\eta_{i,\epsilon}(t+h,x) - c^\eta_{i,\epsilon}(t,x)\right| \,d(t,x)  \nonumber\\
&&=\int_{\{c^\eta_{i,\epsilon}(t)\leq K, c^\eta_{i,\epsilon}(t+h)\leq K\}} \left|G_K\left(c^\eta_{i,\epsilon}(t+h,x)\right) - G_K\left(c^\eta_{i,\epsilon}(t,x)\right)\right| \,d(t,x) \nonumber \\
&&\leq \int_{0}^{T-h}\int_{\Omega_\epsilon} \left|G_K\left(c^\eta_{i,\epsilon}(t+h,x)\right) - G_K\left(c^\eta_{i,\epsilon}(t,x)\right)\right| \,dx \,dt \nonumber \\
&&\leq C \left( \int_{0}^{T-h}\int_{\Omega_\epsilon} \left|G_K\left(c^\eta_{i,\epsilon}(t+h,x)\right) - G_K\left(c^\eta_{i,\epsilon}(t,x)\right)\right| ^2\,dx \,dt\right)^\frac{1}{2}. 
\end{eqnarray}

Now, using (\ref{equi_cut_off_4}), (\ref{equi_cut_off_3}) in (\ref{equi_cut_off_2}), we obtain:  
\begin{eqnarray}\label{equi_cut_off_5}
&&\int_{\{c^\eta_{i,\epsilon}(t)\leq K, c^\eta_{i,\epsilon}(t+h)\leq K\}} \left|c^\eta_{i,\epsilon}(t+h,x) - c^\eta_{i,\epsilon}(t,x)\right| \,d(t,x)  \nonumber\\
&& \leq C |h|^\frac{1}{4} (2K+4)^{\frac{p-1}{2}}(2K+5)^{\frac{1}{2}}(K+1+\sqrt{2K+5} \ )^{\frac{1}{2}}. 
\end{eqnarray}
\textbf{Estimate of II:}
Thanks to Lemma \ref{log_est_st}, we obtain: 
\begin{eqnarray}\label{strong_conv_8}
&&\int_{\{c^\eta_{i,\epsilon}(t)>K, c^\eta_{i,\epsilon}(t+h)> K\}} \left |  c^\eta_{i,\epsilon}(t +h,x)-c^\eta_{i,\epsilon}(t,x)  \right  |  \,d(t,x) \nonumber \\
&& \leq \int_{\{c^\eta_{i,\epsilon}(t)>K, c^\eta_{i,\epsilon}(t+h)> K\}}  c^\eta_{i,\epsilon}(t +h,x) \,d(t,x) + \int_{\{c^\eta_{i,\epsilon}(t)>K, c^\eta_{i,\epsilon}(t+h)> K\}} c^\eta_{i,\epsilon}(t ,x) \,d(t,x) \nonumber \\
 &&\leq \frac{1}{\log K} \int_{\{c^\eta_{i,\epsilon}(t)>K, c^\eta_{i,\epsilon}(t+h)> K\}} c^\eta_{i,\epsilon}(t +h,x) \log c^\eta_{i,\epsilon}(t +h,x) \,d(t,x) \nonumber \\ 
 &&+ \frac{1}{\log K} \int_{\{c^\eta_{i,\epsilon}(t)>K, c^\eta_{i,\epsilon}(t+h)> K\}} c^\eta_{i,\epsilon}(t,x) \log c^\eta_{i,\epsilon}(t,x) \,d(t,x)\nonumber \\
 && \leq \frac{C}{\log K}. 
\end{eqnarray}
\textbf{Estimates of III \& IV:} Next, we estimate III. The estimate for IV will follow similarly. 
Since $0 \leq {c^\eta_{i,\epsilon}} \leq K$ and $c^\eta_{i,\epsilon}(t+h,x)> K$ on the set $\{c^\eta_{i,\epsilon}(t) \leq K, c^\eta_{i,\epsilon}(t+h)> K\}$, we have 
\begin{eqnarray}\label{strong_conv_13}
&&\int_{\{c^\eta_{i,\epsilon}(t)\leq K, c^\eta_{i,\epsilon}(t+h)> K\}} \left |  c^\eta_{i,\epsilon}(t +h,x)-c^\eta_{i,\epsilon}(t,x)  \right  |  \,d(t,x) \nonumber\\
&& \leq  \int_{\{c^\eta_{i,\epsilon}(t)\leq K, c^\eta_{i,\epsilon}(t+h)> K\}}   c^\eta_{i,\epsilon}(t +h,x) \,d(t,x) \nonumber\\
&& \leq \frac{1}{\log K} \int_{\{c^\eta_{i,\epsilon}(t)\leq K, c^\eta_{i,\epsilon}(t+h)> K\}}   c^\eta_{i,\epsilon}(t +h,x) \log  c^\eta_{i,\epsilon}(t +h,x)  \,d(t,x). 
\end{eqnarray}
Again, by Lemma \ref{log_est_st}, we get 
\begin{eqnarray}\label{strong_conv_14}
&&\int_{\{c^\eta_{i,\epsilon}(t)\leq K, c^\eta_{i,\epsilon}(t+h)> K\}} \left |  c^\eta_{i,\epsilon}(t +h,x)-c^\eta_{i,\epsilon}(t,x)  \right  |  \,d(t,x) \leq \frac{C}{\log K}. 
\end{eqnarray}
Similarly, for the term IV, we obtain
\begin{eqnarray}\label{strong_conv_15}
	&&\int_{\{c^\eta_{i,\epsilon}(t) > K, c^\eta_{i,\epsilon}(t+h)\leq K\} } \left | c^\eta_{i,\epsilon}(t +h,x)-c^\eta_{i,\epsilon}(t,x)  \right  | \,d(t,x) \leq  \frac{C}{\log K}. 
\end{eqnarray}
\textbf{Convergence (\ref{strong_conv_5}):} Now that we have the required estimates, we are ready to prove the uniform convergence given in (\ref{strong_conv_5}).

Using the estimates for I, II, III and IV in (\ref{strong_conv_5a}), we get: 
\begin{eqnarray}\label{strong_conv_15_a}
&&\int_{0}^{T-h} \int_{\Omega_\epsilon} \left |  c_{i,\epsilon}(t +h,x)-c_{i,\epsilon}(t,x)  \right  |  \,dx \,dt \nonumber\\
&& \leq \int_{0}^{T-h} \int_{\Omega_\epsilon} \left |  c_{i,\epsilon}(t +h,x)-c^\eta_{i,\epsilon}(t +h,x)  \right  |  \,dx \,dt    \nonumber\\
&& + \int_{0}^{T-h} \int_{\Omega_\epsilon} \left |  c^\eta_{i,\epsilon}(t ,x)-c_{i,\epsilon}(t ,x)  \right  |  \,dx \,dt + \frac{C}{\log K } + C |h|^\frac{1}{4} W (K),
\end{eqnarray}
where $W (K) = (2K+4)^{\frac{p-1}{2}}(2K+5)^{\frac{1}{2}}(K+1+\sqrt{2K+5} \ )^{\frac{1}{2}}.$ 

Since the constant $C>0$ appearing in (\ref{strong_conv_15_a}) is independent of $\eta$ and $\epsilon$, we note that (\ref{strong_conv_15_a}) holds for all $\eta$ and $\epsilon$. 

Let $\delta >0$ be arbitrary. Fix any $\epsilon >0$. 

Recall that, up to a subsequence, $ c^\eta_{i,\epsilon}$ converges to $ c_{i,\epsilon}$ in $L^1((0,T) \times \Omega_\epsilon)$ as $\eta \to 0$.  So, in (\ref{strong_conv_15_a}), we can choose an $\eta$, which depends on the $\epsilon$, such that 
\begin{equation}\label{strong_conv_16} 
\int_{0}^{T} \int_{\Omega_\epsilon}  \left | c^\eta_{i,\epsilon}(t ,x)-c_{i,\epsilon}(t,x)  \right  | \,dx  \,dt < \frac{\delta}{6}. 
\end{equation}
Again, note that if $K> \exp{\left(\frac{3C}{\delta}\right)}$, then one gets: 
\begin{equation}\label{strong_conv_17}
\frac{C}{\log K} < \frac{\delta}{3}. 
\end{equation}
Therefore, we choose $K = \exp{\left(\frac{3C}{\delta}\right)} +1 $ in (\ref{strong_conv_15_a}). 

Finally, observe that if $h>0$ be such that 
\begin{equation}\label{strong_conv_18}
h < \left(\frac{\delta}{3C W \left( \exp{\left(\frac{3C}{\delta}\right)} +1 \right)}\right)^4,
\end{equation}
then one has: 
\begin{equation}\label{strong_conv_18_a}
 C |h|^\frac{1}{4} W \left( \exp{\left(\frac{3C}{\delta}\right)} +1\right) < \frac{\delta}{3}. 
\end{equation}
Then, using (\ref{strong_conv_16}), (\ref{strong_conv_17}), (\ref{strong_conv_18}), (\ref{strong_conv_18_a}) in (\ref{strong_conv_15_a}), we get that 
\begin{eqnarray}\label{strong_conv_19}
&& \hspace{-3cm}  \int_{0}^{T-h} \int_{\Omega_\epsilon} \left |  c_{i,\epsilon}(t +h,x)-c_{i,\epsilon}(t,x)  \right  |  \,dx \,dt < \delta \nonumber \\
&& \hspace{-3cm}  \text{whenever $0<h < \left(\frac{\delta}{3C W \left( \exp{\left(\frac{3C}{\delta}\right)} +1 \right)}\right)^4$.} 
\end{eqnarray}
Since the $h$ in (\ref{strong_conv_19}) does not depend on $\epsilon$, we conclude that the convergence is uniform in $c_{i,\epsilon}$. 

Hence, (\ref{strong_conv_4}) holds and the Aubin–Lions–Simon lemma implies that there exists a function $g \in L^1((0,T) \times  \Omega)$ such that, up to a subsequence (still indexed by $\epsilon$), $\widetilde{c_{i, \epsilon}}$ converges to $g$ strongly in $L^1((0,T) \times  \Omega)$. Again, we know that $\widetilde{c_{i, \epsilon}}$ converges to $c_{i,0}$ weakly in $L^{\frac{5}{4}}(0,T;W^{1,\frac{5}{4}}(\Omega))$. We will be done, by (\ref{strong_conv_3}), if we prove that $g = c_{i,0}$. This proof follows simply by noting that
\begin{eqnarray*}
 \int_{0}^{T} \int_{\Omega} g \psi \,dx \,dt  = \lim_{\epsilon \to 0} \int_{0}^{T} \int_{\Omega} \widetilde{c_{i,\epsilon}} \psi \,dx \,dt = \int_{0}^{T} \int_{\Omega} c_{i,0} \psi \,dx \,dt \ \ \forall \psi \in L^\infty((0,T) \times \Omega),
\end{eqnarray*}
which gives $g = c_{i,0}$ a.e. in $(0,T) \times \Omega$. 
\end{proof}
\subsection{Two-scale convergence}
Two-scale convergence is a classical tool for passing to the limit in homogenization theory, which was first introduced in \cite{Ngu89} and later developed in \cite{All92}. On these papers, the convergence was discussed in the $L^2$ setting and for the time independent case. The convergence in the $L^p$ setting and for the time independent case was presented in \cite{Luk02}. Since there is no oscillation in the time variable, these methods can simply be extended to the $L^p_tL^q_x$ setting as below. 

\begin{definition}\label{def_ts}
	Let $p,p^\prime,q,q^\prime \in (1,\infty)$ such that $\frac{1}{p}+\frac{1}{p^\prime}=1$ and $\frac{1}{q}+\frac{1}{q^\prime}=1$.	A sequence of functions $u_\epsilon$ in $L^p( 0,T; L^q (\Omega))$ is said to two-scale converge to a function $u_0 \in L^p (0,T;  L^q(\Omega \times Y ))$ if for all $\psi \in L^{p^\prime} (0,T;L^{q^\prime}( \Omega ; C_{per}(\overline{Y})))$, the following holds:
	\begin{eqnarray*}
		\lim_{\epsilon \to 0} \int_{0}^{T} \int_{\Omega}  u_\epsilon (t,x) \psi \left (t,x, \frac{x}{\epsilon}\right) \,dx \,dt = \int_{0}^{T} \int_{\Omega} \int_Y u_0 (t,x,y) \psi (t,x,y) \,dy \,dx \,dt .
	\end{eqnarray*}
\end{definition}
Whenever this holds, we write: $u_\epsilon \rightarrow u_0$ in the two-scale sense--$L^p_tL^q_x$.

Next, let us mention the compactness properties of the two-scale convergence.
\begin{proposition}\label{Prop_t.s.1}
	\begin{enumerate}[label=(\alph*)]
		\item Suppose $p,q \in (1,\infty)$ and $u_\epsilon$ is a bounded sequence in $L^p( 0,T; L^q (\Omega))$. Then there exists a function $u_0 \in L^p (0,T;  L^q(\Omega \times Y ))$ such that, up to a subsequence, $u_\epsilon \rightarrow u_0$ in the two-scale sense--$L^p_tL^q_x$. 
		\item Let $p,q \in (1,\infty)$ and $u_\epsilon$ be a sequence that converges weakly in $L^p( 0,T; W^{1,q} (\Omega))$ to $u_0 \in L^p( 0,T; W^{1,q}(\Omega))$. Then $u_\epsilon \rightarrow u_0$ in the two-scale sense--$L^p_tL^q_x$. Also, there exists a function $u_1 \in L^p (0,T;L^q(\Omega ; W^{1,q}_{per} (Y)/\R))$ such that, up to a subsequence, $\nabla u_\epsilon \rightarrow \nabla  u_0 + \nabla _y  u_1$ in the two-scale sense--$L^p_tL^q_x$. 
	\end{enumerate}
\end{proposition}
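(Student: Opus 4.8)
The plan is to reduce both statements to the classical two-scale machinery of \cite{Ngu89, All92} in its $L^p$ version \cite{Luk02}, exploiting that the oscillation lives only in the space variable so that the time variable is carried along inertly as a Bochner parameter. Throughout, $p',q'\in(1,\infty)$ are the conjugate exponents, all the dual spaces in sight are the ``expected'' reflexive ones, and subsequences will be taken and relabelled without further comment.

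For part (a), I would first establish the oscillation (mean-value) estimate on admissible test functions. Fix $\psi\in E:=L^{p'}(0,T;L^{q'}(\Omega;C_{per}(\overline{Y})))$. Applying the mean-value property of periodic continuous integrands to $|\psi(t,\cdot,\cdot)|^{q'}$ gives, for a.e.\ $t$, $\int_\Omega|\psi(t,x,\tfrac{x}{\epsilon})|^{q'}\,dx\to\int_\Omega\int_Y|\psi(t,x,y)|^{q'}\,dy\,dx$, and this is dominated by $\|\psi(t)\|_{L^{q'}(\Omega;C_{per}(\overline{Y}))}^{q'}$; dominated convergence in $t$ then yields
\[
\Bigl\|\psi\bigl(\cdot,\cdot,\tfrac{\cdot}{\epsilon}\bigr)\Bigr\|_{L^{p'}(0,T;L^{q'}(\Omega))}\ \longrightarrow\ \|\psi\|_{L^{p'}(0,T;L^{q'}(\Omega\times Y))}\ \le\ \|\psi\|_{E}.
\]
In particular the bound $\|\psi(\cdot,\cdot,\tfrac{\cdot}{\epsilon})\|_{L^{p'}(0,T;L^{q'}(\Omega))}\le C\|\psi\|_E$ holds uniformly in $\epsilon$, so by Hölder's inequality the functionals $\mu_\epsilon(\psi):=\int_0^T\int_\Omega u_\epsilon(t,x)\,\psi(t,x,\tfrac{x}{\epsilon})\,dx\,dt$ are uniformly bounded on $E$. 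Since $E$ is separable (as $p',q'<\infty$, $\Omega$ bounded, $C_{per}(\overline{Y})$ separable), sequential weak-$*$ compactness yields $\mu_\epsilon\rightharpoonup^*\mu_0$ in $E'$. Passing to the limit in the uniform bound gives $|\mu_0(\psi)|\le C\,\|\psi\|_{L^{p'}(0,T;L^{q'}(\Omega\times Y))}$ for all $\psi\in E$; as $E$ is dense in $L^{p'}(0,T;L^{q'}(\Omega\times Y))$, the functional $\mu_0$ extends to the latter space, whose dual is $L^{p}(0,T;L^{q}(\Omega\times Y))$ (here $p,q\in(1,\infty)$ and $\Omega\times Y$ is $\sigma$-finite are used). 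The representing element is the desired two-scale limit $u_0$.

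For part (b), boundedness of $u_\epsilon$ and $\nabla u_\epsilon$ in $L^p(0,T;L^q(\Omega))$ and part (a) produce two-scale limits $u_\epsilon\to v_0$ and $\nabla u_\epsilon\to\xi_0$. Testing against $y$-independent functions and using that $u_\epsilon\rightharpoonup u_0$ and $\nabla u_\epsilon\rightharpoonup\nabla u_0$ weakly in $L^p(0,T;L^q(\Omega))$ gives $\int_Y v_0\,dy=u_0$ and $\int_Y\xi_0\,dy=\nabla u_0$. To show $v_0$ is $y$-independent, I would take $\psi\in C_c^\infty((0,T)\times\Omega;C^\infty_{per}(\overline{Y}))^n$ and integrate by parts in $x$ in $\int\nabla u_\epsilon\cdot\psi(\cdot/\epsilon)$, obtaining $\epsilon\int\nabla u_\epsilon\cdot\psi(\cdot/\epsilon)=-\epsilon\int u_\epsilon\,(\nabla_x\!\cdot\psi)(\cdot/\epsilon)-\int u_\epsilon\,(\nabla_y\!\cdot\psi)(\cdot/\epsilon)$; letting $\epsilon\to0$ the left side and the first term vanish, so $\int\!\!\int\!\!\int v_0\,\nabla_y\!\cdot\psi=0$, i.e.\ $\nabla_y v_0=0$ and hence $v_0=u_0$. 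Next, restricting to $\psi$ with $\nabla_y\!\cdot\psi(t,x,\cdot)=0$ kills the $\tfrac1\epsilon$-term, and passing to the limit (using $v_0=u_0$ is $y$-free, then integrating by parts in $x$) gives $\int\!\!\int\!\!\int(\xi_0-\nabla u_0)\cdot\psi=0$ for all such $\psi$. Since $\int_Y(\xi_0-\nabla u_0)\,dy=0$, the $L^q$-version of the orthogonal decomposition of periodic vector fields into solenoidal fields and gradients of $W^{1,q}_{per}(Y)/\R$, applied with measurable dependence on the parameter $(t,x)$, furnishes $u_1\in L^p(0,T;L^q(\Omega;W^{1,q}_{per}(Y)/\R))$ with $\xi_0=\nabla u_0+\nabla_y u_1$.

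The two genuinely non-formal points, and the ones I expect to be the main obstacle, are: (i) the mean-value limit in the $L^{q'}$-scale above, which is exactly what forces $\mu_0$ to be represented by an $L^q$-function rather than merely a Radon measure on $\Omega\times Y$; and (ii) the $L^q$-periodic ``solenoidal${}^\perp={}$gradients'' decomposition in the last step together with the measurable selection of $u_1(t,x,\cdot)$ — for this I would invoke the construction in the $L^p$ two-scale literature (the $L^p$-analogues in \cite{Luk02}) rather than reprove it. Everything concerning the time variable is routine, since no oscillation occurs there and the arguments are carried out pointwise in $t$ and then integrated.
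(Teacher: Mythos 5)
Your proposal is correct and follows essentially the same route as the paper, which simply cites \cite{All92}, \cite{Luk02} (and \cite{Wie23}, \cite{Gah24}) and notes that, since there is no oscillation in time, the classical $L^p$ two-scale arguments extend verbatim to the $L^p_tL^q_x$ setting. Your sketch is exactly that extension — weak-$*$ compactness of the functionals $\mu_\epsilon$ on the separable test space, identification of the limit via the mean-value property and duality, and for the gradients the standard $\epsilon$-scaled and divergence-free test function arguments together with the periodic Helmholtz-type decomposition — with the two delicate points (admissibility of the test space and the measurable $L^q$-decomposition) correctly flagged and delegated to the cited literature.
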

\begin{proof}
	We refer to \cite{All92}, \cite{Luk02}; see also \cite[Section 4]{Wie23}, \cite[Appendix B]{Gah24}. 
\end{proof}
For the next proposition, we use this notation: Let $f$ be a function defined on $\Omega_{\epsilon}$. Then $\rchi_{\Omega_\epsilon} f$ denotes the extension of $f$ to $\Omega$ by $0$. 
\begin{proposition}\label{prop_st_conv_micro_2}
There exists a subsequence (still indexed by $\epsilon$) along which the following convergence results are true for all $s \in (1, \infty)$:  
	\begin{align}
		&  \rchi_{\Omega_\epsilon}  c_{i,\epsilon} \rightarrow \rchi_{Y^f} c_{i,0}   &&\text{in the two-scale sense--$L_t^{\frac{5}{4}}L_x^\frac{5}{4}$};\label{THM_CONV_1}\\
		& \rchi_{\Omega_\epsilon} \nabla c_{i,\epsilon}\rightarrow \rchi_{Y^f}  \left (\nabla c_{i,0} + \nabla _y c_{i,1}  \right )  &&\text{in the two-scale sense--$L_t^\frac{5}{4}L_x^\frac{5}{4}$};\label{THM_CONV_2}\\
		&  \rchi_{\Omega_\epsilon} \phi_{\epsilon} \rightarrow \rchi_{Y^f} \phi_{0}   &&\text{in the two-scale sense--$L_t^sL_x^2$}; \label{THM_CONV_3}\\
		& \rchi_{\Omega_\epsilon} \nabla \phi_{\epsilon} \rightarrow \rchi_{Y^f} \left (\nabla \phi_{0} + \nabla _y \phi_{1}  \right )  &&\text{in the two-scale sense--$L_t^sL_x^2$} \label{THM_CONV_4},
			\end{align}	
where $ c_{i,0}$ is the limit function in Theorem \ref{thm_st_conv_micro_1} and $ c_{i,0} \in L^\frac{5}{4}(0,T;W^{1,\frac{5}{4}}(\Omega)) \cap L^\frac{5}{4}(0,T; L^\frac{15}{7}(\Omega))  $, $c_{i,0} \geq 0$ almost everywhere in $(0,T) \times \Omega$;\\
$c_{i,1 }\in L^\frac{5}{4}((0,T)\times \Omega; W_{per}^{1,{\frac{5}{4}}}(Y^f)/\mathbb{R})$; \\
$\phi_0 \in L^\infty(0,T;W^{1,2}(\Omega))$ and $\int_{\Omega} \phi_{0} (t,x) \,dx =0$ for a.e. $t \in (0,T)$; \\
 $\phi_1 \in L^{\infty}(0,T;L^2(\Omega; W_{per}^{1,2}(Y^f)/\mathbb{R}))$. 
\end{proposition}
\begin{proof}
The convergences for a fixed $s \in (1,\infty) $ follow from the uniform estimates for the microscopic solutions (Proposition \ref{uni_est_thm_2_st} ) and compactness properties of two-scale convergence (Proposition \ref{Prop_t.s.1}); see also \cite[Proposition 2.6.1]{Bha23} for more details. In the following, we show the additional statement of this proposition that the subsequence and the limit functions do not depend on $s$.

First, let us comment on the facts that  $\phi_0$ and the subsequence in (\ref{THM_CONV_3}) are independent of $s$. From Theorem \ref{uni_est_thm_2_st} and Lemma \ref{lemma_extension}, we have $\lVert \widetilde{\phi_{\epsilon}}\rVert_{L^\infty(0,T;H^1(\Omega))}$ is uniformly bounded w.r.t. $\epsilon$. Hence, up to a subsequence (still indexed by $\epsilon$), $\phi_{\epsilon}$ converges to a limit $\phi_0$ in $L^\infty(0,T;H^1(\Omega))$ in the weak$^\ast$ topology. Now, we consider the subsequence of $\epsilon$ along which this convergence holds and by the uniqueness of $\phi_0$, we are done. The fact that $\phi_0$ satisfies $\int_{\Omega} \phi_{0} (t,x) \,dx =0$ can be proved by noting that  $\rchi_{\Omega_\epsilon} \phi_{\epsilon}$ converges to $\rchi_{Y^f} \phi_{0}$ in the two-scale sense--$L_t^sL_x^2$ and $\int_{\Omega_\epsilon} \phi_{\epsilon} (t,x) \,dx =0$ for a.e. $t \in (0,T)$ (see Proposition \ref{ext_micro_prop_st}).   

Next, let us discuss the convergence (\ref{THM_CONV_4}). We only consider the  subsequence along which (\ref{THM_CONV_1}, (\ref{THM_CONV_2})), (\ref{THM_CONV_3}) hold. As we showed previously, this subsequence does not depend on $s$. Now, by the uniform estimate for $\nabla \phi_\epsilon$ and the compactness properties of two-scale convergence, we obtain the following:

For any $s \in (1,\infty)$, there exists a further subsequence (still indexed by $\epsilon$) and a limit function $\phi_1 \in  L^{s}(0,T;L^2(\Omega; W_{per}^{1,2}(Y^f)/\mathbb{R}))$ such that

\begin{equation}
\rchi_{\Omega_\epsilon} \nabla \phi_{\epsilon} \rightarrow \rchi_{Y^f} \left (\nabla \phi_{0} + \nabla _y \phi_{1}  \right )  \text{in the two-scale sense--$L_t^sL_x^2$} \label{THM_CONV_5}.
\end{equation}

We would like to show that the subsequence and $\phi_1$ in (\ref{THM_CONV_5}) are independent of $s$. Here, we utilize the macroscopic PDE from Theorem \ref{Thm_st_macro_model}. In fact, we prove in Theorem \ref{Thm_st_macro_model} that $\rchi_{\Omega_\epsilon} \nabla \phi_{\epsilon}$ converges to $\rchi_{Y^f} \left (\nabla \phi_{0} + \nabla _y \phi_{1}  \right )$ in the two-scale sense--$L_t^5L_x^2$, where $\phi_1$ is uniquely given by (\ref{macro_7a}). This uniqueness yields the desired independence on $s \in (1,\infty)$. Moreover, from (\ref{macro_7a}) we conclude that   $\phi_1 \in L^{\infty}(0,T;L^2(\Omega; W_{per}^{1,2}(Y^f)/\mathbb{R}))$. 
\end{proof}
\section{The homogenized problem}\label{homo_model}
\subsection{Derivation of homogenized PNP}
Now, we derive the homogenized PNP system.  For the derivation, we use the strong (Theorem \ref{thm_st_conv_micro_1}) and two-scale convergence (Proposition \ref{prop_st_conv_micro_2}) of the microscopic solutions. We show that the limit functions satisfy the homogenized system weakly, where the weak solutions are understood in the sense given in Definition \ref{def_uniq}.
\begin{theorem}\label{Thm_st_macro_model}
\begin{enumerate} [label=(\alph*)]    	    	
\item The limit functions $c_{i,0}$, $c_{i,1}$ and $\phi_0$ from Proposition \ref{prop_st_conv_micro_2} are weak solutions of the following two-scale homogenized Nernst--Planck model--homogenized Poisson's equation:

		for $i \in \{1,...,P\}$,
	\begin{flalign}\label{Macro_eq_1aa}
& \hspace{-6.cm} \partial_t c_{i,0} - \nabla_x \cdot \left[ \frac{1}{\left | Y^f \right |} \int_{Y^f} D_i \left(\nabla c_{i,0} + \nabla _y  c_{i,1}\right)  \,dy + D_i z_i c_{i,0}A_{hom} \nabla \phi_0 \right  ] =0    &&\hspace{-.5cm}\text{in $(0,T) \times \Omega$}, \nonumber\\
 &\hspace{-5cm} \left[\frac{1}{\left | Y^f \right |}\int_{Y^f} D_i \left(\nabla c_{i,0} + \nabla _y  c_{i,1}\right) \,dy + D_i z_i c_{i,0}A_{hom} \nabla \phi_0 \right  ] \cdot \nu (x) =0   &&\hspace{-.5cm}\text{on $(0,T) \times \partial \Omega$}, \nonumber\\
 \hspace{-1cm} c_{i,0}(0,x) &= c_i^0 (x)  \   \ \ \text{in $\Omega$},\nonumber\\
  \hspace{-1cm} -\nabla_{y} \cdot \left(\nabla c_{i,0} (t,x)+ \nabla _y  c_{i,1}(t,x,y)\right)  &= 0 \ \  \  \hspace{.73cm}\text{in $(0,T) \times \Omega \times Y^f$}, \nonumber\\
 \hspace{-1cm} \left(\nabla c_{i,0} (t,x)+ \nabla _y  c_{i,1}(t,x,y)\right) \cdot \nu (y)  &= 0  \ \ \  \hspace{.73cm}\text{on $(0,T) \times \Omega \times (\partial Y^f \setminus \partial Y) $}, \nonumber\\
 \hspace{-1cm} y \mapsto c_{i,1}(t,x,y) &\ \ \hspace{1.56cm}\text{is $Y$-periodic,} \nonumber \\
\end{flalign} 
			\begin{equation}\label{Macro_eq_2}
			\begin{aligned}
				&&-\nabla_x \cdot \left [A_{hom} \nabla \phi_0 (t,x) \right ]&= \sum_{i=1}^{P} z_i c_{i,0} (t,x) + \frac{1}{\left | Y^f \right | } \int_{\Gamma} \xi_1 (x,y) \,dS(y)   &&\text{in $(0,T) \times \Omega$}, \\
				&& A_{hom} \nabla \phi_0 (t,x)  \cdot \nu &= \frac{1}{\left | Y^f \right | } \xi_2 (x) &&\text{on $(0,T) \times  \partial \Omega$}.
			\end{aligned}
		\end{equation}
		Here $A_{hom}$ is an $n \times n$ matrix, which is constant and elliptic, given by
		\begin{equation}\label{Macro_eq_2a_0}
			A_{hom} e_k = \frac{1}{\left | Y^f \right | } \int_{Y^f} \left (\nabla_y w_k + e_k \right) \,dy, \ \ \text{for $1\leq k \leq n$,} 
		\end{equation}
		where $e_k$ denotes the standard basis vector of $\mathbb{R}^n$ and 
		$w_k \in H^1_{per} (Y^f)/ \mathbb{R}$ is the unique weak solution of the so-called cell problem:
		\begin{equation}\label{Macro_eq_2a}
			\begin{aligned}
				&&-\nabla_y \cdot \left ( \nabla_y w_k (y) +e_k\right) &=0  &&\text{in $Y^f$},\\
				&&\left ( \nabla_y w_k (y) +e_k\right) \cdot \nu  &=0   &&\text{on $\partial Y^f \setminus \partial Y$},\\
				&&y \mapsto w_k (y) &  &&\text{is $Y$-periodic.}
			\end{aligned}
		\end{equation}
		\item Additionally, if $c_{i,1 }\in L^{\frac{5}{4}}((0,T)\times \Omega; W_{per}^{1,2}(Y^f)/\mathbb{R}) \ \forall 1 \leq i \leq P$, then the limit functions $c_{i,0}$, $\phi_0$ from Proposition \ref{prop_st_conv_micro_2} are weak solutions to the following homogenized Nernst--Planck model:
		
		for $i \in \{1,...,P\}$,
		\begin{equation}\label{Macro_eq_1}
			\begin{aligned}
				&& \partial_t c_{i,0} - \nabla_x \cdot \left[ D_i A_{hom} \nabla c_{i,0} + D_i z_i c_{i,0}A_{hom} \nabla \phi_0 \right  ] &=0  &&\text{in $(0,T) \times \Omega$}, \\
				&& \left[ D_i A_{hom} \nabla c_{i,0} + D_i z_i c_{i,0}A_{hom} \nabla \phi_0 \right  ] \cdot \nu &=0   &&\text{on $(0,T) \times \partial \Omega$},\\
				&&c_{i,0}(0,x) &= c_i^0 (x)  &&\text{in $\Omega$}
			\end{aligned}
		\end{equation}
	and the homogenized Poisson's equation (\ref{Macro_eq_2}). 	
		
	\end{enumerate}
	\end{theorem}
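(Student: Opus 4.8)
The plan is to pass to the two-scale limit in the weak formulations \eqref{Exist_1_micro} and \eqref{weak_Poisson_PNP} of the microscopic PNP system, using oscillating test functions. For the Nernst--Planck equation I would insert $\psi_1^\epsilon(t,x) = \Phi_0(t,x) + \epsilon\,\Phi_1(t,x,x/\epsilon)$, with $\Phi_0 \in C^\infty([0,T]\times\overline{\Omega})$, $\Phi_0(T,\cdot)=0$, and $\Phi_1 \in C^\infty([0,T]\times\overline{\Omega}; C^\infty_{per}(\overline{Y}))$, $\Phi_1(T,\cdot)=0$; for the Poisson equation I would use $\psi_2^\epsilon(x) = \Psi_0(x) + \epsilon\,\Psi_1(x,x/\epsilon)$ with $\Psi_0 \in C^\infty(\overline{\Omega})$ and $\Psi_1 \in C^\infty(\overline{\Omega}; C^\infty_{per}(\overline{Y}))$. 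These restrict to admissible test functions on $\Omega_\epsilon$, and one has $\nabla\psi_1^\epsilon = \nabla_x\Phi_0 + (\nabla_y\Phi_1)(t,x,x/\epsilon) + O(\epsilon)$ uniformly, and similarly for $\psi_2^\epsilon$. The homogenized system is then read off by a by-now standard two-step procedure: keeping the macroscopic part of the test function alone yields the macroscopic balance laws (with the Neumann conditions on $\partial\Omega$ encoded by test functions that need not vanish there), while setting the macroscopic part to zero and varying the periodic corrector yields the cell problems on $Y^f$.

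\textbf{Limit in the Nernst--Planck equation.} In \eqref{Exist_1_micro} the time-derivative term and the initial term pass to the limit because $\widetilde{c_{i,\epsilon}}$ converges weakly in $L^{5/4}(0,T;W^{1,5/4}(\Omega))$ (and two-scale, by Proposition~\ref{prop_st_conv_micro_2}) and $c_i^0 \in C^2(\overline{\Omega})$, giving $-|Y^f|\int_0^T\!\!\int_\Omega c_{i,0}\,\partial_t\Phi_0 + |Y^f|\int_\Omega c_i^0\,\Phi_0(0,\cdot)$. The diffusion term converges by \eqref{THM_CONV_2} to $\int_0^T\!\!\int_\Omega\!\int_{Y^f} D_i(\nabla c_{i,0}+\nabla_y c_{i,1})\cdot(\nabla_x\Phi_0+\nabla_y\Phi_1)$, using that $D_i=D_i(t,x)$ is a smooth admissible multiplier. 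The drift term is the crux: I would argue that $\rchi_{\Omega_\epsilon}\,c_{i,\epsilon}\nabla\phi_\epsilon$ two-scale converges to $\rchi_{Y^f}\,c_{i,0}(\nabla\phi_0+\nabla_y\phi_1)$, since $c_{i,\epsilon}\to c_{i,0}$ strongly in $L^1(0,T;L^r(\Omega))$ for some $r>2$ by Theorem~\ref{thm_st_conv_micro_1} while $\rchi_{\Omega_\epsilon}\nabla\phi_\epsilon$ two-scale converges by \eqref{THM_CONV_4}; the standard product lemma (strongly convergent times two-scale convergent) applies once one checks via Hölder that the exponents are compatible and that the limit $c_{i,0}(\nabla\phi_0+\nabla_y\phi_1)$ is integrable enough (for instance $c_{i,0}\in L^{5/4}_tL^{15/7}_x$ and $\nabla\phi_0+\nabla_y\phi_1\in L^\infty_tL^2_{x,y}$ suffice to pair with the bounded test function). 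Taking $\Phi_1\equiv 0$ and dividing by $|Y^f|$ then produces the macroscopic Nernst--Planck equation, its boundary condition on $\partial\Omega$, and the initial condition in \eqref{Macro_eq_1aa}; taking $\Phi_0\equiv 0$ and varying $\Phi_1$ produces the corrector problem $-\nabla_y\cdot(\nabla c_{i,0}+\nabla_y c_{i,1})=0$ with the Neumann condition on $\partial Y^f\setminus\partial Y$ and $Y$-periodicity.

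\textbf{Limit in the Poisson equation and end of (a).} Working for a.e.\ $t$, the left-hand side of \eqref{weak_Poisson_PNP} converges by \eqref{THM_CONV_4} to $\int_\Omega\!\int_{Y^f}(\nabla\phi_0+\nabla_y\phi_1)\cdot(\nabla\Psi_0+\nabla_y\Psi_1)$; the charge-density term converges by \eqref{THM_CONV_1} to $|Y^f|\sum_i z_i\int_\Omega c_{i,0}\Psi_0$; the boundary term splits into the contribution on $\Gamma_\epsilon$, which by assumption (A3) equals $\epsilon\int_{\Gamma_\epsilon}\xi_1(x,x/\epsilon)\psi_2^\epsilon\,dS$ and converges, by the classical convergence of periodic surface measures (using $|Y|=1$ and $\xi_1\in C^2(\overline{\Omega}\times\overline{\Gamma})$), to $\int_\Omega\big(\int_\Gamma\xi_1(x,y)\,dS(y)\big)\Psi_0(x)\,dx$, and the fixed contribution on $\partial\Omega$, which tends to $\int_{\partial\Omega}\xi_2\Psi_0\,dS$. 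Taking $\Psi_0\equiv 0$ gives the cell problem for $\phi_1$, whose unique solution in $H^1_{per}(Y^f)/\R$ is $\phi_1(t,x,y)=\sum_{k=1}^n\partial_{x_k}\phi_0(t,x)\,w_k(y)$ with $w_k$ as in \eqref{Macro_eq_2a}, so that $\int_{Y^f}(\nabla\phi_0+\nabla_y\phi_1)\,dy=|Y^f|\,A_{hom}\nabla\phi_0$ by \eqref{Macro_eq_2a_0}; taking $\Psi_1\equiv 0$ and dividing by $|Y^f|$ then gives exactly \eqref{Macro_eq_2}. The same identification $\phi_1=\sum_k\partial_{x_k}\phi_0\,w_k$ lets one rewrite the drift term in \eqref{Macro_eq_1aa} as $D_i z_i c_{i,0}A_{hom}\nabla\phi_0$; the zero-mean property of $\phi_0$ is already recorded in Proposition~\ref{prop_st_conv_micro_2}. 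This settles part (a).

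\textbf{Part (b) and the main obstacle.} Under the extra hypothesis $c_{i,1}\in L^{5/4}((0,T)\times\Omega;W^{1,2}_{per}(Y^f)/\R)$, the corrector problem for $c_{i,1}$ in \eqref{Macro_eq_1aa} is the same linear cell problem as \eqref{Macro_eq_2a} with data $\nabla_x c_{i,0}(t,x)$; by linearity and uniqueness in $W^{1,2}_{per}(Y^f)/\R$ one obtains $c_{i,1}(t,x,y)=\sum_{k=1}^n\partial_{x_k}c_{i,0}(t,x)\,w_k(y)$, and it is exactly the improved integrability of $c_{i,1}$ that makes this identification legitimate. Since $D_i$ is independent of $y$, substituting into $\frac{1}{|Y^f|}\int_{Y^f}D_i(\nabla c_{i,0}+\nabla_y c_{i,1})\,dy$ and invoking \eqref{Macro_eq_2a_0} gives $D_i A_{hom}\nabla c_{i,0}$, which converts \eqref{Macro_eq_1aa} into \eqref{Macro_eq_1} while leaving the Poisson part untouched. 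The genuinely delicate step in the whole argument is the passage to the limit in the nonlinear drift term $c_{i,\epsilon}\nabla\phi_\epsilon$: this is precisely where the strong convergence of Theorem~\ref{thm_st_conv_micro_1}, rather than mere weak convergence, is indispensable, and one must keep careful track of the integrability exponents — the concentrations being controlled only in the weak $L^{5/4}_tW^{1,5/4}_x$ scale — so that the product two-scale converges and its limit remains an admissible object in the two-scale weak formulation.
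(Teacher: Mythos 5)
Your proposal is correct and follows essentially the same route as the paper: oscillating test functions $\psi_0+\epsilon\psi_1(\cdot,\cdot/\epsilon)$, passage to the limit in the drift term by combining the strong $L^1_tL^r_x$ convergence of $c_{i,\epsilon}$ with the two-scale convergence of $\nabla\phi_\epsilon$ (the paper carries out your ``product lemma'' by the explicit splitting $A^1_\epsilon+A^2_\epsilon$), identification of $\phi_1$ via the cell problem to obtain $A_{hom}$, and the extra $W^{1,2}_{per}$-regularity of $c_{i,1}$ to justify $c_{i,1}=\sum_k\partial_{x_k}c_{i,0}\,w_k$ in part (b). No substantive differences to report.
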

\begin{definition}\label{def_uniq}
Let us give the precise definitions of the weak solutions mentioned in Theorem \ref{Thm_st_macro_model}. For the definitions below, let us first define the regularity properties of the weak solutions as follows.

Let $1 \leq i \leq P$, $c_{i,0} \in L^\frac{5}{4}(0,T;W^{1,\frac{5}{4}}(\Omega))$, $c_{i,0} \geq 0$ a.e. in $(0,T) \times \Omega$ and $c_{i,1 }\in L^\frac{5}{4}((0,T)\times \Omega; W_{per}^{1,{\frac{5}{4}}}(Y^f)/\mathbb{R})$. $\phi_0 \in L^\infty(0,T;W^{1,2}(\Omega))$ and $\int_{\Omega} \phi_{0} (t,x) \,dx =0$, for a.e. $t \in (0,T)$. 

\begin{enumerate}[label=(\alph*)]
\item We say the functions $c_{i,0}, c_{i,1}, \phi_0$ are weak solutions of the two-scaled homogenized Nernst--Planck model (\ref{Macro_eq_1aa})--homogenized Poisson's equation (\ref{Macro_eq_2}) if 
\begin{eqnarray}\label{macro_8c}
	&&\hspace{-10mm} -\int_0^T \int_{\Omega} c_{i,0} \partial_t \psi_0 \,dx \,dt + \frac{1}{\left | Y^f \right |} \int_0^T \int_{\Omega} \int_{Y^f} D_i \left(\nabla c_{i,0} + \nabla _y  c_{i,1}\right) \left (\nabla \psi_0  + \nabla _y \psi_1  \right )  \,dy \,dx \,dt \nonumber \\
	&& \hspace{-10mm} + \int_0^T \int_{\Omega}   D_i z_i  c_{i,0} A_{hom} \nabla \phi_0 \nabla \psi_0   \,dx \,dt   = \int_{\Omega} c^0_i \psi_0(0,x) \,dx, 
\end{eqnarray} 
$\left( i \in \{1,...,P\}\right)$ for all $\psi_0 (t,x) \in C^\infty([0,T] \times \overline{\Omega})$ with $\psi_0(T,.)=0$, $\psi_1 \in L^5((0,T) \times \Omega ; W^{1,5}_{per}(Y^f)/ \R )$ and
\begin{eqnarray}\label{macro_8a0}
	&&\int_{0}^{T} \int_{\Omega} A_{hom} \nabla \phi_0 \nabla \psi_0 \,dx \,dt =  \int_{0}^{T} \int_{\Omega} \sum_{i=1}^{P} z_i c_{i,0} \psi_0 \,dx \,dt \nonumber\\
	&&+\frac{1}{|Y^f|}\int_{0}^{T} \int_{\Omega} \int_{\Gamma} \xi_1 (x,y) \psi_0 (t,x) \,dS(y) \,dx \,dt \nonumber\\
	&& + \frac{1}{|Y^f|} \int_{0}^{T} \int_{\partial \Omega} \xi_2 (x) \psi_0 (t,x) \,dS(x) \,dt   
\end{eqnarray}
for all $\psi_2 \in L^5(0,T; W^{1,2}  (\Omega))$.

Note that choosing $\psi_1 =0$ in (\ref{macro_8c}), we obtain a weak formulation of the first three equations of (\ref{Macro_eq_1aa}) satisfied by $c_{i,0}, c_{i,1}, \phi_0$. Whereas choosing $\psi_0 =0$ in (\ref{macro_8c}), we observe that $c_{i,0}, c_{i,1}$ satisfy the last three equations of (\ref{Macro_eq_1aa}) in a weak sense. 
\item $w_k \in H^1_{per} (Y^f)/ \mathbb{R}$ is said to be the weak solution of the cell problem (\ref{Macro_eq_2a}) if 
	\begin{equation}\label{macro_7b}
		\int_{Y^f} \left (\nabla_y w_k(y) + e_k \right) \nabla_{y} \upsilon (y)\,dy =0
\end{equation}
for all $\upsilon \in W^{1,2}_{per}(Y^f)/\R$.
\item We say that $ c_{i,0}$, $\phi_0$ are weak solutions of the homogenized Nernst--Planck model (\ref{Macro_eq_1})--homogenized Poisson's equation (\ref{Macro_eq_2}) if they satisfy the weak formulation of the homogenized Poisson's equation (\ref{macro_8a0}) and the following weak formulation of the Nernst--Planck model:
\begin{eqnarray}\label{macro_11}
	&&-\int_0^T \int_{\Omega} c_{i,0} \partial_t \psi_0 \,dx \,dt + \int_0^T \int_{\Omega}  D_i A_{hom} \nabla c_{i,0} \nabla \psi_0   \,dx \,dt \nonumber \\
	&& +\int_0^T \int_{\Omega}   D_i z_i  c_{i,0} A_{hom} \nabla \phi_0 \nabla \psi_0   \,dx \,dt   = \int_{\Omega} c^0_i \psi_0(0,x) \,dx,
\end{eqnarray} 
$\left( i \in \{1,...,P\}\right)$ for all $\psi_0 (t,x) \in C^\infty([0,T] \times \overline{\Omega})$ with $\psi_0(T,.)=0$.
\end{enumerate}
\end{definition}
	\begin{proof}
Let $\psi_0 (t,x) \in C^\infty([0,T] \times \overline{\Omega})$ with $\psi_0(T,.)=0$ and $\psi_1(t,x,y) \in C^\infty_0((0,T) \times \Omega \times \overline{Y})$, which is $Y$-periodic in $y$. Let us consider a subsequence of $\epsilon$, still denoted by $\epsilon$, along which the convergence results given in Theorem \ref{thm_st_conv_micro_1}, (\ref{THM_CONV_1}), (\ref{THM_CONV_2}), (\ref{THM_CONV_3}) hold and (\ref{THM_CONV_5}) holds for $s=5$. Now considering $\psi_{\epsilon}(t,x):= \psi_0(t,x) + \epsilon \psi_1 \left (t, x, \frac{x}{\epsilon}\right)$ as a test function in (\ref{Exist_1_micro}), we get
\begin{eqnarray}\label{macro_1}
	&&-\int_0^T \int_{\Omega_\epsilon} c_{i, \epsilon} \left [ \partial_t \psi_0 (t,x) + \epsilon \partial_t \psi_1 \left (t, x , \frac{x}{\epsilon}\right) \right ] \,dx \,dt \nonumber \\
	&&+  \int_{0}^{T} \int_{\Omega_\epsilon}  D_i \nabla c_{i, \epsilon} \left[ \nabla \psi_0 (t,x) + \epsilon \nabla_x \psi_1 \left(t,x,\frac{x}{\epsilon}\right) + \nabla_y \psi_1 \left(t,x,\frac{x}{\epsilon}\right) \right] \,dx \,dt \nonumber \\
	&&+  \int_{0}^{T} \int_{\Omega_\epsilon} D_i z_i c_{i, \epsilon} \nabla \phi_\epsilon \left[ \nabla \psi_0 (t,x) + \epsilon \nabla_x \psi_1 \left(t,x,\frac{x}{\epsilon}\right) + \nabla_y \psi_1 \left(t,x,\frac{x}{\epsilon}\right) \right] \,dx \,dt \nonumber \\
	&& = \int_{\Omega_\epsilon} c^0_i (x) \psi_0 (0,x)  \,dx.
\end{eqnarray}
Let us pass to the limit in the third term (which is the nonlinear drift term) in the left-hand side of (\ref{macro_1}). Passing to the limit in other terms will follow from simpler arguments. We have
\begin{align*}
\int_0^T \int_{\Omega_{\epsilon}}  D_i z_i c_{i,\epsilon} \nabla \phi_{\epsilon} \nabla \psi_{\epsilon} \,dx \,dt=& \int_0^T \int_{\Omega_{\epsilon}}   D_i z_i \left(c_{i,\epsilon} - c_{i,0} \right)\nabla \phi_{\epsilon} \nabla \psi_{\epsilon} \,dx  \,dt
	\\
	&+ \int_0^T \int_{\Omega_{\epsilon}}  D_i z_i c_{i,0} \nabla \phi_{\epsilon} \nabla \psi_{\epsilon} \,dx \,dt =: A^1_\epsilon +A_\epsilon^2. 
\end{align*}
Thanks to the essential boundedness of $D_i$ and $\nabla \psi_\epsilon$, bound of $\nabla \phi_\epsilon$ in $L^\infty (0,T; L^2(\Omega_{\epsilon}))$ from Proposition \ref{uni_est_thm_2_st}, strong convergence of $\widetilde{c_{i,\epsilon}}$ in $L^1(0,T;L^r (\Omega))$, $r \in \left[2, \frac{15}{7}\right)$, from Theorem \ref{thm_st_conv_micro_1}, we obtain:
\begin{equation*}
	|A_\epsilon^1| \leq C \left\lVert \widetilde{c_{i,\epsilon}} - c_{i,0} \right \rVert_{L^1(0,T;L^2(\Omega))} \rightarrow 0 \ \ \text{as} \ \epsilon \to 0. 
\end{equation*}
For the term $A_{\epsilon}^2$ we notice that $D_i z_i c_{i,0} \nabla \psi_{\epsilon} \in L^\frac{5}{4}(0,T;L^{\frac{15}{7}}(\Omega; C_{per}(\overline{Y})))^n$ is an admissible test function for the two-scale convergence. Hence, we can use the two-scale convergence result for $\chi_{\Omega_{\epsilon}} \nabla \phi_{\epsilon}$ in $L^{5}_t L^2_x$ from (\ref{THM_CONV_5}) to obtain 
\begin{align*}
	\lim_{\epsilon \to 0 } A_{\epsilon}^2 = \int_0^T \int_{\Omega} \int_{Y^f} D_i  z_i c_{i,0} \left(\nabla \phi_{0} + \nabla_y \phi_{1}\right) \left(\nabla \psi_0 (t,x)+ \nabla_y \psi_1 (t,x,y)\right) \,dy \,dx \,dt.
\end{align*}
Similarly, we can pass to the limit in (\ref{macro_1}) to obtain 
\begin{eqnarray}\label{macro_2}
	&&-\left | Y^f \right | \int_0^T \int_{\Omega} c_{i,0} \partial_t \psi_0 \,dx \,dt \nonumber\\
	&&+  \int_0^T \int_{\Omega} \int_{Y^f} D_i \left(\nabla c_{i,0} + \nabla _y  c_{i,1}\right) \left (\nabla \psi_0  + \nabla _y \psi_1  \right )  \,dy \,dx \,dt \nonumber \\
	&& +\int_0^T \int_{\Omega} \int_{Y^f}   D_i z_i  c_{i,0} \left(\nabla \phi_0 + \nabla_y \phi_1 \right) \left(\nabla \psi_0 + \nabla_y \psi_1 \right)  \,dy \,dx \,dt  \nonumber \\
	&& = \left | Y^f \right |  \int_{\Omega} c^0_i \psi_0(0,x) \,dx.
\end{eqnarray} 
Next, we test the weak formulation of the Poisson's equation (\ref{weak_Poisson_PNP}) with $\psi_{\epsilon}$ and, by similar arguments, we pass to the limit to get that 
\begin{eqnarray}\label{macro_3}
	&&\int_0^T \int_{\Omega} \int_{Y^f} \left ( \nabla \phi_0 + \nabla_y \phi_1 \right) \left( \nabla \psi_0 + \nabla _y \psi_1\right) \,dy \,dx \,dt\nonumber \\
	&& = \left | Y^f \right | \int_{0}^{T} \int_{\Omega} \sum_{i=1}^{P} z_i c_{i,0} \psi_0 \,dx \,dt + \int_{0}^{T} \int_{\Omega} \int_{\Gamma} \xi_1 (x,y) \psi_0 (t,x) \,dS(y) \,dx \,dt  \nonumber \\
	&&  + \int_{0}^{T} \int_{\partial \Omega} \xi_2 (x) \psi_0 (t,x) \,dS(x) \,dt.  
\end{eqnarray}
Note that, by a density argument, (\ref{macro_3}) holds for all $\psi_0 \in L^5(0,T; W^{1,2}(\Omega))$, $\psi_1 \in L^{\frac{5}{4}}(0,T; L^2 ( \Omega; W^{1,2}_{per}(Y^f)/ \R))$. 

Choosing $\psi_0=0$ in (\ref{macro_3}), we get
\begin{equation}\label{macro_7}
	\int_{0}^{T} \int_\Omega \int_{Y^f} \nabla_{y} \phi_1 \nabla_{y}\psi_1 \,dy \,dx \,dt = - \int_0^T \int_{\Omega} \int_{Y^f} \nabla  \phi_0 \nabla_{y}  \psi_1 \,dy \,dx \,dt. 
\end{equation}
Next, we claim that for the limit function $\phi_0$, there exists a unique $\phi_1 \in L^{1}(0,T;L^2(\Omega; W^{1,2}_{per}(Y^f)/\R))$ satisfying (\ref{macro_7}). If possible, suppose that there exist two functions $\phi_1^1, \phi_1^2 \in  L^1(0,T;L^2(\Omega; W^{1,2}_{per}(Y^f)/\R))$ satisfying (\ref{macro_7}). Then, for a.e. $(t,x) \in (0,T) \times \Omega$, we have 
\begin{equation}\label{macro_7_1}
\int_{Y^f}\nabla_{y} \left(\phi_1^1(t,x,y)- \phi_1^2(t,x,y)\right)\nabla_{y}\psi (y) \,dy  = 0  \ \ \  \forall \psi \in W^{1,2}_{per}(Y^f)/\R.
\end{equation}
Choosing $\psi = \phi_1^1(t,x,.)- \phi_1^2(t,x,.)$ in (\ref{macro_7_1}), the proof of our claim is complete.  
 
Now, let us show that, for the limit function $\phi_0$, the unique function $\phi_1$ that satisfies (\ref{macro_7}) is given by  
\begin{equation}\label{macro_7a}
	\phi_{1}(t,x,y) = \displaystyle \sum_{k=1}^n \frac{\partial \phi_{0}(t,x)}{\partial x_k} w_k(y),
\end{equation}
where $w_k$ is the unique weak solution of (\ref{Macro_eq_2a}); that is, for all $\upsilon \in W^{1,2}_{per}(Y^f)/\R$, $w_k$ satisfies (\ref{macro_7b}). From (\ref{macro_7b}), for any $\psi_1 \in L^{\frac{5}{4}}(0,T; L^2 ( \Omega; W^{1,2}_{per}(Y^f)/ \R))$, we have
\begin{equation}\label{macro_8}
	\int_{0}^{T} \int_{\Omega} \int_{Y^f} \sum_{k=1}^{n} \frac{\partial \phi_0}{\partial x_k} (t,x) \left (\nabla_y w_k(y) + e_k \right) \nabla_{y} \psi_1 (t,x,y) \,dy \,dx \,dt =0.
\end{equation} 
Therefore, we conclude that the unique $\phi_1$ satisfying (\ref{macro_7}) is given by (\ref{macro_7a}). Hence, for the limit functions $\phi_0$ and $c_{i,0}$, there exists a unique $\phi_1$, given by (\ref{macro_7a}), that satisfies (\ref{macro_3}). Now, considering $\psi_1=0$ in (\ref{macro_3}) and recalling the definition of the matrix $A_{hom}$ from (\ref{Macro_eq_2a_0}), we obtain that $c_{i,0}, \phi_{0}$ satisfy the weak formulation (\ref{macro_8a0}).

We remark that the ellipticity of $A_{hom}$ is standard in homogenization theory (see, e.g., \cite[Proposition 2.6]{Cio99}). 

Now, let us show that $c_{i,0}, c_{i,1}, \phi_0$ satisfy (\ref{Macro_eq_1aa}) in the weak sense. Consider the drift term in (\ref{macro_2}), which is $\int_0^T \int_{\Omega} \int_{Y^f}   D_i z_i  c_{i,0} \left(\nabla \phi_0 + \nabla_y \phi_1 \right) \left(\nabla \psi_0 + \nabla_y \psi_1 \right)  \,dy \,dx \,dt$.
Note that $\int_0^T \int_{\Omega} \int_{Y^f}   D_i z_i  c_{i,0} \left(\nabla \phi_0 + \nabla_y \phi_1 \right) \nabla \psi_0   \,dy \,dx \,dt = \left | Y^f \right | \int_0^T \int_{\Omega}   D_i z_i  c_{i,0} A_{hom} \nabla \phi_0  \nabla \psi_0    \,dx \,dt$. Again, by (\ref{macro_7}), $\int_0^T \int_{\Omega} \int_{Y^f}   D_i z_i  c_{i,0} \left(\nabla \phi_0 + \nabla_y \phi_1 \right) \nabla_y \psi_1   \,dy \,dx \,dt =0$. Therefore, the drift term becomes $\left | Y^f \right | \int_0^T \int_{\Omega}    D_i z_i  c_{i,0} A_{hom} \nabla \phi_0  \nabla \psi_0    \,dx \,dt.$ Inserting this in (\ref{macro_2}), we see that $c_{i,0}$, $c_{i,1}$, $\phi_{0}$ satisfy the weak formulation of (\ref{Macro_eq_1aa}), as given by (\ref{macro_8c}). This completes the proof of part (a) of the theorem. 

Now, let us prove (b). We claim that for given $c_{i,0}, \phi_{0}$, there exists a unique $c_{i,1}$ satisfying (\ref{macro_8c}). To prove our claim, we use the additional regularity assumption of $c_{i,1}$ with respect to $y$, namely $c_{i,1}(t,x,.) \in W^{1,2}_{per}(Y^f)/\R$. We note that without this extra regularity with respect to $y$, the uniqueness is not clear (see Remark \ref{rem_uniq_corrector} below). 

If possible, suppose there exist two functions $c^1_{i,1}$, $c^2_{i,1}$ satisfying (\ref{macro_8c}). Then we obtain from (\ref{macro_8c}): 
\begin{equation}\label{macro_9}
	\int_{Y^f}   \nabla_{y} \left(c_{i,1}^1(t,x,y)- c_{i,1}^2(t,x,y)\right
	)\nabla_{y}\psi (y) \,dy  = 0  \ \ \  \forall \psi \in W^{1,5}_{per}(Y^f)/\R,
\end{equation}
for a.e. $(t,x) \in (0,T)\times \Omega$. 

Now, taking, by density, $ \psi = c_{i,1}^1(t,x,.)- c_{i,1}^2(t,x,.)
 \in W^{1,2}_{per}(Y^f)/\R$
in (\ref{macro_9}), our claim that $c_{i,1}$ is unique in the space $L^\frac{5}{4}((0,T)\times \Omega; W_{per}^{1,2}(Y^f)/\mathbb{R}))$ is proved. Consequently, by arguments similar to (\ref{macro_7a}), (\ref{macro_8}), we can write this unique $c_{i,1}$ in terms of $c_{i,0}$:
\begin{equation}\label{macro_10}
c_{i,1}(t,x,y) = \displaystyle \sum_{k=1}^n \frac{\partial c_{i,0}(t,x)}{\partial x_k} w_k(y).
\end{equation}
Then inserting the expression for $c_{i,1}$ from (\ref{macro_10}) in (\ref{macro_8c}), recalling the definition of $A_{hom}$ from (\ref{Macro_eq_2a_0}) as well as choosing $\psi_1 =0$ in (\ref{macro_8c}), we have 
\begin{eqnarray*}
&&-\int_0^T \int_{\Omega} c_{i,0} \partial_t \psi_0 \,dx \,dt + \int_0^T \int_{\Omega}  D_i A_{hom} \nabla c_{i,0} \nabla \psi_0   \,dx \,dt \nonumber \\
&& +\int_0^T \int_{\Omega}   D_i z_i  c_{i,0} A_{hom} \nabla \phi_0 \nabla \psi_0   \,dx \,dt   = \int_{\Omega} c^0_i \psi_0(0,x) \,dx,
\end{eqnarray*} 
for all $\psi_0 (t,x) \in C^\infty([0,T] \times \overline{\Omega})$ with $\psi_0(T,.)=0$. This proves that $c_{i,0}$, $\phi_{0}$ are weak solutions of (\ref{Macro_eq_1}). 
\end{proof}
\begin{remark}\label{rem_uniq_corrector}
	When $c_{i,1 }\in L^\frac{5}{4}((0,T)\times \Omega; W_{per}^{1,{\frac{5}{4}}}(Y^f)/\mathbb{R})$, we do not have the characterization of $c_{i,1}$ in terms of $c_{i,0}$ as given by (\ref{macro_10}). This is due to the fact that the uniqueness of a function $u \in W^{1,p}_{per}(Y^f)/ \R$, for $p \in (1,2)$, satisfying
	\begin{equation*}
		\int_{Y^f} \nabla u \nabla \upsilon \,dy = 0 \ \ \forall \upsilon \in W^{1,q}_{per}(Y^f)/\R, \ \ \text{where $\frac{1}{p} + \frac{1}{q} =1$},
	\end{equation*}
	is not obvious (especially in a domain $Y^f$ which is only Lipschitz and containing a hole). 
	However, the characterization (\ref{macro_10}) holds whenever $c_{i,1 }$ has a better regularity with respect to $y$ (as shown in the part (b) of Theorem \ref{Thm_st_macro_model}). 
\end{remark}
\subsection{On uniqueness of the homogenized problem}
In the following proposition, we prove uniqueness of the homogenized model assuming the solution is regular enough. We point out that this is a strong assumption, and it is not obvious that a weak solution $(c_{1,0}, c_{2,0},..., c_{P,0}, \phi_{0})$ of the homogenized model (in the sense of Definition \ref{def_uniq} (c)) enjoys this additional regularity.
\begin{proposition}\label{macro_thm_unique}
Let $V = L^\infty ((0,T) \times \Omega) \cap L^2(0,T;H^1(\Omega))$ and $W = \{u \in L^\infty(0,T;W^{1,\infty}(\Omega)): \int_{\Omega} u (t,x) \,dx =0$ for a.e. $t \in (0,T)\}$. Suppose a weak solution $(c_{1,0}, c_{2,0},..., c_{P,0}, \phi_{0})$ of the homogenized PNP system (\ref{Macro_eq_2})-(\ref{Macro_eq_1}) (in the sense of Definition \ref{def_uniq} (c)) belongs to $V^P \times W.$ Then it is unique in the space $V^P \times W$.
\end{proposition}
\begin{proof}
Suppose there exist two solutions $\left(c^1_{1,0}, c^1_{2,0},..., c^1_{P,0}, \phi^1_{0}\right), \left(c^2_{1,0}, c^2_{2,0},..., c^2_{P,0}, \phi^2_{0}\right) \in V^P \times W$. Then, for a.e. $t \in (0,T)$, we have 
\begin{eqnarray}\label{uniq_1}
&&\left\langle \partial_{t} (c_{i,0}^1 - c_{i,0}^2), c_{i,0}^1 - c_{i,0}^2 \right \rangle_{H^1(\Omega)^\prime, H^1(\Omega)} + \int_{\Omega} D_i A_{hom} \nabla (c_{i,0}^1 - c_{i,0}^2) \nabla (c_{i,0}^1 - c_{i,0}^2) \,dx \nonumber \\
&&= - \int_{\Omega} D_i z_i c_{i,0}^1 A_{hom} \nabla \phi_{0}^1 \nabla (c_{i,0}^1 - c_{i,0}^2) \,dx +  \int_{\Omega} D_i z_i c_{i,0}^2 A_{hom} \nabla \phi_{0}^2 \nabla (c_{i,0}^1 - c_{i,0}^2) \,dx \nonumber \\
&& = - \int_{\Omega} D_i z_i (c_{i,0}^1 - c_{i,0}^2)  A_{hom} \nabla \phi_{0}^1 \nabla (c_{i,0}^1 - c_{i,0}^2) \,dx   \nonumber\\
&&- \int_{\Omega} D_i z_i c_{i,0}^2  A_{hom} \nabla (\phi_{0}^1 -  \phi_{0} ^2) \nabla (c_{i,0}^1 - c_{i,0}^2)  \,dx.  
\end{eqnarray} 
Using ellipticity of $A_{hom}$ and $0 < m < D_i (t,x) \ \forall (t,x) \in [0,T] \times \overline{\Omega}$, we note that there exists a constant $C_1 >0$ such that 
\begin{equation}\label{uniq_2}
C_1 \int_{\Omega}  \left |  \nabla (c_{i,0}^1 - c_{i,0}^2) \right  | ^2   \,dx \leq  \int_{\Omega} D_i A_{hom} \nabla (c_{i,0}^1 - c_{i,0}^2) \nabla (c_{i,0}^1 - c_{i,0}^2) \,dx . 
\end{equation}
Again, utilizing $A_{hom} \nabla \phi_0^1 \in L^\infty ((0,T) \times \Omega)^n$ and Young's inequality, we have for all $\delta >0$: 
\begin{eqnarray}\label{uniq_3}
&&\int_{\Omega} D_i z_i (c_{i,0}^1 - c_{i,0}^2)  A_{hom} \nabla \phi_{0}^1 \nabla (c_{i,0}^1 - c_{i,0}^2) \,dx \nonumber\\
&&\leq  C \delta \int_{\Omega} |\nabla (c_{i,0}^1 - c_{i,0}^2)| ^2 \,dx + C (\delta) \int_{\Omega} |z_i| ^2  |c_{i,0}^1 - c_{i,0}^2| ^2 \,dx . 
\end{eqnarray}
Again, using the estimate 
\begin{equation}\label{uniq_3a}
\lVert \nabla ( \phi_0^1 - \phi_0^2 ) \rVert_{L^2 (\Omega)} \leq C \left \lVert \sum_{i=1}^{P} z_i (c_{i,0}^1 - c_{i,0}^2)\right  \rVert_{L^2 (\Omega)}  
\end{equation}
 and Young's inequality, we get for all $\delta >0$: 
\begin{eqnarray}\label{uniq_4}
	&&\int_{\Omega} D_i z_i c_{i,0}^2  A_{hom} \nabla (\phi_{0}^1 -  \phi_{0} ^2) \nabla (c_{i,0}^1 - c_{i,0}^2)  \,dx \nonumber\\
	&&\leq  C \delta \int_{\Omega} |\nabla (c_{i,0}^1 - c_{i,0}^2)| ^2 \,dx + C (\delta) \int_{\Omega} \sum_{i=1}^{P}  |z_i| ^2  |c_{i,0}^1 - c_{i,0}^2| ^2 \,dx . 
\end{eqnarray} 
Now, we take the sum from $i = 1$ to $P$ in (\ref{uniq_1} ), use (\ref{uniq_2} ), (\ref{uniq_3} ), (\ref{uniq_4} ) and choose $\delta >0$ small enough so that the gradient terms in the RHS can be absorbed by the gradient terms in the LHS to obtain that 
\begin{eqnarray}\label{uniq_5}
\frac{d}{dt} \left (  \sum_{i=1}^{P} \left \lVert z_i (c_{i,0}^1 - c_{i,0}^2 ) \right \rVert^2_{L^2(\Omega)}  \right ) \leq C \sum_{i=1}^{P} \left \lVert z_i (c_{i,0}^1 - c_{i,0}^2 ) \right \rVert^2_{L^2(\Omega)}. 
\end{eqnarray} 
Consequently, Gr{\"o}nwall's lemma yields that $c_{i,0}^1 = c_{i,0}^2$ if $z_i \neq 0$. Note that the uniqueness of $c_{i,0}$ for $z_i = 0$ directly follows from (\ref{uniq_1}). Finally, (\ref{uniq_3a}) implies the uniqueness of $\phi_{0}$. 
\end{proof}
The following lemma states a condition that guarantees convergence of the whole sequence of microscopic solutions, instead of just a subsequence. Its proof is standard.
\begin{lemma}\label{uniq_cor}
Suppose the weak solution $(c_{1,0}, c_{2,0},..., c_{P,0}, \phi_{0})$, in the sense of Definition \ref{def_uniq} (c), of the homogenized Poisson--Nernst--Planck system (\ref{Macro_eq_2})-(\ref{Macro_eq_1}) is unique. Additionally, assume that $c_{i,1 }\in L^{\frac{5}{4}}((0,T)\times \Omega; W_{per}^{1,2}(Y^f)/\mathbb{R})$ for all $1 \leq i \leq P$. Then all the convergence results given in Theorem \ref{thm_st_conv_micro_1} and Proposition \ref{prop_st_conv_micro_2} are valid for the whole sequence.
\end{lemma}
Of course, the uniqueness of the weak solution to the homogenized model is highly non-trivial. However, if one can show that, possibly with some specific and regular enough data, the PNP system has a regularizing effect, then by Proposition \ref{macro_thm_unique} the uniqueness can be obtained. Here, by regularizing effect, we mean that any weak solution can be ultimately shown to live in the more regular space $V^P \times W$. This is not obvious and remains an open issue.
\begin{remark}
Let $\mathcal{M}_\epsilon^\eta$ denote the microscopic app-PNP (\ref{non_dim_app_PNP}), $\mathcal{M}_\epsilon^0$ the microscopic PNP (\ref{non_dim_PNP}), $\mathcal{M}_0^\eta$ the homogenized app-PNP ((4.7)-(4.8) in \cite{Bha22}) and $\mathcal{M}_0^0$ the homogenized PNP ((\ref{Macro_eq_1aa})-\ref{Macro_eq_2})). In this paper, we started from the microscopic PNP $\mathcal{M}_\epsilon^0$ and performed a homogenization to obtain the homogenized PNP $\mathcal{M}_0^0$. Mathematically, we write this as 
\begin{equation}\label{lim_consecutive}
\displaystyle \lim_{\epsilon \to 0} \mathcal{M}_\epsilon^0 = \mathcal{M}_0^0.
\end{equation}
Again, noting $\displaystyle \lim_{\eta \to 0} \mathcal{M}_\epsilon^\eta = \mathcal{M}_\epsilon^0$, the expression (\ref{lim_consecutive}) becomes $\displaystyle \lim_{\epsilon \to 0} \lim_{\eta \to 0} \mathcal{M}_\epsilon^\eta = \mathcal{M}_0^0.$ Now, it is interesting to ask if we get the same homogenized model $\mathcal{M}_0^0$ after switching the limits. Namely, the question is if $\displaystyle \lim_{\eta \to 0}  \lim_{\epsilon \to 0} \mathcal{M}_\epsilon^\eta= \displaystyle \lim_{\epsilon \to 0} \lim_{\eta \to 0}   \mathcal{M}_\epsilon^\eta = \mathcal{M}_0^0$ holds. Here, we do not attempt to answer this question rigorously. Rather, let us mention what we expect to hold, at least heuristically. Since $A_{hom}$ is a constant, elliptic matrix and we have uniform estimates both with respect to $\epsilon$ and $\eta$ (see Subsection \ref{subsec_est_appPNP}), we expect that we can pass to the limit $\eta \to 0$ in $\mathcal{M}_0^\eta$ similarly to \cite{Both14} to obtain $\displaystyle \lim_{\eta \to 0} \mathcal{M}_0^\eta =\displaystyle \lim_{\eta \to 0} \lim_{\epsilon \to 0} \mathcal{M}_\epsilon^\eta = \mathcal{\widetilde{M}}_0^0$, where $\mathcal{\widetilde{M}}_0^0$ is given by (\ref{Macro_eq_2})-(\ref{Macro_eq_1}). Clearly, the homogenized model $\mathcal{\widetilde{M}}_0^0$ is an improved and stronger version of $\mathcal{M}_0^0$. However, $\mathcal{M}_0^0$ becomes $\mathcal{\widetilde{M}}_0^0$ if $c_{i,1}$ satisfies additional regularity with respect to $y$ (as shown in Theorem \ref{Thm_st_macro_model} (b)).
\end{remark}
\begin{appendix}\label{appendix}
	\section{Appendix}
\begin{lemma}[Extension operator]\label{lemma_extension}
	Let $q \in [1, \infty)$. Then there exists a linear extension operator \,  $\widetilde{\cdot}: W^{1,q}(\Omega_\epsilon) \to W^{1,q}(\Omega)$ such that, for all $u_\epsilon \in W^{1,q}(\Omega_\epsilon)$, we have:
	\begin{equation}\label{lemm_extension}
		\lVert \widetilde{u}_\epsilon \rVert_{L^q(\Omega)} \leq C \lVert u_\epsilon \rVert_{L^q(\Omega_\epsilon)} , \ 
		\lVert \nabla \widetilde{u}_\epsilon \rVert_{L^q(\Omega)} \leq C \lVert \nabla u_\epsilon \rVert_{L^q(\Omega_\epsilon)},
	\end{equation}
	where $C >0$ is a constant independent of $\epsilon$. 
	
	Furthermore, if  $u_\epsilon \in W^{1,q}(\Omega_\epsilon) \cap L^\infty (\Omega_\epsilon)$, then $\widetilde{u}_\epsilon \in L^\infty (\Omega)$ and 
	$$
	\lVert \widetilde{u}_\epsilon \rVert _{L^\infty (\Omega)} \leq C \lVert u_\epsilon \rVert_{L^\infty(\Omega_\epsilon)},
	$$ 	where $C >0$ is a constant independent of $\epsilon$.
	
 Moreover, if $u_\epsilon \in W^{1,q}(\Omega_\epsilon)$ is a non-negative function, so is the extension $\widetilde{u}_\epsilon$. 
\end{lemma}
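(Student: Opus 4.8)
The plan is the classical two-step strategy for perforated domains (see, e.g., \cite{Cio99}): first build a single linear extension operator on the reference cell and then transport it to every microscopic cell by scaling. The whole difficulty is concentrated in the reference step: one needs one linear map $\mathcal P\colon W^{1,q}(Y^f)\to W^{1,q}(Y)$ with $\mathcal Pv=v$ on $Y^f$ that simultaneously (i) is bounded $L^q(Y^f)\to L^q(Y)$, (ii) satisfies the \emph{gradient-only} bound $\lVert\nabla\mathcal Pv\rVert_{L^q(Y)}\le C\lVert\nabla v\rVert_{L^q(Y^f)}$, (iii) is bounded $L^\infty(Y^f)\to L^\infty(Y)$, and (iv) preserves non-negativity. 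Once $\mathcal P$ is available, the uniformity in $\epsilon$ follows from scale invariance of $L^q$ and $L^\infty$ norms, and from the fact that holes are carved only out of cells lying well inside $\Omega$.

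For the reference operator I would set $\mathcal Pv:=v$ on $Y^f$ and fill the hole $Y^s$ as follows. Since $\Gamma$ is $C^3$, for a small fixed $\delta>0$ there is a bi-Lipschitz map $\sigma$ from the inner collar $C^-_\delta:=\{y\in Y^s:\operatorname{dist}(y,\Gamma)<\delta\}$ onto the outer collar $C^+_\delta:=\{y\in Y^f:\operatorname{dist}(y,\Gamma)<\delta\}$ with $\sigma|_\Gamma=\mathrm{id}$; pick a cutoff $\zeta\in C^\infty(\overline{Y^s})$ with $\zeta\equiv1$ on $C^-_{\delta/2}$ and $\zeta\equiv0$ outside $C^-_\delta$, and define on $Y^s$
\begin{equation*}
\mathcal Pv:=\zeta\,(v\circ\sigma)+(1-\zeta)\,\frac{1}{|C^+_\delta|}\int_{C^+_\delta}v\,dy .
\end{equation*}
This $\mathcal P$ is linear; its trace from the $Y^s$ side on $\Gamma$ is $(v\circ\sigma)|_\Gamma=v|_\Gamma$, matching the trace from $Y^f$, so $\mathcal Pv\in W^{1,q}(Y)$. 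Properties (i), (iii), (iv) are immediate from $v\circ\sigma\ge0$, boundedness of the Jacobian of $\sigma$, and $0\le\zeta\le1$. For (ii), write $\nabla\mathcal Pv=\zeta\,\nabla(v\circ\sigma)+\nabla\zeta\,\bigl(v\circ\sigma-\tfrac{1}{|C^+_\delta|}\int_{C^+_\delta}v\bigr)$ on $Y^s$; the first term is controlled by $\lVert\nabla v\rVert_{L^q(C^+_\delta)}$ after the change of variables $\sigma$, and the second term is controlled, again after change of variables, by $\lVert v-\tfrac{1}{|C^+_\delta|}\int_{C^+_\delta}v\rVert_{L^q(C^+_\delta)}\le C\lVert\nabla v\rVert_{L^q(C^+_\delta)}$ by the Poincaré--Wirtinger inequality on the fixed connected Lipschitz set $C^+_\delta$. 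All constants depend only on $Y^f$, $\delta$, $\sigma$, $\zeta$, $q$, not on $\epsilon$, and the argument is valid for every $q\in[1,\infty)$.

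Next I would transport $\mathcal P$ to the microstructure. Recall that holes are removed only from cells $\epsilon\overline{Y_k}$ with $k\in I_\epsilon$, so on $\Omega\setminus\bigcup_{k\in I_\epsilon}\epsilon\overline{Y_k}$ the set $\Oe$ agrees with $\Omega$ and there I set $\widetilde{u}_\epsilon:=u_\epsilon$; on each $\epsilon Y_k$ with $k\in I_\epsilon$ I rescale to the reference cell via $v_k(y):=u_\epsilon(\epsilon(y+k))$, apply $\mathcal P$, and rescale back, $\widetilde{u}_\epsilon(x):=(\mathcal Pv_k)(x/\epsilon-k)$ for $x\in\epsilon Y_k$. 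Since $\mathcal Pv_k=v_k$ on $Y^f$ and the interfaces between adjacent cells lie in the fluid phase (because $\overline{Y^s}\subset Y$ strictly), on every such interface $\widetilde{u}_\epsilon$ has the trace of $u_\epsilon$ from both sides, so the pieces glue to a function $\widetilde{u}_\epsilon\in W^{1,q}(\Omega)$ and $\widetilde{\,\cdot\,}$ is linear. Using the scaling identities $\lVert v_k\rVert_{L^q(Y^f)}^q=\epsilon^{-n}\lVert u_\epsilon\rVert_{L^q(\epsilon Y_k^f)}^q$ and $\lVert\nabla_y v_k\rVert_{L^q(Y^f)}^q=\epsilon^{q-n}\lVert\nabla u_\epsilon\rVert_{L^q(\epsilon Y_k^f)}^q$ together with properties (i)--(ii) of $\mathcal P$, and then summing over the pairwise disjoint cells $\epsilon Y_k^f\subset\Oe$, gives \eqref{lemm_extension} with $C$ independent of $\epsilon$; property (iii) is transported cell-by-cell since $L^\infty$ norms are scale invariant, and non-negativity of $\widetilde{u}_\epsilon$ follows cell-by-cell from (iv).

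The step I expect to be the real obstacle is the construction of the reference operator $\mathcal P$ with all four properties at once: the gradient-only bound (which is what makes the estimates uniform in $\epsilon$) forces the mean-value normalization, preservation of sign and of $L^\infty$ forces a monotone, local, reflection-type filling of the hole, and reconciling the two in a domain $Y^s$ whose topology is unrestricted is the delicate point. The remaining bookkeeping --- scaling exponents, disjointness of the cells, and the $W^{1,q}$ gluing across interfaces --- is routine.
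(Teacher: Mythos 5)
Your construction is correct, and it is in substance the standard Cioranescu--Saint Jean Paulin argument that the paper relies on: the paper offers no proof of its own here, only the citations \cite{Cio79} and \cite[Lemma A.3]{Bha22}, so your write-up is a self-contained version of exactly what those references supply. One point worth flagging in your favour: the textbook reference operator $\mathcal Pv=\bar v+E(v-\bar v)$ (standard Sobolev extension applied to the mean-free part) gives the gradient-only bound, which is the scale-invariant estimate needed for uniformity in $\epsilon$, but it does \emph{not} preserve non-negativity or the $L^\infty$ bound, since $E(v-\bar v)$ has no sign. Your reflection-plus-convex-combination filling of the hole is the right modification to obtain all four properties simultaneously, and your verification of each (trace matching via $\sigma|_\Gamma=\mathrm{id}$ and $\zeta\equiv1$ near $\Gamma$, Poincar\'e--Wirtinger for the $\nabla\zeta$ term, scale invariance of the $L^q$ and $L^\infty$ norms, disjointness of the perforated cells, and the fact that cell interfaces lie in the fluid phase) is sound. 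Two cosmetic caveats: if $Y^s$ (hence $\Gamma$) has several components, do the collar construction component by component, or replace the mean over $C^+_\delta$ by the mean over the connected set $Y^f$ so that Poincar\'e--Wirtinger applies directly; and the non-negativity claim should be stated for the extension restricted to functions that are non-negative, since the operator itself remains linear on all of $W^{1,q}(\Omega_\epsilon)$ — which is how you have in fact used it.
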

\begin{proof}
See \cite{Cio79}, \cite[Lemma A.3]{Bha22}. 
\end{proof}
\begin{lemma}[Poincaré inequality for porous medium]\label{lemma_mean_value}
	For all $u_\epsilon \in H^1(\Omega_\epsilon)$ with $\frac{1}{|\Omega_\epsilon|} \int_{\Omega_\epsilon} u_\epsilon \,dx =0$, we have
	\begin{equation}\label{Lemma_mean_value}
		\lVert u_\epsilon \rVert _{L^2(\Omega_\epsilon)} \leq C \lVert \nabla u_\epsilon \rVert_{L^2(\Omega_\epsilon)},
	\end{equation}
	where $C >0$ is a constant independent of $\epsilon$. 
\end{lemma}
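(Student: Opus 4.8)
The plan is to reduce the statement to the classical Poincaré--Wirtinger inequality on the fixed, bounded, connected Lipschitz domain $\Omega$, transporting everything there by the \emph{uniform} extension operator of Lemma \ref{lemma_extension}. First I would extend $u_\epsilon$ to $\widetilde{u}_\epsilon \in H^1(\Omega)$ via Lemma \ref{lemma_extension}, which gives $\lVert \widetilde{u}_\epsilon \rVert_{L^2(\Omega)} \le C \lVert u_\epsilon \rVert_{L^2(\Omega_\epsilon)}$ and, crucially, $\lVert \nabla \widetilde{u}_\epsilon \rVert_{L^2(\Omega)} \le C \lVert \nabla u_\epsilon \rVert_{L^2(\Omega_\epsilon)}$ with $C$ independent of $\epsilon$. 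Setting $m_\epsilon := \frac{1}{|\Omega|}\int_\Omega \widetilde{u}_\epsilon \, dx$, the Poincaré--Wirtinger inequality on $\Omega$ yields $\lVert \widetilde{u}_\epsilon - m_\epsilon \rVert_{L^2(\Omega)} \le C_\Omega \lVert \nabla \widetilde{u}_\epsilon \rVert_{L^2(\Omega)} \le C \lVert \nabla u_\epsilon \rVert_{L^2(\Omega_\epsilon)}$, and restricting to $\Omega_\epsilon \subset \Omega$ gives $\lVert u_\epsilon - m_\epsilon \rVert_{L^2(\Omega_\epsilon)} \le C \lVert \nabla u_\epsilon \rVert_{L^2(\Omega_\epsilon)}$.

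Next I would bound the constant $m_\epsilon$ using the hypothesis $\int_{\Omega_\epsilon} u_\epsilon \, dx = 0$. By Cauchy--Schwarz,
\[
|m_\epsilon|\,|\Omega_\epsilon| = \left| \int_{\Omega_\epsilon} (m_\epsilon - u_\epsilon) \, dx \right| \le |\Omega_\epsilon|^{1/2} \, \lVert u_\epsilon - m_\epsilon \rVert_{L^2(\Omega_\epsilon)},
\]
so that $|m_\epsilon|\,|\Omega_\epsilon|^{1/2} \le \lVert u_\epsilon - m_\epsilon \rVert_{L^2(\Omega_\epsilon)}$. Therefore
\[
\lVert u_\epsilon \rVert_{L^2(\Omega_\epsilon)} \le \lVert u_\epsilon - m_\epsilon \rVert_{L^2(\Omega_\epsilon)} + |m_\epsilon|\,|\Omega_\epsilon|^{1/2} \le 2 \, \lVert u_\epsilon - m_\epsilon \rVert_{L^2(\Omega_\epsilon)} \le C \lVert \nabla u_\epsilon \rVert_{L^2(\Omega_\epsilon)},
\]
which is the claim. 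Observe that the powers of $|\Omega_\epsilon|$ cancel, so no lower bound on $|\Omega_\epsilon|$ is needed, although $|\Omega_\epsilon| \to |\Omega|\,|Y^f| > 0$ does hold and could be invoked if one preferred.

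The only genuinely delicate point is that the resulting constant be independent of $\epsilon$, and this is inherited entirely from the $\epsilon$-uniformity of the extension operator in Lemma \ref{lemma_extension}; a direct application of Poincaré on the perforated domain $\Omega_\epsilon$ would a priori produce a constant degenerating as the perforation refines. An alternative would be a compactness/contradiction argument (assume a sequence $u_{\epsilon_k}$ of mean zero with $\lVert u_{\epsilon_k} \rVert_{L^2(\Omega_{\epsilon_k})} = 1$ and $\lVert \nabla u_{\epsilon_k} \rVert_{L^2(\Omega_{\epsilon_k})} \to 0$, extend, pass to a weak $H^1(\Omega)$-limit which must be constant, and contradict the normalization together with the mean-zero condition), but this still rests on Lemma \ref{lemma_extension} and is less transparent, so I would carry out the direct argument above.
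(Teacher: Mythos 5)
Your argument is correct and complete: the reduction via the $\epsilon$-uniform extension operator to the Poincaré--Wirtinger inequality on the fixed domain $\Omega$, followed by the Cauchy--Schwarz control of the mean $m_\epsilon$ (where, as you note, the powers of $|\Omega_\epsilon|$ cancel), is exactly the standard proof of this lemma. The paper itself only cites \cite[Lemma A.4]{Bha22} for this statement, and that reference's proof is the same extension-based argument you give.
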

\begin{proof}
See \cite[Lemma A.4]{Bha22}. 
\end{proof}
\begin{lemma}[Trace inequality for porous medium]\label{TraceInequalityLemma}
Let $q \in [1,\infty)$. Then, for all $u_{\epsilon} \in W^{1,q}(\Omega_{\epsilon})$,  it holds that
	\begin{align*}
		\epsilon\Vert u_{\epsilon} \Vert_{L^q(\Gamma_{\epsilon})}^q  \le  C\left( \Vert u_{\epsilon}\Vert_{L^q(\Omega_{\epsilon})}^q + \epsilon^q  \Vert \nabla u_{\epsilon} \Vert^q_{L^q(\Omega_{\epsilon})}\right),
	\end{align*}
	where $C >0$ is a constant independent of $\epsilon$. 
\end{lemma}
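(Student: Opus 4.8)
The plan is the classical unfolding/scaling argument: transfer the estimate to the fixed reference cell $Y^f$, where the ordinary trace theorem applies, and then rescale and sum over the perforations. First I would record the reference trace inequality. Since $\Gamma=\partial Y^s$ is of class $C^3$, the pore cell $Y^f=Y\setminus\overline{Y^s}$ is a bounded Lipschitz domain, so for every $q\in[1,\infty)$ the trace operator $W^{1,q}(Y^f)\to L^q(\Gamma)$ is bounded; that is, there is a constant $C_0=C_0(q,Y^f)$, \emph{independent of $\epsilon$}, with
\[
\int_{\Gamma}|v|^q\,dS(y)\le C_0\Big(\int_{Y^f}|v|^q\,dy+\int_{Y^f}|\nabla v|^q\,dy\Big)\qquad\text{for all }v\in W^{1,q}(Y^f).
\]

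Next I would rescale a single cell. Fix $k\in I_\epsilon$ and define $v_k(y):=u_\epsilon\bigl(\epsilon(y+k)\bigr)$ for $y\in Y^f$, so that $v_k\in W^{1,q}(Y^f)$ and $\nabla_y v_k(y)=\epsilon\,(\nabla u_\epsilon)\bigl(\epsilon(y+k)\bigr)$. The change of variables $x=\epsilon(y+k)$ scales $(n-1)$-dimensional surface measure on $\epsilon\Gamma_k$ by $\epsilon^{\,n-1}$ and $n$-dimensional volume on $\epsilon Y_k^f$ by $\epsilon^{\,n}$, which gives
\[
\int_{\epsilon\Gamma_k}|u_\epsilon|^q\,dS=\epsilon^{\,n-1}\!\int_{\Gamma}|v_k|^q\,dS,\quad
\int_{\epsilon Y_k^f}|u_\epsilon|^q\,dx=\epsilon^{\,n}\!\int_{Y^f}|v_k|^q\,dy,\quad
\int_{\epsilon Y_k^f}|\nabla u_\epsilon|^q\,dx=\epsilon^{\,n-q}\!\int_{Y^f}|\nabla v_k|^q\,dy.
\]
Inserting these three identities into the reference inequality (which then carries the factors $\epsilon^{1-n}$, $\epsilon^{-n}$, $\epsilon^{q-n}$) and multiplying through by $\epsilon^{\,n}$ yields
\[
\epsilon\int_{\epsilon\Gamma_k}|u_\epsilon|^q\,dS\le C_0\Big(\int_{\epsilon Y_k^f}|u_\epsilon|^q\,dx+\epsilon^{q}\int_{\epsilon Y_k^f}|\nabla u_\epsilon|^q\,dx\Big).
\]

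Finally I would sum over $k\in I_\epsilon$. By construction $\Gamma_\epsilon=\bigcup_{k\in I_\epsilon}\epsilon\Gamma_k$, the cells $\{\epsilon Y_k^f\}_{k\in I_\epsilon}$ are pairwise disjoint, and each $\epsilon Y_k^f$ is contained in $\Omega_\epsilon$ because $\epsilon\overline{Y_k}\subset\Omega$ for $k\in I_\epsilon$ while $\overline{Y_k^s}\subset Y_k$. Summing the last displayed inequality over $k\in I_\epsilon$ therefore produces
\[
\epsilon\int_{\Gamma_\epsilon}|u_\epsilon|^q\,dS\le C_0\Big(\int_{\Omega_\epsilon}|u_\epsilon|^q\,dx+\epsilon^{q}\int_{\Omega_\epsilon}|\nabla u_\epsilon|^q\,dx\Big),
\]
which is the assertion with $C=C_0$ independent of $\epsilon$. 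The only delicate point is the exponent bookkeeping: the surface integral carries one power of $\epsilon$ fewer than the volume integrals, which is precisely why the left-hand side is weighted by $\epsilon$ and the gradient term on the right by $\epsilon^{q}$ rather than by $1$; and one must check that the reference constant $C_0$ depends only on $q$ and $Y^f$, so that summing over the (many) perforations introduces no hidden $\epsilon$-growth. Everything else is a routine change of variables.
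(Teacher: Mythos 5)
Your proof is correct and is exactly the argument the paper has in mind: the paper skips the proof, describing it only as "decomposing the domain into cells of order $\epsilon$, change of variable in the integration and the standard trace-inequality in the reference element," which is precisely your cell-by-cell rescaling to $Y^f$ followed by summation over $k\in I_\epsilon$. The exponent bookkeeping and the $\epsilon$-independence of the reference constant $C_0$ are handled correctly.
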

\begin{proof}
The proof is based on decomposing the domain into cells of order $\epsilon$, change of variable in the integration and the standard trace-inequality in the reference element. Such arguments being standard in homogenization theory, the proof is skipped. 
\end{proof}
\begin{lemma}[Aubin-Lions-Simon lemma]\label{Aubin_Appendix}
Let $\Omega \subset \R^n$, $n \in \{2,3\}$, be a bounded Lipschitz domain. Suppose $\{f_k\} \subset L^1(0,T;L^1(\Omega))$ be a sequence of functions such that it is bounded in $L^1_{\text{loc}}(0,T;W^{1,\frac{5}{4}}(\Omega))$ and $\displaystyle \int_{0}^{T-h} \int_{\Omega} \left | f_k (t+h,x) - f_k (t,x) \right|  \,dx \,dt \to 0$ as $h \to 0 \ (h>0)$ uniformly in $k$. Then, up to a subsequence, $f_k$ converges to $f$ strongly in $L^1(0,T;L^1(\Omega))$ for some $f \in L^1(0,T;L^1(\Omega))$.
\end{lemma}
\begin{proof}
Take $X=W^{1,\frac{5}{4}}(\Omega), B = L^{1}(\Omega), p =1$ in \cite[Theorem 3]{Sim87}.
\end{proof}
\begin{remark}[Strong measurability of $\Lambda$]\label{Rem_meas}
Here, we show that $\Lambda$ (given in the proof of Proposition \ref{th_st_cut_off}, (i)) is a strongly measurable function from $(0,T)$ to $V_\epsilon^\prime$. Let $\upsilon \in V_\epsilon$. Using $G_k^\prime \left(c^\eta_{i,\epsilon}(t)\right) \upsilon$ as a test function in (\ref{Exist_1}), we note that 
\begin{equation}\label{rem_uni_1}
\langle \Lambda (t), \upsilon \rangle _{V_\epsilon^\prime, V_\epsilon} = \int_{\Oe} R (t,x) \upsilon (x) \,dx + \sum_{i=1}^{n}  \int_{\Oe} S^i (t,x) \frac{\partial \upsilon (x)}{\partial x_i} \,dx
\end{equation}
for some functions $R \in L^1(0,T; L^1 (\Oe)), \ S^ i \in L^1(0,T; L^2 (\Oe)) \ (1 \leq i \leq n)$. 

Now, there exist sequences of simple functions $\displaystyle r_k (t) =   \sum_{j=1}^{l (k)}  \rchi_{E_j} (t)  \alpha _ j$, $\displaystyle s^i_k (t) =   \sum_{j=1}^{m (k)}  \rchi_{F ^i_j} (t)  \beta^i _ j$, where $\alpha_j \in L^1 (\Oe)$, $ \beta^i _ j \in L^2 (\Oe)$, $\rchi_{E_j}, \rchi_{F ^i_j}$ are Lebesgue measurable subsets of $[0,T]$, such that $r_k (t) \to R (t)$ in $L^1(\Omega_\epsilon)$ and $s^i_k (t) \to S^i(t)$ in $L^2(\Omega_\epsilon)$, as $k \to \infty$, for a.e. $t \in (0,T)$. Again, observe that $\alpha _ j, \beta^i _ j \in V_\epsilon^\prime$ in the sense that 
\begin{equation*}
\langle  \alpha _ j, \upsilon \rangle _{V_\epsilon^\prime, V_\epsilon} =  \int_{\Oe} \alpha _ j (x) \upsilon (x) \,dx, \ \langle  \beta^i _ j , \upsilon \rangle _{V_\epsilon^\prime, V_\epsilon} =  \int_{\Oe}  \beta^i _ j  (x)  \frac{\partial \upsilon (x)}{\partial x_i} \,dx.
\end{equation*}
Consequently, by (\ref{rem_uni_1}), we get:
\begin{equation*}
\left \Vert \Lambda (t) - r_k (t) -  \sum_{i=1}^{n} s^i_k (t) \right \rVert_{V_\epsilon^\prime} \leq \lVert R (t) - r_k (t) \rVert_{L^1(\Omega_\epsilon)} + \sum_{i=1}^{n} \lVert S ^i (t) - s^i_k (t) \rVert_{L^2(\Omega_\epsilon)} \to 0, 
\end{equation*}
as $k \to \infty$, for a.e. $t \in (0,T)$.
\end{remark}
\begin{lemma}\label{appen_energy_lemma}
	Let us consider the app-PNP system with nonlinear diffusion $h_p$, $p\geq 4$, as defined in (\ref{def_h}). For simplicity, we denote the corresponding solution by $\left(c^\eta_{i,\epsilon}, \phi^\eta_\epsilon\right)$, instead of $\left(c^{\eta,p}_{i,\epsilon},\phi^{\eta,p}_{i,\epsilon}\right)$. Then the corresponding energy functional
\begin{equation}\label{appen_energy_def}
V_\epsilon^\eta (t) := \int_{\Omega_\epsilon} \left | \nabla \phi_\epsilon^\eta (t,x) \right | ^2  \,dx  + {\displaystyle \sum_{i=1}^{P}} \int_{\Omega_\epsilon} c^\eta_{i,\epsilon} \log c^\eta_{i,\epsilon} (t,x) - c^\eta_{i,\epsilon}(t,x) + 1 + \frac{\eta}{p-1}  \left ( c^\eta_{i,\epsilon}   (t,x) \right ) ^ p \,dx
\end{equation}
satisfies 
\begin{equation}\label{appen_energy_est}
V_\epsilon^\eta (t) \leq C \ \ \ \forall t \in [0,T],
\end{equation}
for some constant $C>0$ independent of $\eta$ and $\epsilon$. 

Consequently, one gets 
	\begin{equation}\label{eta_blow_up_eq}
	\left \lVert c^\eta_{i,\epsilon} \right \rVert_{L^\infty(0,T;L^p(\Omega_\epsilon))} \leq \frac{C}{\eta} (p-1), 
	\end{equation}
for some constant $C>0$ independent of $\eta$ and $\epsilon$. 
\begin{proof}
The bound (\ref{appen_energy_est}) is proved in \cite[Proposition 3.1]{Bha22}. Now, (\ref{eta_blow_up_eq}) follows from (\ref{appen_energy_est}) and noting that $r\log r -r + 1 \geq 0$ for $r \geq 0$.
\end{proof}
\end{lemma}
\begin{proof}[Proof of Lemma \ref{uni_est_prop_st}]
First, let us go back to the proof of \cite{Bha22}[Proposition 3.1]. There, the term $\frac{\left | J^\eta_{i,\epsilon} \right |^2}{D_i c^\eta_{i,\epsilon}}$ was obtained as an almost everywhere limit of $\frac{\left | J^\eta_{i,\epsilon} \right |^2}{D_i \left(c^\eta_{i,\epsilon} + \delta_m\right)}$ as $m \to \infty$. Let 
$$
A := \{(t,x)\in (0,T) \times \Omega_{\epsilon}: c^\eta_{i,\epsilon} (t,x) = 0\}. 
$$ 
Observe that $\frac{\left | J^\eta_{i,\epsilon} \right |^2}{D_i \left(c^\eta_{i,\epsilon} + \delta_m\right)}$ vanishes a.e. on $A$ \cite[Chapter 5, Problem 18. (c)]{Eva10}. So, 
	\begin{flalign*}
\lim_{m \to \infty} \frac{\left | J^\eta_{i,\epsilon} \right |^2}{D_i \left(c^\eta_{i,\epsilon} + \delta_m\right)} =
	\begin{cases}
		\frac{\left | J^\eta_{i,\epsilon} \right |^2}{D_i c^\eta_{i,\epsilon}} &\text{for a.e. $(t,x) \in (0,T) \times \Oe \setminus A$,} \\
		0  &\text{for a.e.  $(t,x) \in A$.}
	\end{cases}
\end{flalign*}
Consequently, by \cite{Bha22}[Proposition 3.1], there exists a constant $C >0$ independent of $\epsilon$ and $\eta$ such that  
\begin{equation}\label{app_lem1_000}
\scalebox{1.4}{$\displaystyle \sum_{i=1}^{P}$} \bigintsss_{(0,T) \times \Oe \setminus A} \frac{\left | D_i \nabla c^\eta_{i,\epsilon} + D_i \eta \nabla \left(c^\eta_{i,\epsilon}\right)^p+ D_i z_i c_{i, \epsilon}^\eta \nabla \phi_\epsilon^\eta \right |^2}{D_i c^\eta_{i, \epsilon}} \,d(t,x) \leq 	C. 
\end{equation}
Indeed, it is shown in the proof of Proposition 3.1 in \cite{Bha22} that
\begin{equation}\label{app_lem1_detail}
\frac{d}{dt} V_\epsilon^\eta (t) + \scalebox{1.4}{$\displaystyle \sum_{i=1}^{P}$} \bigintsss_{\Oe}\frac{\left|J^\eta_{i,\epsilon}\right|^2}{D_i c^\eta_{i,\epsilon}} \,dx = 0 \ \ \text{for a.e. $t \in (0,T)$},
\end{equation}
where $V_\epsilon^\eta(t) \leq C \ \forall t \in [0,T]$ for some constant $C>0$ independent of $\eta$ and $\epsilon$, and, as discussed before, the second term in the LHS of (\ref{app_lem1_detail}) is zero whenever $c^\eta_{i,\epsilon}$ vanishes. Now, integrating (\ref{app_lem1_detail}) with respect to time, we obtain (\ref{app_lem1_000}).

Hence, 
\begin{align}\label{app_lem1_0}
&\scalebox{1.4}{$\displaystyle \sum_{i=1}^{P}$} 	\bigintsss_{(0,T) \times \Oe \setminus A} \frac{\left | \nabla c^\eta_{i, \epsilon}\right |^2}{c^\eta_{i, \epsilon}} + \frac{\left | \eta \nabla \left(c^\eta_{i,\epsilon}\right)^p+ z_i c_{i, \epsilon}^\eta \nabla \phi_\epsilon^\eta \right |^2}{c^\eta_{i, \epsilon}} + \frac{8}{p} \eta \left | \nabla \left(c^\eta_{i, \epsilon}\right)^\frac{p}{2}\right |^2 \nonumber\\
&+ 2 z_i \nabla c^\eta_{i, \epsilon} \nabla \phi^\eta_\epsilon \ \,d(t,x)  \leq  	C. 
\end{align}
Then 
\begin{align*}
&\scalebox{1.4}{$\displaystyle \sum_{i=1}^{P}$} 	\bigintsss_{(0,T) \times \Oe \setminus A} \frac{\left | \nabla c^\eta_{i, \epsilon}\right |^2}{c^\eta_{i, \epsilon}+1} + \frac{\left | \eta \nabla \left(c^\eta_{i,\epsilon}\right)^p+ z_i c_{i, \epsilon}^\eta \nabla \phi_\epsilon^\eta \right |^2}{c^\eta_{i, \epsilon}+1} + \frac{8}{p} \eta \left | \nabla \left(c^\eta_{i, \epsilon}\right)^\frac{p}{2}\right |^2 \\
&+ 2 z_i \nabla c^\eta_{i, \epsilon} \nabla \phi^\eta_\epsilon \ \,d(t,x)  \leq  	C. 
\end{align*} 
Since $\nabla c_{i, \epsilon} = 0$ a.e. in $A$, we get 
\begin{align*}
&\scalebox{1.4}{$\displaystyle \sum_{i=1}^{P}$} \bigintsss_{0}^{T} \bigintsss_{\Oe}\frac{\left | \nabla c^\eta_{i, \epsilon}\right |^2}{c^\eta_{i, \epsilon}+1} + \frac{\left | \eta \nabla \left(c^\eta_{i,\epsilon}\right)^p+ z_i c_{i, \epsilon}^\eta \nabla \phi_\epsilon^\eta \right |^2}{c^\eta_{i, \epsilon}+1} + \frac{8}{p} \eta \left | \nabla \left(c^\eta_{i, \epsilon}\right)^\frac{p}{2}\right |^2 \\
&+ 2 z_i \nabla c^\eta_{i, \epsilon} \nabla \phi^\eta_\epsilon \ \,dx \,dt \leq  	C. 
\end{align*} 
Again, note that $\sqrt{c^\eta_{i, \epsilon} + 1} \in L^2(0,T;H^1(\Oe))$ and 
\begin{equation*}
\nabla \sqrt{c^\eta_{i, \epsilon} + 1} = \frac{1}{2 \sqrt{c^\eta _{i, \epsilon} + 1}} \nabla c^\eta_{i, \epsilon}. 
\end{equation*}
Therefore, 
\begin{align}\label{app_lem1_1}
&\scalebox{1.4}{$\displaystyle \sum_{i=1}^{P}$}  \bigintsss_{0}^{T} \bigintsss_{\Oe} 4 \left | \nabla \sqrt{c^\eta_{i, \epsilon} + 1}\right|^2   + \frac{\left | \eta \nabla \left(c^\eta_{i,\epsilon}\right)^p+ z_i c_{i, \epsilon}^\eta \nabla \phi_\epsilon^\eta \right |^2}{c^\eta_{i, \epsilon}+1} + \frac{8}{p} \eta \left | \nabla \left(c^\eta_{i, \epsilon}\right)^\frac{p}{2}\right |^2 \nonumber\\
&+ 2 z_i \nabla c^\eta_{i, \epsilon} \nabla \phi^\eta_\epsilon  \ \,dx \,dt   \leq  	C. 
\end{align} 
The first three terms of each integral are non-negative. So, let us estimate $\displaystyle \sum_{i=1}^{P} \int_{0}^{T} \int_{\Oe}z_i  \nabla c^\eta_{i, \epsilon} \nabla \phi^\eta_\epsilon \, dx \,dt$. Integrating by parts, we have 
\begin{align}\label{app_lem1_1a}
&\int_{0}^{T} \int_{\Oe} \sum_{i=1}^{P}  z_i \nabla c^\eta_{i, \epsilon} \nabla \phi^\eta_\epsilon  \ \,dx \,dt =  \int_{0}^{T} \int_{\Oe} \left | \Delta \phi_\epsilon^\eta \right | ^2 \,dx \,dt \nonumber\\
&+ \sum_{i=1}^{P}  \int_{0}^{T} \int_{\partial \Oe} z_i c^\eta_{i, \epsilon} \xi_\epsilon \,dS(x) \,dt . 
\end{align}
Therefore, it remains to have an estimate of $\displaystyle \sum_{i=1}^{P}  \int_{0}^{T} \int_{\partial \Oe} z_i c^\eta_{i, \epsilon} \xi_\epsilon \,dS(x) \,dt$. Now, 
\begin{eqnarray}\label{app_lem1_2}
  \int_{0}^{T} \int_{\partial \Oe} z_i c^\eta_{i, \epsilon} \xi_\epsilon \,dS(x) \,dt \geq -  \int_{0}^{T} \int_{\partial \Oe} |z_i| ( c^\eta_{i, \epsilon} +1 ) | \xi_\epsilon |\,dS(x) \,dt \nonumber\\
  \geq -C \epsilon   \int_{0}^{T} \int_{\Gamma_\epsilon} \left | \sqrt{c^\eta_{i, \epsilon} +1 } \right |^2  \,dS(x) \,dt - C  \int_{0}^{T} \int_{\partial \Omega}  \left | \sqrt{c^\eta_{i, \epsilon} +1 } \right |^2 \,dS(x) \,dt \nonumber \\
\end{eqnarray}
By Lemma \ref{TraceInequalityLemma}, we have 
\begin{align}\label{app_lem1_3}
&\epsilon   \int_{0}^{T} \int_{\Gamma_\epsilon} \left | \sqrt{c^\eta_{i, \epsilon} +1 } \right |^2  \,dS(x) \,dt \nonumber\\
&\leq  C  \left (  \int_{0}^{T} \int_{\Oe}  \left | \sqrt{c^\eta_{i, \epsilon} +1 } \right |^2  \,dx \,dt + \epsilon ^2  \int_{0}^{T} \int_{\Oe}  \left | \nabla \sqrt{c^\eta_{i, \epsilon} +1 } \right |^2  \,dx \,dt  \right ). 
\end{align}
Also, using the weighted trace inequality \cite[p. 63, Exercise II.4.1]{Gal11} and extension operator (Lemma \ref{lemma_extension}), we get for any $\delta >0$: 
\begin{eqnarray}\label{app_lem1_4}
	  \int_{0}^{T} \int_{\partial \Omega}  \left | \sqrt{c^\eta_{i, \epsilon} +1 } \right |^2 \,dS(x) \,dt =  	  \int_{0}^{T} \int_{\partial \Omega}  \left | \widetilde{\text{$\sqrt{c^\eta_{i, \epsilon} +1 }$}} \right |^2 \,dS(x) \,dt \nonumber \\
	  \leq \delta   \int_{0}^{T} \int_{\Omega}  \left | \nabla  \widetilde{\text{$\sqrt{c^\eta_{i, \epsilon} +1 }$}} \right |^2 \,dx \,dt  + C (\delta) \int_{0}^{T} \int_{\Omega}  \left | \widetilde{\text{$\sqrt{c^\eta_{i, \epsilon} +1 }$}} \right |^2 \,dx \,dt  \nonumber \\
	  \leq C \delta   \int_{0}^{T} \int_{\Omega_\epsilon}  \left | \nabla \sqrt{c^\eta_{i, \epsilon} +1 } \right |^2 \,dx \,dt  + C (\delta) \int_{0}^{T} \int_{\Omega_\epsilon}  \left |  \sqrt{c^\eta_{i, \epsilon} +1 }\right |^2 \,dx \,dt . 
\end{eqnarray}
Hence, using (\ref{app_lem1_1a}), (\ref{app_lem1_2}),  (\ref{app_lem1_3}),  (\ref{app_lem1_4}) in  (\ref{app_lem1_1}), we obtain
\begin{align}\label{app_lem1_5}
	&\scalebox{1.4}{$\displaystyle \sum_{i=1}^{P}$} \bigintsss_{0}^{T} \bigintsss_{\Oe} 4 \left | \nabla \sqrt{c^\eta_{i, \epsilon} + 1}\right|^2   + \frac{\left | \eta \nabla \left(c^\eta_{i,\epsilon}\right)^p+ z_i c_{i, \epsilon}^\eta \nabla \phi_\epsilon^\eta \right |^2}{c^\eta_{i, \epsilon}+1} + \frac{8}{p} \eta \left | \nabla \left(c^\eta_{i, \epsilon}\right)^\frac{p}{2}\right |^2 + \left | \Delta \phi_\epsilon^\eta \right | ^2  dx \,dt \nonumber \\
	   & \leq	C    \scalebox{1.2}{$\displaystyle \sum_{i=1}^{P}$}  \int_{0}^{T} \int_{\Oe}  \left | \sqrt{c^\eta_{i, \epsilon} +1 } \right |^2  + \epsilon ^2   \left | \nabla \sqrt{c^\eta_{i, \epsilon} +1 } \right |^2 + \delta    \left | \nabla \sqrt{c^\eta_{i, \epsilon} +1 } \right |^2  + C (\delta)   \left |  \sqrt{c^\eta_{i, \epsilon} +1 }\right |^2 \,dx \,dt. \nonumber \\
\end{align} 
We choose $\delta, \epsilon$ small enough so that $C(\epsilon ^2 + \delta)   \left | \nabla \sqrt{c^\eta_{i, \epsilon} +1 } \right |^2$ in the RHS of (\ref{app_lem1_5}) can be absorbed by $ 4 \left | \nabla \sqrt{c^\eta_{i, \epsilon} +1 } \right |^2$ in the LHS. Finally, using the conservation of mass (Lemma \ref{lemma_conv_mass}), our proof is complete. 
\end{proof}
\begin{proof}[Proof of Lemma \ref{conv_approx_micro}] Recall that $V_\epsilon := H^1 (\Oe) \cap L^\infty (\Oe)$ is a Banach space equipped with the norm $\lVert . \rVert_{V_\epsilon} : = \max \{\lVert . \rVert_{H^1(\Oe)} ,\lVert . \rVert_{ L ^\infty(\Oe)} \}$. Observe the embeddings: 
	\begin{equation}\label{appen_banach_triple}
	H^1(\Oe) \subset \subset L^2(\Oe) \subset V_\epsilon ^\prime. 
	\end{equation}
Next, we show strong convergence, up to a subsequence, of $c^\eta_{i,\epsilon}$ as $\eta \to 0$, using the above triple of Banach spaces. We note that the space $V_\epsilon ^\prime$ also played an important role in extracting a strongly convergent subsequence of $\widetilde{c_{i,\epsilon}}$ as $\epsilon \to 0$ (see Proposition \ref{th_st_cut_off} and Theorem \ref{thm_st_conv_micro_1}).  

A simple density argument yields that $\partial_{t} \sqrt{c^\eta_{i, \epsilon}+1} \in L^1(0,T;V_\epsilon^\prime)$ with
\begin{equation*}
\left \langle \partial_{t}  \sqrt{c^\eta_{i, \epsilon}+1} (t) , \upsilon \right  \rangle _{V_\epsilon^\prime, V_\epsilon}= \left \langle \partial_{t}  c^\eta_{i, \epsilon} (t), \frac{\upsilon}{2 \sqrt{c^\eta_{i, \epsilon}+1}} \right \rangle_{H^1(\Oe)^\prime, H^1(\Oe)} \ \forall \upsilon \in V_\epsilon, \ \text{for a.e. $t \in (0,T)$.} 
\end{equation*} 
Next, we claim that $\partial_{t} \sqrt{c^\eta_{i, \epsilon}+1}$ is bounded in $L^1(0,T;V_\epsilon^\prime)$ uniformly in $\eta$. For all $\upsilon \in V_\epsilon$, we have:
\begin{eqnarray*}
 \left \langle \partial_{t}  c^\eta_{i, \epsilon} (t), \frac{\upsilon}{2 \sqrt{c^\eta_{i, \epsilon}+1}} \right \rangle_{H^1(\Oe)^\prime, H^1(\Oe)} = \bigintsss_{\Oe} J^\eta_{i, \epsilon} \cdot \nabla \left(\frac{\upsilon}{2 \sqrt{c^\eta_{i, \epsilon}+1}}\right) \,dx  \\
= \bigintsss_{\Oe} J^\eta_{i, \epsilon} \cdot \left [\frac{1}{2 \sqrt{c^\eta_{i, \epsilon}+1}} \nabla \upsilon - \frac{ \upsilon}{4 \left (c^\eta_{i, \epsilon}+1 \right)^{\frac{3}{2}}} \nabla c^\eta_{i, \epsilon}\right ] \,dx \\
\leq \left \lVert \frac{J^\eta_{i,\epsilon}}{2 \sqrt{c^\eta_{i, \epsilon}+1}}\right \rVert _{L^2(\Oe)}  \left \lVert \nabla \upsilon \right \rVert _{L^2(\Oe)} +  \left \lVert \frac{J^\eta_{i,\epsilon}}{2 \sqrt{c^\eta_{i, \epsilon}+1}}\right \rVert _{L^2(\Oe)} \left \lVert \frac{\nabla c^\eta_{i,\epsilon}}{2 \sqrt{c^\eta_{i, \epsilon}+1}}\right \rVert _{L^2(\Oe)}  \left \lVert \upsilon \right \rVert _{L^\infty(\Oe)}. 
\end{eqnarray*}
Consequently,  Lemma \ref{uni_est_prop_st} and (\ref{app_lem1_000}) prove our claim. Again, by conservation of mass (Lemma \ref{lemma_conv_mass}) and Lemma \ref{uni_est_prop_st}, we have $\sqrt{c^\eta_{i, \epsilon}+1}$ is bounded in $L^2(0,T;H^1(\Oe))$. . Hence, by (\ref{appen_banach_triple}), we can apply \cite[Corollary 4]{Sim87} to conclude that, up to a subsequence, $\sqrt{c^\eta_{i, \epsilon}+1}$ strongly converges in $L^2(0,T;L^2(\Oe))$ as $\eta \to 0$. 

Now that we have the strong convergence, we continue our arguments as in \cite[Theorem 1]{Both14}. For example, we note that $\nabla c^\eta_{i, \epsilon} = 2 \sqrt{c^\eta_{i,\epsilon}+1}  \ \nabla \sqrt{c^\eta_{i,\epsilon}+1}$. Now, the weak convergence of $\nabla c^\eta_{i, \epsilon}$ to $\nabla c_{i, \epsilon}$ in $L^1((0,T) \times \Oe)$ follows from observing the facts that $\sqrt{c^\eta_{i, \epsilon}+1}$ converges strongly in $L^2((0,T) \times \Oe)$ and $\nabla \sqrt{c^\eta_{i,\epsilon}+1}$ is weakly relatively compact in $L^2((0,T) \times \Oe)$ (Lemma \ref{uni_est_prop_st}). 

Finally, we conclude by commenting on the convergence (up to a subsequence) of $J^\eta_{i, \epsilon} + D_i \nabla c^\eta_{i, \epsilon}$, as $\eta \to 0$, to $-D_i z_i c_{i, \epsilon} \nabla \phi_\epsilon$ weakly in $L^1((0,T) \times \Oe)$. Let $A$ be the set as in the proof of Lemma \ref{uni_est_prop_st}; i.e., $A := \{(t,x)\in (0,T) \times \Omega_{\epsilon}: c^\eta_{i,\epsilon} (t,x) = 0\}.$ We write
\begin{equation*}
J^\eta_{i, \epsilon} + D_i \nabla c^\eta_{i, \epsilon} = - D_i \eta \nabla \left( c^\eta_{i, \epsilon} \right) ^p - D_i z_i c^\eta_{i, \epsilon} \nabla \phi_\epsilon^\eta =  -D_i \sqrt{c^\eta_{i, \epsilon}} \  \mathcal{F}(c^\eta_{i, \epsilon}, \nabla c^\eta_{i, \epsilon}, \nabla \phi_\epsilon^\eta),
\end{equation*}
where  $\mathscr{F}(c^\eta_{i, \epsilon}, \nabla c^\eta_{i, \epsilon}, \nabla \phi_\epsilon^\eta) =  \mathscr{F}_1( c^\eta_{i, \epsilon}, \nabla c^\eta_{i, \epsilon})  +  \mathscr{F}_2(c^\eta_{i, \epsilon}, \nabla \phi_\epsilon^\eta)$ with 
	\begin{flalign*}
\mathscr{F}_1( c^\eta_{i, \epsilon}, \nabla c^\eta_{i, \epsilon}) =	 
	\begin{cases}
	 \frac{\eta \nabla \left( c^\eta_{i, \epsilon} \right) ^p  }{\sqrt{c^\eta_{i, \epsilon}}} &\text{for a.e. $(t,x) \in (0,T) \times \Oe \setminus A$,} \\  
		0  &\text{for a.e.  $(t,x) \in A$;}
	\end{cases}
\end{flalign*}
and 
	\begin{flalign*}
	\mathscr{F}_2(c^\eta_{i, \epsilon}, \ \nabla \phi_\epsilon^\eta) =	 
	\begin{cases}
		\frac{z_i c^\eta_{i, \epsilon} \nabla \phi_\epsilon^\eta}{\sqrt{c^\eta_{i, \epsilon}}} &\text{for a.e. $(t,x) \in (0,T) \times \Oe \setminus A$,} \\  
		0  &\text{for a.e.  $(t,x) \in A$.}
	\end{cases}
\end{flalign*}
We claim that $\mathscr{F}(c^\eta_{i, \epsilon}, \nabla c^\eta_{i, \epsilon}, \nabla \phi_\epsilon^\eta)$ is weakly relatively compact in $L^2((0,T)  \times \Oe)$. To see this, note that, in the proof of Lemma \ref{uni_est_prop_st}, $\sqrt{c^\eta_{i, \epsilon}+1}$ can be replaced by $\sqrt{c^\eta_{i, \epsilon}+ \rho_m}$, where $ \rho_m$ is a strictly decreasing sequence of positive numbers such that $ \rho_m \to 0$ as $m \to \infty$. So, as in (\ref{app_lem1_5}), we get 
\begin{eqnarray*}
	\scalebox{1.4}{$\displaystyle \sum_{i=1}^{P}$} \bigintsss_{0}^{T} \bigintsss_{\Oe}    \frac{\left | \eta \nabla \left(c^\eta_{i,\epsilon}\right)^p+ z_i c_{i, \epsilon}^\eta \nabla \phi_\epsilon^\eta \right |^2}{c^\eta_{i, \epsilon}+\rho_m} \,dx \,dt 
	\leq	C   . 
\end{eqnarray*} 
Consequently, by the monotone convergence theorem, we have that  $\mathscr{F}(c^\eta_{i, \epsilon}, \nabla c^\eta_{i, \epsilon}, \nabla \phi_\epsilon^\eta)$ is bounded in $L^2((0,T)  \times \Oe)$, which proves our claim. 

 Again, one has $\displaystyle \mathscr{F}_1(c^\eta_{i, \epsilon}, \nabla c^\eta_{i, \epsilon})= \frac{\eta p}{p-\frac{1}{2}} \nabla \left( c^\eta_{i, \epsilon}\right) ^{p-\frac{1}{2}}$ in $(0,T) \times \Oe$. Then following the same arguments from \cite[Theorem 1]{Both14}, we are done. 
\end{proof}
\end{appendix}
\section*{Acknowledgments}
The author is thankful to Prof. Dr. Maria Neuss-Radu for many fruitful discussions. Furthermore, the author would like to thank the anonymous reviewers for their constructive comments and suggestions, which improved the quality of the manuscript.
\section*{Funding}
Parts of this work were supported by the Kempe Foundations (JCSMK22-0153), Sweden, and the Chair of Applied Mathematics (Modeling and Numerics), FAU Erlangen-Nürnberg, Germany.

\end{document}